\newtheorem{prop}{Proposition}[section] 
\newtheorem{theorem}{Theorem}[section] 
\newtheorem{Def}{Definition}[section]
\newtheorem{lemma}{Lemma}[section]
\newtheorem{corollary}{Corollary}[section]
\newtheorem{remark}{Remark}[section]
\renewcommand{\dim}{\operatorname{dim}} 
\renewcommand{\ker}{\operatorname{Ker}}
\newcommand{\im}{\operatorname{Im}}
\newcommand{\grad}{\operatorname{grad}}
\newcommand{\supp}{\operatorname{supp}}
\renewcommand{\d}{\operatorname{d}}
\newcommand{\sectiontitle}{section}
\newcommand{\setsectiontitle}[1]{\renewcommand{\sectiontitle}{\footnotesize\textit{#1}}}
\newcommand{\partder}[2]{\frac{\partial #1}{\partial #2}}
\newcommand{\totder}[2]{\frac{d #1}{d #2}}
\renewcommand{\supp}{\operatorname{supp}}
\title{\myfont A review on coisotropic reduction in Symplectic, Cosympletic, Contact and Co-contact Hamiltonian systems} 
\author{Rubén Izquierdo-López, Manuel de León}
\date{}
\titleformat{\section}{ \large \normalfont \bfseries  \color{blue}}{\textcolor{blue}{\thesection.}}{0.5em}{}
\titleformat{\subsection}{ \normalfont \bfseries  \color{blue}}{\textcolor{blue}{\thesubsection.}}{0.5em}{}
\titleformat{\subsubsection}{ \normalfont \bfseries  \color{blue}}{\textcolor{blue}{\thesubsubsection.}}{0.5em}{}
\begin{document} 
\font \myfont=cmr12 at 25pt

\thispagestyle{toc}
\begin{center}
{\myfont A review on coisotropic reduction in Symplectic, Cosymplectic, Contact and Co-contact Hamiltonian systems}
\end{center}
\hfill

\begin{center}

Manuel de Le\'on\footnote{E-mail: \href{mailto:mdeleon@icmat.es}{mdeleon@icmat.es}}
\\ Instituto de Ciencias Matem\'aticas, Campus Cantoblanco \\
 Consejo Superior de Investigaciones Cient\'ificas
 \\
C/ Nicol\'as Cabrera, 13--15, 28049, Madrid, Spain
\\
and
\\
Real Academia Espa{\~n}ola de las Ciencias.
\\
C/ Valverde, 22, 28004 Madrid, Spain.

\bigskip

Rubén Izquierdo-López\footnote{E-mail: \href{mailto:rubizqui@ucm.es}{rubizqui@ucm.es}}
\\ Instituto de Ciencias Matem\'aticas, Campus Cantoblanco \\
 Consejo Superior de Investigaciones Cient\'ificas
 \\
C/ Nicol\'as Cabrera, 13--15, 28049, Madrid, Spain
\end{center}
\hfill


\begin{abstract} 
In this paper we study coisotropic reduction in different types of dynamics according to the geometry of the corresponding phase space. The relevance of coisotropic reduction is motivated by the fact that these dynamics can always be interpreted as Lagrangian or Legendrian submanifolds. Furthermore, Lagrangian or Legendrian submanifolds can be reduced by a coisotropic one.

\smallskip

\noindent \textbf{MSC2020 classification:} 37J39, 37J55, 70H05, 70H33.
\smallskip

\noindent  \textbf{Key words:} Coisotropic reduction; symplectic manifolds; cosymplectic manifolds; contact manifolds; cocontact manifolds; Hamiltonian dynamics.

 \end{abstract}

 \tableofcontents
 
\section{Introduction}
\setsectiontitle{INTRODUCTION}

The introduction of symplectic geometry in the study of Hamiltonian systems was a tremendous breakthrough, both in quantitative and qualitative aspects. For example, we have the results in the reduction of the original Hamiltonian system when in the presence of symmetries, or the so-called coisotropic reduction \cites{abraham2008foundations,arnold,libermann2012symplectic,de2011methods}. Another relevant example, in the quantitative aspects, is the development of geometric integrators that respect geometric aspects and prove to be more efficient than the traditional ones (see for instance \cites{sanzserna, marsdenbook}). It has also had a major influence on the study of completely integrable systems and Hamilton-Jacobi theory \cites{arnold,abraham2008foundations,catalanes,vaquero}. In addition, the so-called geometric quantization relies on symplectic geometry \cites{kostant,SouriauDynamical}.

Regarding the reduction in the presence of symmetries, the most relevant result is the so-called Marsden-Weinstein symplectic reduction theorem \cites{marsden1974reduction} (a preliminary version can be found in Meyer \cites{meyer}) using the momentum mapping, a natural extension of the classical linear and angular momentum. The reduced manifold is obtained using a regular value of the momentum mapping and the corresponding isotropy group, and the dynamics is projected to this reduced manifold, gaining for integration a smaller number of degrees of freedom. This theorem has been extended to many other contexts: cosymplectic, contact, and more general settings (see \cites{acakpo2022stable, albert1989theoreme, de1993cosymplectic, de2019contact, garcia2022momentum, Lainztesis, marsden1990reduction,willett} and the references therein). For a recent review on reduction by symmetries in cosymplectic geometry we refer to \cites{delucas2023cosymplectic}.
When the reduced space is not a manifold, we can reduce the algebra of observables \cites{Sniatycki1983ReductionAQ} (see also \cites{beppe}). This reduction recovers the Poisson algebra of the reduced space in Marsden-Weinstein reduction. 

Related to the geometric reduction is Noether's theorem (in fact, this reduction is a generalisation of it), which states that a symmetry of the system produces a conserved quantity \cites{binz}. The introduction of geometric structures has revealed itself in a plethora of results relating symmetries and conserved quantities \cites{de2011methods}.


Furthermore, Lagrangian submanifolds play a crucial role, since it is easy to check that the image of a Hamiltonian vector field $X_H$ in a symplectic manifold $(M, \omega)$ can be interpreted as a Lagrangian submanifold of the symplectic manifold $(TM, \omega^c)$, where $\omega^c$ is the complete or tangent lift of $\omega$ to the tangent bundle $TM$. This result has its equivalent in Lagrangian mechanics, and has led to the so-called Tulczyjew triples, which elegantly relate the different Lagrangian submanifolds that appear in Lagrangian and Hamiltonian descriptions of mechanics via the Legendre transformation \cites{tulczyjew1976hamiltonienne,tulczyjew1976lagrangienne,de2011methods}. This interpretation of the dynamics as a Lagrangian submanifold has been extended to other scenarios, including the Tulczyjew triple \cites{deleon2003tulczyjews,jcelisa,ogul,ogul2,grabowska,ogul2021,silvia,willett}. Lagrangian submanifolds are also relevant to develop the so-called Hamilton--Jacobi theory since they provide the geometric setting for solutions of the Hamilton--Jacobi problem (see \cites{ogulin} for a recent topical review on the subject).
In this sense, we follow Weinstein's creed: "Everything is a Lagrangian submanifold" \cites{weinsteincreed}.

Moreover, coisotropic submanifolds play a relevant role both in the theory of constraints and in the theory of quantization. For instance, coisotropic submanifolds are precisely the first class constraints considered by Dirac in \cite{Dirac_1950} (see also \cites{BOJOWALD_2003}), where he developed the constraint algorithm for singular Lagrangians in the Hamiltonian setting. The constraint algorithm has been developed in geometrical terms in \cites{GotaySingularLagrangians1, GotaySinguarLagrangians2}. In this approach, the phase space of a singular system is a presymplectic manifold and, in \cite{got}, Gotay showed that every presymplectic manifold $(P, \Omega)$ may be imbedded in a symplectic manifold $(M, \omega)$ as a closed coisotropic submanifold. More precisely, there exists an imbedding $j: P \rightarrow M$ such that 
\begin{itemize}
\item $j(P)$ is closed in $M$;
\item $j^\ast \omega = \Omega$;
\item $TP^{\perp} \subseteq j_\ast (TP) $.
\end{itemize}
An alternative approach to the usual treatments of singular Lagrangians based on a Hamiltonian regularization scheme inspired on the coisotropic embedding of presymplectic systems was developed in \cites{regular}.

The ideas to develop the coisotropic reduction procedure came from Weinstein \cite{weinstein1977lectures} and were also partially inspired by Roels and Weisntein \cite{roels} and Marsden and Weinstein \cite{marsden1974reduction}.

Coisotropic reduction works when we give a coisotropic submanifold $N$ of a symplectic manifold $(M, \omega)$ and we consider (if it is well defined) the quotient manifold $N/(TN)^\perp $, where $(TN)^\perp$ is the symplectic complement of $TN$. Being involutive, this distribution along $N$ defines a foliation. The corresponding leaf space inherits a reduced symplectic form from the symplectic structure given on $M$. If in addition we have a Lagrangian submanifold $L$ with clean intersection with $N$, then $L \cap N$ projects into a Lagrangian submanifold of the quotient (see \cites{weinsteincreed, abraham2008foundations}). Coisotropic reduction can be also combined with symplectic reduction to develop a reduction procedure for the Hamilton-Jacobi equation in presence of symmetries (see \cites{vaquero}).

Coisotropic reduction has been extended to the field of contact manifolds (with the interest of being in a dissipative context) \cites{de2019contact, Tortorella2017}, but it has not been studied in sufficient detail in the case of cosymplectic manifolds nor in that of co-contact manifolds, the latter the natural settings to study time-dependent Hamiltonian contact systems \cites{de2022time,de2022time2}.

The objectives of this paper are twofold. On the one hand, to develop in detail the coisotropic reduction in the case of cosymplectic manifolds and those of co-contact, covering a gap in the literature. On the other hand, to present a survey that brings together in one place the different cases that appear in the study of Hamiltonian systems in classical mechanics.

The paper is structured as follows. Sections \ref{Symplecticvector} and \ref{Symplectic} are devoted to the main ingredients concerning symplectic Hamiltonian systems and the classical coisotropic reduction procedure. In order to go to the cosymplectic setting, we recall some general notions in Poisson structures (Section \ref{Poisson}) and then we consider the case of coisotropic reduction in the cosymplectic setting in Section \ref{Cosymplectic} (remember that this is the scenario to develop time dependent Hamiltonian systems). Contact manifolds require a more general notion than Poisson structures; indeed, they are examples of Jacobi structures, so that we give some fundamental notions in Section \ref{Jacobi}. The coisotropic reduction scheme developed in contact manifolds is the subject of Section \ref{Contact}, which is very different to the cosymplectic case since we are in presence of dissipative systems. We emphasize these differences in Section \ref{interlude}, where we study the corresponding Lagrangian settings. To combine dissipative systems with Hamiltonians depending also on time, we consider cocontact manifolds in Section \ref{Cocontact}, and develop there the corresponding coisotropic reduction procedure. Finally, we discuss a recent generalization of contact and cosymplectic systems called stable Hamiltonian systems in Section \ref{SHS}.

\section{Symplectic vector spaces} \label{Symplecticvector}

\setsectiontitle{SYMPLECTIC VECTOR SPACES}
We refer to \cites{abraham2008foundations, arnold, de2011methods, godbillon1969geometrie,libermann2012symplectic, vaisman2012lectures} for the main definitions and results.\\

\begin{Def}[Symplectic vector space] A \textbf{symplectic vector space} is a pair $(V, \omega)$ where $V$ is a finite dimensional vector space and $\omega$ is a non-degenerate $2$-form, called the \textbf{symplectic form}. Here, non-degeneracy means that the map $$ \flat_\omega: V \rightarrow V^*; \,\, v \mapsto i_v \omega$$ is an isomorphism.
\end{Def}

For every non-degenerate $2$-form
on $V$  there exist a basis $(x_i, y^i)$ whith $i$ taking values from $1$ to $n$ such that, making use of the summation convention, $\omega = x^i \wedge y_i$, where $(x^i, y_i)$ is the dual basis.
This implies that a symplectic vector space is necessarily of even dimension $2n$.\\

\begin{Def}[$\omega$-orthogonal] Let $W \subseteq V$ be a subspace of $V$. We define its \textbf{$\omega$-orthogonal} complement as $$W^{\perp_\omega} := \{v \in V \,\,| \,\, \omega(v,w) = 0, \,\, \forall w \in W\}.$$
\end{Def}

Note that $W^{\perp_\omega} = \ker (i^* \flat_\omega)$ where $i :W \hookrightarrow V$ is the natural inclusion. Using the non-degeneracy of $\omega$, this implies that $\dim W^{\perp_\omega} = \dim V - \dim W$, a result which will be useful throughout this paper.

The antisymmetry of $\omega$ gives rise to a wide variety of situations. In particular, we say that $W \subseteq V$ is:
\begin{itemize}
\item[$i)$] \textbf{Isotropic} if $W \subseteq W^{\perp_\omega}$ (if $W$ is isotropic, necessarily $\dim W \leq n$);
\item[$ii)$] \textbf{Coisotropic} if $W^{\perp_\omega} \subseteq W$ (if $W$ is coisotropic, necessarily $\dim W \geq n$);
\item[$iii)$] \textbf{Lagrangian} if $W$ is isotropic and has an isotropic complement (if $W$ is Lagrangian, necessarily $\dim W  =  n$);
\item[$iv)$] \textbf{Symplectic} if $V = W \oplus W^{\perp_\omega}$.
\end{itemize}

A subspace $W$ is Lagrangian if and only if $W = W^{\perp_\omega}$. This implies that Lagrangian subspaces are the isotropic subspaces of maximal dimension and the coisotropic subspaces of minimal dimension.

It can be easily checked that the symplectic complement has the following properties:
\begin{itemize}
\item[$i)$] $(W_1 \cap W_2)^{\perp_\omega} = W_1^{\perp_\omega} + W_2^{\perp_\omega};$
\item[$ii)$] $(W_1 + W_2)^{\perp_\omega} = W_1^{\perp_\omega} \cap W_2^{\perp_\omega};$
\item[$iii)$] $(W^{\perp_\omega})^{\perp_\omega} = W.$

\end{itemize}

\section{Coisotropic reduction in symplectic geometry}\label{Symplectic}
\setsectiontitle{COISOTROPIC REDUCTION IN SYMPLECTIC GEOMETRY}
\begin{Def}[Symplectic manifold]
 A \textbf{symplectic manifold} is pair $(M , \omega)$ where $M$ is a manifold and $\omega$ is
a closed $2$-form such that $(T_qM, \omega_q)$ is a symplectic vector space, for every $q \in M$. As in the linear case, for the existence of such a form, $M$ needs to have even dimension $2n$. 
\end{Def}

Every symplectic manifold is locally isomorphic, that is, there exists a set of canonical coordinates around each point:\\

\begin{theorem}[Darboux Theorem] \label{DarbouxTheorem}Let $(M, \omega)$ be a symplectic manifold and $q \in M$. There exist a coordinate system $(q^i,p_i)$ around $q$ such that $\omega = dq^i \wedge dp_ i$. These coordinates are called Darboux coordinates.
\end{theorem}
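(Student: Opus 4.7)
My plan is to prove the Darboux theorem via Moser's deformation trick, which reduces the local classification problem to solving an ODE whose existence is guaranteed on a neighborhood.

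First, I would use the linear result already recorded in Section \ref{Symplecticvector}: at the single point $q$, the form $\omega_q$ admits a symplectic basis. Choose any coordinate chart $(x^i, y_i)$ centered at $q$ whose coordinate differentials at $q$ realize such a basis; then $\omega$ and the constant-coefficient form $\omega_1 := dx^i \wedge dy_i$ agree at $q$. Setting $\omega_0 := \omega$, both $\omega_0$ and $\omega_1$ are closed, and the linear interpolation $\omega_t := (1-t)\omega_0 + t\omega_1$ is closed for every $t \in [0,1]$. Non-degeneracy is an open condition and holds at $q$ for every $\omega_t$, so by shrinking the chart we may assume each $\omega_t$ is symplectic on the whole neighborhood.

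Next, since we are working on a contractible neighborhood of $q$, the Poincaré lemma applied to the closed form $\omega_1 - \omega_0$ produces a $1$-form $\alpha$ with $d\alpha = \omega_1 - \omega_0$. By subtracting a suitable closed $1$-form (or invoking the standard chain-homotopy formula for the Poincaré lemma, which is evaluated at $q$), we may arrange that $\alpha_q = 0$. Using non-degeneracy of each $\omega_t$, define a time-dependent vector field $X_t$ by $i_{X_t}\omega_t = -\alpha$; because $\alpha$ vanishes at $q$, so does $X_t$, which ensures that the flow $\phi_t$ of $X_t$ is defined on some fixed neighborhood of $q$ for all $t \in [0,1]$.

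The key computation is then
\begin{equation*}
\frac{d}{dt}\,\phi_t^\ast \omega_t = \phi_t^\ast\!\left(\mathcal{L}_{X_t}\omega_t + \frac{\partial \omega_t}{\partial t}\right) = \phi_t^\ast\!\left(d\,i_{X_t}\omega_t + (\omega_1 - \omega_0)\right) = \phi_t^\ast(-d\alpha + d\alpha) = 0,
\end{equation*}
where I used Cartan's magic formula together with closedness of $\omega_t$. Integrating from $0$ to $1$ gives $\phi_1^\ast \omega_1 = \omega_0 = \omega$, so the coordinates $(q^i, p_i) := (x^i \circ \phi_1,\, y_i \circ \phi_1)$ are Darboux coordinates around $q$.

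I expect the main technical obstacle to be the last paragraph's interchange of differentiation and pullback together with the verification that $\phi_t$ exists on a uniform neighborhood of $q$ for all $t \in [0,1]$; both rely crucially on $X_t(q) = 0$, which is why choosing $\alpha$ to vanish at $q$ is not a cosmetic convenience but the heart of the argument. Everything else is algebraic manipulation with closed forms and Cartan's formula.
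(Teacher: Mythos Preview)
Your proof via Moser's deformation trick is correct and complete; every step (the linear normalization at $q$, the interpolation $\omega_t$, the Poincar\'e lemma with $\alpha_q=0$, the definition of $X_t$ via non-degeneracy, and the Cartan-formula computation) is standard and carried out accurately.

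However, the paper itself does not prove this theorem at all: it is stated as a classical result without proof, with the justification implicitly deferred to the references (e.g.\ Abraham--Marsden, Godbillon, Libermann--Marle). So rather than taking a different route, you have supplied a proof where the paper supplies none. What your approach buys is self-containedness and an explicit construction of the Darboux chart; what the paper's approach (omission) buys is brevity in a survey article where the Darboux theorem is foundational background rather than a contribution.
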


This non-degenerate form induces a bundle isomorphism between the tangent and cotangent bundles of $M$ point-wise, namely \begin{equation*} \flat_\omega: TM \rightarrow T^*M; \,\,\, v_q \mapsto \flat_\omega(v_q) = i_{v_q} \omega. \end{equation*} 
\begin{Def}[Hamiltonian vector field] Given $H \in \mathcal{C}^\infty(M)$, we define the \textbf{Hamiltonian vector field} of $H$ as $$X_H:= \sharp_\omega(dH),$$ where $\sharp_\omega = \flat_\omega^{-1}.$ We say that a vector field $X$ is \textbf{Hamiltonian} if $X = X_H$ for some function $H$ and say that $X$ is \textbf{locally Hamiltonian} if $X = X_H$ for some local function defined in a neighborhood of every point of the manifold.\\
\end{Def}

\begin{remark}{\rm
    Notice that a vector field is locally Hamiltonian if and only if $\flat_\omega(X)$ is closed, and Hamiltonian if and only if $\flat_\omega(X)$ is exact.}
\end{remark}

Locally, Hamiltonian vector fields have the expression $$X_H = \partder{H}{p_i} \partder{}{q^i} - \partder{H}{q^i} \partder{}{p_i}.$$
Then, the integral curves of the Hamiltonian vector field $X_H, (q^i(t), p_i(t))$, satisfy the local differential equations
\begin{align*}
    \totder{q^i}{t} &= \partder{H}{p_i},\\
    \totder{p_i}{t} &= - \partder{H}{q^i},
\end{align*}
which are the Hamilton's equations of motion.

The definitions of the different cases of subspaces given in the linear case can be extended point-wise to submanifolds $N \hookrightarrow M$. Consequently, we say that $N \hookrightarrow M$ is: 
\begin{itemize}
 \item[i)] \textbf{Isotropic} if $T_qN \subseteq T_qM$ is for every $q \in N$;
\item[ii)] \textbf{Coisotropic} if $T_qN \subseteq T_qM $ is for every $q \in N$;
\item[iii)] \textbf{Lagrangian} if $N$ is isotropic and there is a isotropic subbundle (where we understand isotropic point-wise) $E \subseteq TM | N$ such that $TM = TN \oplus E$ (here $\oplus$ denotes the Whitney sum). This is exactly the point-wise definition of a Lagrangian subspace asking for the coisotropic complement to vary smoothly;
\item[iv)] \textbf{Symplectic} if $T_qN \subseteq T_qM$ is for every $q \in N$.
\end{itemize}
These definitions extend naturally to distributions.

Just like in the linear case, a submanifold $N \hookrightarrow M$ is Lagrangian if and only if it is isotropic (or coisotropic) and has maximal (or minimal) dimension. This is a useful characterization that we will use several times in the rest of the paper.\\

\begin{lemma}\label{CharacterizationLagrangianSympelctic} Let $i: L \rightarrow M$ be a submanifold of dimension $n$. Then, $L$ is a Lagrangian submanifold of $(M, \omega)$ if and only if $i^*\omega = 0$.
\end{lemma}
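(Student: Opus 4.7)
The plan is to reduce the statement to the pointwise characterization of Lagrangian subspaces already established in Section \ref{Symplecticvector}, namely that a subspace $W$ of a symplectic vector space is Lagrangian if and only if it is isotropic and of dimension $n$.

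First I would unpack the meaning of $i^*\omega = 0$. By definition of pullback, this condition says that for every $q \in L$ and every $v, w \in T_qL$, one has $\omega_q(i_\ast v, i_\ast w) = 0$. Identifying $T_qL$ with its image $i_\ast(T_qL) \subseteq T_qM$, this is precisely the statement that $T_qL \subseteq (T_qL)^{\perp_\omega}$ for every $q \in L$, i.e.\ that $L$ is an isotropic submanifold in the pointwise sense defined earlier.

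For the forward direction, if $L$ is Lagrangian then $T_qL$ is a Lagrangian subspace of $(T_qM, \omega_q)$ for every $q$; in particular it is isotropic, so by the above equivalence $i^*\omega = 0$. For the reverse direction, the hypothesis $i^*\omega = 0$ gives that $L$ is isotropic. Combined with the assumption $\dim L = n$, the linear fact that an isotropic subspace of maximal dimension $n$ is Lagrangian yields that $T_qL$ is Lagrangian in $T_qM$ for every $q \in L$. The smoothness condition in the definition of Lagrangian submanifold (existence of a smooth isotropic complementary subbundle) is automatic from $L$ being a submanifold together with the pointwise Lagrangian condition, since one can locally pick Darboux coordinates from Theorem \ref{DarbouxTheorem} and exhibit such a complement.

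I do not expect any serious obstacle here; the only mildly delicate point is making sure the smooth complementary subbundle in the definition of a Lagrangian submanifold really does exist, which is where invoking the Darboux theorem or simply noting that a complementary subbundle to $TL$ in $TM|_L$ can be chosen and then pointwise modified to be isotropic is needed. Everything else is a direct translation between the pullback condition and the pointwise isotropy condition, combined with the dimension count.
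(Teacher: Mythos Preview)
Your proposal is correct and follows essentially the same approach as the paper: both reduce the statement to the fact that Lagrangian submanifolds are precisely the isotropic submanifolds of maximal dimension $n$, so that $i^*\omega = 0$ together with $\dim L = n$ gives the characterization. The paper's proof is a one-liner because the equivalence ``Lagrangian $\Leftrightarrow$ isotropic of maximal dimension'' is already asserted in the text immediately preceding the lemma; your additional care about the existence of a smooth isotropic complement is therefore unnecessary here, as that point has already been absorbed into the paper's stated characterization.
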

\begin{proof} It is trivial, since Lagrangian submanifolds are the isotropic submanifolds of maximal dimension, say $n$.
\end{proof}

\subsection{Hamiltonian vector fields as Lagrangian submanifolds}
\begin{Def}Let $(M , \omega)$ be a symplectic manifold. Define the \textbf{tangent symplectic structure} on $TM$ as $\omega_0 = -d\lambda_0$ where $\lambda_0= \flat_\omega^*\lambda_M$, and $\lambda_M$ is the Liouville $1$-form on the cotangent bundle.
\end{Def}
Recall that $\lambda_M$ is defined as follows:
$$
\lambda_M(\alpha_x)(X_{\alpha_x}) = \alpha_x(d_{\alpha_x}\pi_M \cdot X_{\alpha_x})
$$
where $X_{\alpha_z} \in T_{\alpha_x}(T^*M)$, $\alpha_x \in T^*_xM$, and $\pi_M : T^*M \longrightarrow M$ is the canonical projection. The Liouville $1$-form can be also defined as the unique $1$-form $\lambda_M$ on $T^*M$ such that, for every $1$-form $\alpha:M \rightarrow T^*M$, $$\alpha^*(\lambda_M) = \alpha.$$

In coordinates $(q^i, p_i, \dot{q}^i, \dot{p}_i)$ the tangent symplectic structure is $$\omega_0 = -dq^i \wedge d\dot{p}_i - d\dot{q}^i \wedge dp_i.$$

\begin{prop}\label{Pullbackvectorfields} Let $X: M \rightarrow TM$ be a vector field. Then $$X^*(\lambda_0) = \flat_\omega(X).$$
\end{prop}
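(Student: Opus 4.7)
The plan is to recognize that $\flat_\omega \circ X : M \to T^*M$ is itself a genuine $1$-form on $M$: since $X$ is a vector field and $\flat_\omega$ acts fibrewise by $v_q \mapsto i_{v_q}\omega$, the composite $q \mapsto \flat_\omega(X_q)$ sits in $T^*_q M$, giving a smooth section of $\pi_M : T^*M \to M$. Once this is noted, the statement should fall out of the universal characterisation of the Liouville $1$-form recalled just before the proposition, namely $\alpha^*\lambda_M = \alpha$ for every $1$-form $\alpha : M \to T^*M$.

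Concretely, I would first apply this universal property to the $1$-form $\alpha := \flat_\omega \circ X$, which immediately yields
\begin{equation*}
(\flat_\omega \circ X)^*\lambda_M \;=\; \flat_\omega(X).
\end{equation*}
Then I would invoke functoriality of the pullback of differential forms, $(f \circ g)^* = g^* \circ f^*$, with $g = X$ and $f = \flat_\omega$, to rewrite the left-hand side as
\begin{equation*}
(\flat_\omega \circ X)^*\lambda_M \;=\; X^*\bigl(\flat_\omega^*\lambda_M\bigr) \;=\; X^*\lambda_0,
\end{equation*}
using the very definition $\lambda_0 = \flat_\omega^*\lambda_M$ given in the paper. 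Chaining these two equalities gives $X^*\lambda_0 = \flat_\omega(X)$, as required.

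There is essentially no technical obstacle: the whole argument is a two-line unwinding of definitions, and the only conceptual step is realising that $\flat_\omega \circ X$ should be viewed as a section of $T^*M$ so that the defining property of $\lambda_M$ becomes applicable. If desired, one can double-check in Darboux coordinates $(q^i,p_i,\dot q^i,\dot p_i)$ on $TM$, where $\lambda_0 = -q^i\, d\dot p_i - \dot q^i\, dp_i$ would be compared with $\flat_\omega(X)$ computed from the local expression of $X$, but this verification is routine and not needed once the universal-property argument is in place.
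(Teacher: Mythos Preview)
Your proof is correct and follows essentially the same approach as the paper: both arguments reduce to the identity $(\flat_\omega \circ X)^*\lambda_M = \flat_\omega(X)$ from the universal property of the Liouville form, combined with functoriality of the pullback through the definition $\lambda_0 = \flat_\omega^*\lambda_M$. The only difference is cosmetic: the paper carries out the computation pointwise on a tangent vector $v \in T_qM$, whereas you work directly at the level of forms, which is slightly more streamlined.
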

\begin{proof} This is a straight-forward verification. Let $v \in T_qM$, then we have
\begin{align*}
\langle X^*\lambda_0 ,v \rangle &= \langle \lambda_0,  d_qX \cdot v \rangle = \langle \lambda_M, d_{X(q)}\flat_\omega \cdot d_q X \cdot v \rangle \\
&= \langle \flat_\omega(X)^* (\lambda_M), v \rangle = \langle \flat_\omega(X), v \rangle.
\end{align*}
\end{proof}

\begin{prop} Let $X: M \rightarrow TM$ be a vector field. Then $X$ is locally Hamiltonian if and only if $X(M)$ is a Lagrangian submanifold of $(TM, \omega_0)$.
\end{prop}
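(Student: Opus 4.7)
The plan is to combine the previous Proposition \ref{Pullbackvectorfields} with the dimension count provided by Lemma \ref{CharacterizationLagrangianSympelctic}. The strategy is to compute $X^{*}\omega_{0}$ directly and read off the biconditional.

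First I would observe that, since $X$ is a section of the tangent bundle $\pi_{M}\colon TM\to M$, it is automatically an embedding and $X(M)$ is a submanifold of $TM$ of dimension $2n$. Now $(TM,\omega_{0})$ has dimension $4n$, so $2n$ is exactly half-dimensional. By Lemma \ref{CharacterizationLagrangianSympelctic} it therefore suffices to verify that $X(M)$ is Lagrangian if and only if the pullback of $\omega_{0}$ by the inclusion vanishes, which (up to the diffeomorphism $M\cong X(M)$) is the same as asking $X^{*}\omega_{0}=0$.

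Next I would compute, using that $\omega_{0}=-d\lambda_{0}$ and that pullback commutes with the exterior derivative,
\[
X^{*}\omega_{0} \;=\; -\,d\bigl(X^{*}\lambda_{0}\bigr) \;=\; -\,d\bigl(\flat_{\omega}(X)\bigr),
\]
where the last equality is precisely the content of Proposition \ref{Pullbackvectorfields}. Therefore $X^{*}\omega_{0}=0$ if and only if the $1$-form $\flat_{\omega}(X)$ is closed, which, as noted in the remark following the definition of Hamiltonian vector fields, is exactly the condition for $X$ to be locally Hamiltonian.

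Putting the two equivalences together gives the statement. There is no real obstacle here: everything reduces to Proposition \ref{Pullbackvectorfields} plus the half-dimensional Lagrangian criterion. The only subtle point worth mentioning explicitly is that one must know $X(M)$ is a genuine submanifold of the correct dimension, which is immediate from $X$ being a smooth section of $TM$.
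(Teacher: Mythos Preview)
Your proof is correct and follows essentially the same approach as the paper: reduce to the half-dimensional isotropic criterion via Lemma \ref{CharacterizationLagrangianSympelctic}, then compute $X^{*}\omega_{0}=-d(X^{*}\lambda_{0})=-d(\flat_{\omega}(X))$ using Proposition \ref{Pullbackvectorfields}. Your version is slightly more explicit in justifying that $X(M)$ is a genuine submanifold of the right dimension, but the argument is otherwise identical.
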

\begin{proof} We only check that $X(M)$ is isotropic using \textcolor{red}{Lemma \ref{CharacterizationLagrangianSympelctic}}, since $\dim X(M) = \dim M  = \frac{1}{2} \dim TM.$ In fact:
\begin{align*}
X^*\omega_0 = -X^*(d\theta_0) = -d(X^*\theta_0) = -d(\flat_\omega(X)),
\end{align*}
which gives the characterization.
\end{proof} 

We can also check this last proposition easily in coordinates. Indeed, let $$X = X^i \partder{}{q^i} + Y_i \partder{}{p_i}.$$ We have $$-X^* \omega_0 =  \partder{Y_i}{q^j}  dq^i \wedge dq^j + \left( \partder{Y_i}{p_j} + \partder{X^j}{q_i} \right) dq^i \wedge dp_j+ \partder{X^i}{p_j} dp_j \wedge dp_i,$$
 and thus, $X$ defines a Lagrangian submanifold if and only if 
 \begin{align*}
  \partder{Y_i}{q^j} -  \partder{Y_j}{q^i}&= 0,\\
  \partder{Y_i}{p_j} + \partder{X^j}{q_i} &= 0,\\
  \partder{X^i}{p_j} - \partder{X^j}{p_i} &= 0.
 \end{align*}

 Taking $$(G^1, \dots, G^n, G^{n+1}, \dots, G^{2n}) := (X^i, -Y_i)$$ and $$(x^1, \dots, x^n, x^{n+1}, \dots, x^{2n}) := (q^i, p_i),$$ these conditions become $$\partder{G^i}{x^j} = \partder{G^j}{x^i}.$$ This implies $G^i = \displaystyle \partder{H}{x^i},$ for some local function $H$. It is clear that locally, we have $X = X_H$.
\subsection{Coisotropic reduction}

Now, given a coisotropic submanifold $N \hookrightarrow M$, we define the distribution $(TN)^{\perp_\omega}$ on $N$ as
the subbundle of $TM |_N$ consisting of all $\omega$-orthogonal spaces $(T_qN)^{\perp_\omega}$. Note that this distribution is regular and its rank is $\dim M  - \dim N$.\\

\begin{prop} Let $(M, \omega)$ be a symplectic manifold and $N \hookrightarrow M$ be a coisotropic submanifold.  The  distribution $q \mapsto (T_qN)^{\perp_\omega}$ is involutive.
\end{prop}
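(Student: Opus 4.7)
The plan is to identify the distribution $(TN)^{\perp_\omega}$ with the kernel of the pulled-back form $i^*\omega$, where $i\colon N \hookrightarrow M$ is the inclusion, and then exploit the fact that $i^*\omega$ is closed.

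First I would observe that since $N$ is coisotropic, $(T_qN)^{\perp_\omega} \subseteq T_qN$ for every $q \in N$, so the sections of this distribution are genuine vector fields on $N$. Moreover, for $v \in T_qN$ we have $v \in (T_qN)^{\perp_\omega}$ iff $\omega_q(v,w)=0$ for all $w \in T_qN$, which is exactly $(i^*\omega)_q(v,w)=0$ for every such $w$. Hence
\[
(TN)^{\perp_\omega} \;=\; \ker(i^*\omega),
\]
viewed as a regular distribution on $N$ (its rank is $\dim M - \dim N$ by the remark after the definition of the $\omega$-orthogonal).

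Next I would take two sections $X,Y$ of $(TN)^{\perp_\omega}$, so that $i_X(i^*\omega) = 0 = i_Y(i^*\omega)$, and verify that $[X,Y]$ still satisfies $i_{[X,Y]}(i^*\omega)=0$. Combining the graded commutator identity $i_{[X,Y]} = \mathcal{L}_X\, i_Y - i_Y\, \mathcal{L}_X$ with Cartan's magic formula $\mathcal{L}_X = d\,i_X + i_X\,d$ yields
\begin{align*}
i_{[X,Y]}(i^*\omega) &= \mathcal{L}_X\, i_Y(i^*\omega) - i_Y\,\mathcal{L}_X(i^*\omega) \\
&= -\,i_Y\bigl(d\,i_X(i^*\omega) + i_X\,d(i^*\omega)\bigr) \\
&= -\,i_Y\,i_X\, i^*(d\omega) = 0,
\end{align*}
where I used $d \circ i^* = i^* \circ d$, the hypotheses $i_X(i^*\omega)=0=i_Y(i^*\omega)$, and finally $d\omega = 0$. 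Therefore $[X,Y]$ lies in $\ker(i^*\omega) = (TN)^{\perp_\omega}$, which is exactly involutivity.

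I do not foresee any serious obstacle. The only subtlety worth emphasizing is that one must work with $i^*\omega$ rather than $\omega$ itself in order for all expressions and Lie derivatives to stay intrinsic to $N$; passing from $\omega$ to $i^*\omega$ uses precisely the coisotropic hypothesis, which ensures that the $\omega$-orthogonal distribution is contained in $TN$ and hence coincides with the kernel of $i^*\omega$. Closedness of $\omega$ is the other essential ingredient, and it is in fact the only place where we genuinely need to be on a symplectic (as opposed to just almost symplectic) manifold.
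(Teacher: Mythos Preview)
Your proof is correct and follows essentially the same idea as the paper: both arguments reduce to showing that the kernel of the closed $2$-form $i^*\omega$ is involutive, using $d\omega=0$ as the key input. The only cosmetic difference is that the paper expands $d\omega(X,Y,Z)=0$ term by term to isolate $\omega([X,Y],Z)=0$, whereas you package the same computation via Cartan's formula and the identity $i_{[X,Y]}=\mathcal{L}_X i_Y - i_Y\mathcal{L}_X$.
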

\begin{proof} Let $X,Y$ be vector fields along $N$ with values in
$TN^{\perp_\omega}$ and $Z$ be any other vector field tangent to $N$. Since $\omega$ is closed we have \begin{align*} 0 =&
( d\omega)(X,Y,Z) = X(\omega(Y,Z)) - Y(\omega(X,Z)) + Z\omega(X,Y)\\ & - \omega([X,Y], Z) + \omega([X,Z], Y) - \omega([Y,Z],X) = -
\omega([X,Y],Z), \end{align*} since $X,Y$ belong to the orthogonal complement of $TN$. We conclude that $\omega([X,Y],Z) = 0$ for every field $Z$ tangent to $N$, that is, $[X,Y] \in (TN)^{\perp_\omega}$. \end{proof}

Since the distribution is involutive and regular, the Frobenius Theorem guarantees the existence of a maximal regular foliation $\mathcal{F}$ of $N$, that is, a decomposition of $N$ into maximal submanifolds tangent to the distribution. In what follows, we suppose that $N / \mathcal{F}$ (the space of all leaves) admits a manifold structure so that the projection $$\pi : N \rightarrow N / \mathcal{F}$$ is a submersion. The main result is the Weinstein reduction theorem \cites{weinstein1977lectures}:\\ 

\begin{theorem}[Coisotropic reduction in the symplectic
setting] \label{SymplecticReduction} Let $(M , \omega)$ be a symplectic manifold and $N \hookrightarrow M$ be a coisotropic submanifold. If $N / \mathcal{F}$ (the spaces of all leaves under the distribution $q \mapsto (T_qN)^{\perp_\omega}$) admits a manifold structure such that $N \xrightarrow{\pi} N / \mathcal{F}$ is a submersion, there exist an unique $2$-form $\omega_N$ on $N / \mathcal{F}$ that defines a
symplectic manifold structure such that, if $N \xrightarrow{i}M$ is the natural inclusion, then $i^* \omega = \pi ^* \omega_N$. The following diagram summarizes the situation:
 \[
\begin{tikzcd} N \arrow[r, "i"] \arrow[d, "\pi"] & M \\ N / \mathcal{F} \end{tikzcd} \] \end{theorem}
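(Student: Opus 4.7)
The plan is to construct $\omega_N$ directly by declaring that, for $\pi$-related vectors, it agrees with $i^*\omega$, and then to check the four required properties (well-definedness, the relation $\pi^*\omega_N = i^*\omega$, non-degeneracy, closedness), plus uniqueness.

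First I would define $\omega_N$ pointwise. Fix $[q] \in N/\mathcal{F}$ and $u,v \in T_{[q]}(N/\mathcal{F})$. Since $\pi$ is a submersion, there exist $\tilde u, \tilde v \in T_qN$ with $d_q\pi(\tilde u) = u$, $d_q\pi(\tilde v) = v$; set
\[
\omega_N([q])(u,v) := \omega_q(\tilde u, \tilde v).
\]
To see this is independent of the lift, observe that $\ker d_q\pi = (T_qN)^{\perp_\omega}$ by construction of the foliation, so any two lifts of $u$ differ by an element $w \in (T_qN)^{\perp_\omega}$, and then $\omega_q(w,\tilde v) = 0$ since $\tilde v \in T_qN$. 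This also automatically gives $\pi^*\omega_N = i^*\omega$, provided we can show $\omega_N$ is independent of the representative $q$ of the leaf.

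For the latter, I would show that $i^*\omega$ is basic with respect to the foliation $\mathcal{F}$. We already know $i_X(i^*\omega) = 0$ for every vector field $X$ tangent to the leaves (i.e.\ $X \in (TN)^{\perp_\omega}$), by the defining property of $(TN)^{\perp_\omega}$. Using Cartan's magic formula and the fact that $\omega$ is closed,
\[
\mathcal{L}_X(i^*\omega) = d\, i_X(i^*\omega) + i_X\, d(i^*\omega) = 0 + i_X\, i^*(d\omega) = 0.
\]
Hence $i^*\omega$ is $\mathcal{F}$-basic, and the standard descent lemma produces a well-defined $2$-form $\omega_N$ on $N/\mathcal{F}$ with $\pi^*\omega_N = i^*\omega$; this is exactly the $\omega_N$ defined above.

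Next, closedness follows painlessly: $\pi^*(d\omega_N) = d\pi^*\omega_N = d\, i^*\omega = i^*d\omega = 0$, and $\pi^*$ is injective on forms since $\pi$ is a surjective submersion, so $d\omega_N = 0$. For non-degeneracy, suppose $\omega_N([q])(u,\cdot) = 0$; lifting $u$ to $\tilde u \in T_qN$, this says $\omega_q(\tilde u,\tilde v) = 0$ for every $\tilde v \in T_qN$, i.e.\ $\tilde u \in (T_qN)^{\perp_\omega} = \ker d_q\pi$, so $u = 0$. Uniqueness is immediate from $\pi^*\omega_N = i^*\omega$ and the injectivity of $\pi^*$.

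The only genuinely delicate point is the descent step, i.e.\ verifying that $i^*\omega$ is basic; this is precisely where the closedness of $\omega$ and the coisotropic hypothesis both intervene. The rest is bookkeeping of dimensions, and non-degeneracy comes out for free because of how tightly $(TN)^{\perp_\omega}$ matches $\ker d\pi$ in the coisotropic case.
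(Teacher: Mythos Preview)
Your proof is correct and follows essentially the same route as the paper: define $\omega_N$ by the forced formula, use $\ker d_q\pi = (T_qN)^{\perp_\omega}$ for independence of the lift, use $\mathcal{L}_X(i^*\omega)=0$ via Cartan's formula for independence of the point in the leaf, and deduce closedness from $\pi$ being a submersion. Your treatment is slightly more explicit than the paper's in spelling out the non-degeneracy argument and in phrasing the descent step in terms of basic forms, but the underlying argument is the same.
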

 \begin{proof} Uniqueness is
guaranteed from the imposed relation since it forces us to define $$(\omega_N)_{[q]}([u],[v]):= \omega(u,v),$$ where $[u]:= T\pi(q) \cdot u$. We only need to check that this is a well-defined closed form and that it is non-degenerate.

We begin showing that our definition does not depend on the representative of the vector $[u]$. For this, it is sufficient to
observe that $(\omega_N)_{[q]}([u],[v]) = 0$ whenever $u$ is a vector in the distribution.

Furthermore, $$\mathcal{L}_X \omega = di_X \omega + i_X d\omega = 0$$ for every vector field $X$ in $N$ with values in
$(TN^{\perp_\omega})$, and this implies the independece of the point (for every two points in the same leaf of the foliation can be joined by a finite union of flows of such fields).

It is clearly non-degenerate and it is closed, since $d\pi^* \omega_N = i_*d\omega = 0$ and $\pi$ is a submersion. \end{proof}

\subsection{Projection of Lagrangian submanifolds}

\begin{Def}[Clean intersection]We say that two submanifolds $L, N \hookrightarrow M$ have \textbf{clean intersection} if $L
\cap N \hookrightarrow M$ is again a submanifold and $T_q(L \cap N) = T_q L \cap T_qN$, for every $q \in L \cap N$.\\ \end{Def}

 \begin{prop} \label{LagrangianProjectionSymplectic} Let $L
\hookrightarrow M$ be a Lagrangian submanifold and $N \hookrightarrow M$ a coisotropic submanifold. If they have clean
intersection and $L_N:= \pi(L \cap N)$ is a submanifold of $N/\mathcal{F}$, $L_N$ is Lagrangian. \end{prop}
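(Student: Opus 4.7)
The plan is to apply Lemma \ref{CharacterizationLagrangianSympelctic} to $L_N \hookrightarrow N/\mathcal{F}$, which reduces the problem to verifying (a) isotropy of $L_N$ and (b) $\dim L_N = \tfrac{1}{2}\dim(N/\mathcal{F})$. Write $\dim M = 2n$ and $\dim N = d$; since $N$ is coisotropic, $d \geq n$, and Theorem \ref{SymplecticReduction} gives $\dim(N/\mathcal{F}) = 2(d-n)$, so the target dimension is $\dim L_N = d-n$.

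For isotropy, let $\pi_L := \pi|_{L\cap N}: L\cap N \to L_N$ and let $j: L_N \hookrightarrow N/\mathcal{F}$ denote the inclusion. Pulling back $j^{*}\omega_N$ along $\pi_L$ factors through the inclusion $L\cap N \hookrightarrow N \xrightarrow{i} M$, so by Theorem \ref{SymplecticReduction},
\begin{equation*}
\pi_L^{*}(j^{*}\omega_N) = \omega|_{L\cap N} = 0,
\end{equation*}
the last equality because $L\cap N \subseteq L$ and $L$ is Lagrangian. Once $\pi_L$ is established as a surjective submersion onto $L_N$, this forces $j^{*}\omega_N = 0$.

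The dimension count is the core of the argument. By clean intersection $T_q(L\cap N) = T_qL \cap T_qN$, and since $(T_qN)^{\perp_\omega} \subseteq T_qN$ by coisotropy, the kernel of $T_q\pi_L$ equals $T_qL \cap (T_qN)^{\perp_\omega}$. Because $L$ is Lagrangian, $T_qL = (T_qL)^{\perp_\omega}$; combined with the identity $(A+B)^{\perp_\omega} = A^{\perp_\omega} \cap B^{\perp_\omega}$ from Section \ref{Symplecticvector}, this yields
\begin{equation*}
T_qL \cap (T_qN)^{\perp_\omega} = (T_qL + T_qN)^{\perp_\omega},
\end{equation*}
of dimension $2n - (n + d - \dim T_q(L\cap N)) = n - d + \dim T_q(L\cap N)$. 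Rank–nullity then gives $\pi_L$ the constant pointwise rank $d-n$; since $L_N = \pi_L(L\cap N)$ is assumed to be a submanifold, the constant rank theorem forces $\dim L_N = d-n$ and simultaneously identifies $\pi_L$ as a surjective submersion onto $L_N$, closing the isotropy step.

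The main obstacle is the coordinated deployment of the three hypotheses: clean intersection is needed to identify $T(L\cap N)$ with $TL \cap TN$; coisotropy cuts $(TN)^{\perp_\omega}$ down to a kernel inside $TN$; and Lagrangianity converts $(TL)^{\perp_\omega}$ back to $TL$ in the symplectic-complement identity. Each hypothesis enters exactly once, and the remaining ingredients (rank–nullity and the pullback identity of Theorem \ref{SymplecticReduction}) are routine.
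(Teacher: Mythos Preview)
Your proof is correct and follows essentially the same approach as the paper: both establish isotropy directly from the defining relation of $\omega_N$ and then perform a dimension count using the kernel--rank formula together with a symplectic-complement identity that exploits $T_qL = (T_qL)^{\perp_\omega}$. The only minor differences are cosmetic: you use $(T_qL+T_qN)^{\perp_\omega}=T_qL\cap(T_qN)^{\perp_\omega}$ whereas the paper uses the dual identity $(T_qL\cap T_qN)^{\perp_\omega}=T_qL+(T_qN)^{\perp_\omega}$, and you are more explicit about invoking the constant-rank theorem to justify that $\pi_L$ is a submersion, which the paper leaves implicit.
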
 \begin{proof} It is
sufficient to see that is isotropic and that it has maximal dimension in $N / \mathcal{F}$. It is isotropic since $[u] \in
T_q(L _ N)$ implies $\omega_N([u],[v]) = \omega(u,v) = 0$, for every $[v] \in T_q(L _ N)$. Now, since $\ker d_q\pi =
(T_qN)^{\perp_\omega}$, the kernel-range formula yields \begin{equation}\label{eq1symplectic} \dim L_N = \dim(L \cap N) - \dim (T_q L \cap (T_qN)^{\perp_\omega}). \end{equation} 
Furthermore, \begin{equation}\label{eq2symplectic} \dim(L \cap N) + \dim (T_qL + (T_qN)^{\perp_\omega}) = \dim M,
\end{equation} beacause $L$ is Lagrangian and $N$ coisotropic. Substituting (\ref{eq2symplectic})in (\ref{eq1symplectic}) we obtain \begin{align} \nonumber
\dim L_N &= \dim M - \dim (T_qL + (T_qN)^{\perp_\omega})- \dim (T_q L \cap (T_qN )^{\perp_\omega}) \\ & \nonumber = \dim M - \dim L - \dim (T_qN)^{\perp_\omega} = \dim M - \dim L -(\dim M - \dim N)\\ &\nonumber = \dim N - \dim L = \dim N - \frac{1}{2} \dim
M, \end{align} which is exactly $\frac{1}{2} \dim N/ \mathcal{F}$, as a direct calculation shows. \end{proof}

\subsection{An example}
As an example of coisotropic reduction, let us take $$M := \mathbb{R}^{2(n +1)},$$ and $$N := \mathbb{S}^{2n +1}.$$ Since it has codimension $1$, it defines a coisotropic submanifold if we endow $M$ with the natural symplectic form $\omega :=  d q^i \wedge  d p_i.$  It is easy to check that $(TN)^{\perp_\omega}$ is generated by $$ \sum_i \left(p_i \partder{}{q^i} - q^i \partder{}{p_i}\right),$$
where $(q^i, p_i)$ are canonical coordinates in $\mathbb{R}^{2(n +1)}$.
Therefore, the leaves of the corresponding foliation are precisely the orbits of the previous vector field in $\mathbb{S}^{2n +1}$. Making the identification $$\mathbb{
R}^{2(n +1)} = \mathbb{C}^{n +1},$$ two points $x, y \in \mathbb{S}^{2n +1}$ are in the same orbit if and only if there exist some $\alpha \in \mathbb{C}$ ($|\alpha| = 1$) such that $$\alpha x = y.$$ 
This implies that $$\mathbb{S}^{2n +1}/\mathcal{F}$$ is the complex projective space of complex dimension $n $, $\mathbb{P}^n \mathbb{C}$. Therefore, we conclude that through coisotropic reduction we can define a natural symplectic structure on $\mathbb{P}^n \mathbb{C},$ for every $n$.

The above example is taken from Exercise 5.3B in \cites{abraham2008foundations}.

\section{Poisson structures}\label{Poisson}
\setsectiontitle{POISSON STRUCTURES}
A symplectic structure $(M,\omega)$ induces a Lie algebra structure on the ring of functions $\mathcal{C}^\infty(M)$.\\

\begin{Def}[Poisson bracket] Let $(M,\omega)$ be a symplectic manifold and $f, g \in \mathcal{C}^\infty(M)$. We define the Poisson bracket of $f,g$ as the function $$\{f,g\} := \omega(X_f,X_g).$$
\end{Def}

It is easily checked that in Darboux coordinates  the Poisson bracket is $$\{f,g\} = \frac{\partial f}{\partial q^i}\frac{\partial g}{\partial p_i}- \frac{\partial f}{\partial p_i} \frac{\partial g}{\partial q^i}.$$

\begin{prop} The Poisson bracket satifies the following properties:
\begin{itemize}
\item[i)] It is bilinear with respect to $\mathbb{R}$;
\item[ii)]  $\{f,g\cdot h\} = g \cdot \{f,h\} +\{f,g\} \cdot h $ (the Leibniz rule);
\item[iii)] $\{f,\{g,h\}\} + \{h,\{f,g\}\} + \{g,\{h,f\}\} = 0$ (the Jacobi identity).
\end{itemize}
\end{prop}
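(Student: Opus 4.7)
My plan is to treat the three items in order of increasing difficulty, the first two being immediate and Jacobi being the real content.

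For bilinearity, the key observation is that the assignment $H \mapsto X_H = \sharp_\omega(dH)$ is $\mathbb{R}$-linear, because both $d$ and $\sharp_\omega$ are. Hence for $a,b \in \mathbb{R}$ and $f,g,h \in \mathcal{C}^\infty(M)$,
\[
\{af + bg, h\} = \omega(aX_f + bX_g, X_h) = a\{f,h\} + b\{g,h\},
\]
and bilinearity in the second slot follows the same way (or by antisymmetry).

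For the Leibniz rule, from $d(gh) = g\,dh + h\,dg$ and the $\mathcal{C}^\infty(M)$-linearity of $\sharp_\omega$ one gets $X_{gh} = g\,X_h + h\,X_g$. Then
\[
\{f, gh\} = \omega(X_f, g X_h + h X_g) = g\,\omega(X_f, X_h) + h\,\omega(X_f, X_g) = g\,\{f,h\} + \{f,g\}\,h.
\]

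The main obstacle is the Jacobi identity: this is where closedness of $\omega$ enters essentially. My strategy is to first establish the bracket homomorphism $[X_f, X_g] = -X_{\{f,g\}}$ and then apply it to $h$. Every Hamiltonian vector field is symplectic, since Cartan's formula together with $d\omega = 0$ gives $\mathcal{L}_{X_f}\omega = d\,i_{X_f}\omega + i_{X_f}d\omega = d(df) = 0$. Using the commutator identity $i_{[X,Y]} = \mathcal{L}_X \circ i_Y - i_Y \circ \mathcal{L}_X$ with $Y = X_g$ and $X = X_f$, and substituting $\mathcal{L}_{X_f}\omega = 0$ and $i_{X_g}\omega = dg$, yields
\[
i_{[X_f, X_g]}\omega = \mathcal{L}_{X_f}(dg) = d(X_f(g)) = -d\{f,g\},
\]
where the last equality uses $X_f(g) = df(X_g) \cdot (-1) \cdot \ldots$ — more cleanly, $X_f(g) = dg(X_f) = -df(X_g) \cdot \ldots$; explicitly, from $i_{X_f}\omega = df$ we get $\omega(X_f,X_g) = df(X_g) = X_g(f) = \{f,g\}$, hence $X_f(g) = -\{f,g\}$. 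Therefore $\flat_\omega([X_f,X_g]) = \flat_\omega(-X_{\{f,g\}})$ and the homomorphism relation follows from non-degeneracy.

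To finish, I apply both sides of $[X_f,X_g] = -X_{\{f,g\}}$ to $h$. The left side expands as $X_f(X_g h) - X_g(X_f h) = \{f,\{g,h\}\} - \{g,\{f,h\}\}$ after twice substituting $X_a(b) = -\{a,b\}$, and the right side becomes $-X_{\{f,g\}}(h) = \{\{f,g\},h\} = -\{h,\{f,g\}\}$. Rearranging and using antisymmetry $\{g,\{f,h\}\} = -\{g,\{h,f\}\}$ gives
\[
\{f,\{g,h\}\} + \{g,\{h,f\}\} + \{h,\{f,g\}\} = 0.
\]
The only step that is not pure bookkeeping is establishing $\mathcal{L}_{X_f}\omega = 0$, and this is precisely where the closedness hypothesis on $\omega$ is used; everything else is a translation between the bracket and the Lie algebra of Hamiltonian vector fields.
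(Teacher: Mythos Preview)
The paper does not actually prove this proposition; it merely states it, right after recording the Darboux-coordinate expression $\{f,g\} = \frac{\partial f}{\partial q^i}\frac{\partial g}{\partial p_i} - \frac{\partial f}{\partial p_i}\frac{\partial g}{\partial q^i}$. The implicit route the paper is gesturing at is therefore a direct coordinate verification: bilinearity and Leibniz are immediate from that formula, and Jacobi reduces to a mechanical cancellation of mixed second partials.

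Your proof is correct and takes a genuinely different, coordinate-free route. You establish the Lie-algebra antihomomorphism $[X_f,X_g] = -X_{\{f,g\}}$ via Cartan's formula and then read off Jacobi by applying both sides to $h$. This has the advantage of making transparent exactly where $d\omega = 0$ is used (in $\mathcal{L}_{X_f}\omega = 0$), whereas in the coordinate argument closedness is hidden in the very existence of Darboux coordinates. The coordinate approach is more elementary and self-contained; yours is more conceptual and explains why Jacobi would fail for a non-closed nondegenerate $2$-form. One cosmetic point: the passage where you compute $X_f(g)$ is written as a stream of self-corrections; the final identity $\omega(X_f,X_g) = df(X_g) = X_g(f) = \{f,g\}$, hence $X_f(g) = -\{f,g\}$, is correct, and everything downstream goes through cleanly.
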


Taking into consideration the previous definition, we can generalize the notion of symplectic manifolds as follows:\\

\begin{Def}[Poisson manifold] A Poisson manifold is a pair $(P, \{ \cdot, \cdot \})$ where $P$ is a manifold and $\{ \cdot , \cdot \}$ is an antisymmetric bracket in the ring of functions $\mathcal{C}^\infty(P)$ satisfying the Leibniz rule and the Jacobi identity.\\
\end{Def}
\begin{Def}[Hamiltonian vector field, Characteristic distribution] Given $H \in \mathcal{C}^\infty(M),$ the Leibniz rule implies that $\{H,\cdot\}$ defines a derivation on $C^\infty(M)$ an thus is associated to a unique vector field $X_H$, which will be called the \textbf{Hamiltonian vector field} of $H$. The collection of all Hamiltonian vector fields generates the \textbf{characteristic distribution}, namely $$\mathcal{S}_q:= \langle v = X_H(q) \,\, \text{for all} \,\, H \in \mathcal{C}^\infty(M)\rangle.$$
\end{Def}

The definition of Hamiltonian vector field implies that $\{f,g\}$ only depends on the values of $df, dg$ and thus we can define a bivector field $$ \Lambda(\alpha, \beta):= \{f,g\}$$ where $df = \alpha, dg = \beta.$ We have $\{f,g\} = \Lambda(df,dg).$ $\Lambda$ also satisfies the partial differential equation $[\Lambda, \Lambda] = 0$, where $[\cdot, \cdot]$ is the Schouten-Nijenhuis bracket  \cites{vaisman2012lectures}. This last property is actually equivalent to the Jacobi identity, that is, given a bivector field $\Lambda$, $\{f,g\}:= \Lambda(df,dg)$ defines a Poisson structure if and only if $[\Lambda, \Lambda] = 0.$\\

\begin{Def} Let $(P, \{\cdot, \cdot \})$ be a Poisson manifold. Then we define $$\sharp_\Lambda: T^*P \rightarrow TP; \,\, \alpha_q \mapsto i_{\alpha_q}\Lambda.$$ Notice that $\im \sharp_\Lambda = \mathcal{S}$, the characteristic distribution.
\end{Def}

\medskip

\begin{remark}{\rm 
    In the case of symplectic manifolds $\sharp_ \omega = \sharp_\Lambda$, and the characteristic distribution is the whole tangent bundle; however, in the general setting $\sharp_\Lambda$ need not be a bundle isomorphism. Actually, if $\sharp_\Lambda$ is a bundle isomorphism, it arises form a symplectic structure defined as $\omega(v,w):= \Lambda(\sharp_\Lambda^{-1}(v), \sharp_\Lambda^{-1}(w))$ \cites{de2011methods}.}
\end{remark}

This type of distributions is in general not of constant rank, so we cannot directly apply the Frobenius theorem. But there is an extension of the result, due to Stefan \cites{stefan} and Sussmann \cite{sussmann1973orbits} (independently) that works for generalised distributions, locally generated by vector fields that leave the distribution invariant. This is the situation for characteristic distributions in the case of Poisson manifolds (see \cites{libermann2012symplectic}).

So, the characteristic distribution is involutive  and each leaf $S$ of the foliation admits a symplectic structure defining for $f,g \in \mathcal{C}^\infty(S)$ and $q \in S$, $$\{f,g\}(q) := \{\widetilde f, \widetilde g\}(q)$$ for arbitrary extensions $\widetilde f,\widetilde g \in\mathcal{C}^\infty(P)$ of $f,g$ respectively. It can be easily checked that this definition does not depend on the chosen functions and that it defines a non-degenerate Poisson structure and thus, $S$ is a symplectic manifold \cites{vaisman2012lectures}.
The symplectic form is given by $$\omega_S(X_f, X_g) = \{f,g\},$$ where $f, g \in C^\infty(S).$ \\

\begin{Def}[$\Lambda$-orthogonal] Let $\Delta_q \subseteq T_qP$ be a subspace on a Poisson manifold $(P, \{\cdot, \cdot \})$. We define the \textbf{$\Lambda$-orthogonal complement} $\Delta_q^{\perp_\Lambda}= \sharp_\Lambda(\Delta_q^0)$ where $\Delta_q^0$ is the annihilator of $\Delta_q$, that is, $\Delta_q^0:= \{\alpha \in T^*_qP \,\, | \, \, \alpha = 0 \,\, \text{in} \,\, \Delta_q\}.$
\end{Def}

Just as in the symplectic scenario, we say that a subspace $\Delta_q\subseteq T_qP$ is
\begin{itemize}
\item[$i)$] \textbf{Isotropic} if $\Delta_q \subseteq \Delta_q^{\perp_\Lambda}$ for every $q \in P$;
\item[$ii)$] \textbf{Coisotropic} if $\Delta_q^{\perp_\Lambda} \subseteq \Delta_q$ for every $q \in P$;
\item[$iii)$] \textbf{Lagrangian} if $\Delta_q =  \Delta_q^{\perp_\Lambda} \cap \mathcal{S}_q$ for every $q \in P$. Notice that this is equivalent to $\Delta_q \cap \mathcal{S}_q$ being Lagrangian in each symplectic vector space $\mathcal{S}_q.$
\end{itemize}

The $\Lambda$-orthogonal complement satisfies the following properties:

\begin{itemize}
\item[$i)$] $(W_1 \cap W_2)^{\perp_\Lambda} = W_1^{\perp_\Lambda} + W_2^{\perp_\Lambda};$
\item[$ii)$] $(W_1 + W_2)^{\perp_\Lambda} \subseteq  W_1^{\perp_\Lambda} \cap W_2^{\perp_\Lambda}.$\\
\end{itemize}

\begin{remark}{\rm
    For symplectic manifolds, the above definitions coincide with the ones previously given.}
\end{remark}


\section{Coisotropic reduction in cosymplectic geometry}\label{Cosymplectic}
\setsectiontitle{COISOTROPIC REDUCTION IN COSYMPLECTIC GEOMETRY}

Cosymplectic structures are relevant precisely because they are the natural arena to develop time-dependent Lagrangian and Hamiltonian mechanics \cites{de2011methods}.\\ 

\begin{Def}[Cosymplectic manifold] A \textbf{cosymplectic manifold} is a triple $(M,\Omega, \theta)$ where $M$ is a $(2n +
1)$-manifold, $\theta$ is a closed $1$-form and $\Omega$ is a closed $2$-form such that $\theta \wedge \Omega^n\neq 0$.
\end{Def} 
Similar to the symplectic setting, there exist canonical coordinates, which will be called Darboux coordinates $(q^i,p_i,t)$ such that $\Omega = dq^i \wedge dp_i$ and $\theta = dt$. The existence of such coordinate charts is proven in \cites{godbillon1969geometrie}.\\

There are two natural distributions defined on $M$: \begin{itemize} \item[i)] The \textbf{horizontal distribution} $\mathcal{H} :=
\ker \theta$; \item[ii)] The \textbf{vertical distribution} $\mathcal{V}:= \ker \Omega$. \end{itemize}
These distributions induce the following types of tangent vectors in each tangent space. A vector $v  \in T_qM$ will be called:

\begin{itemize}
\item[i)] \textbf{Horizontal} if $v \in \mathcal{H}_q$;
\item[ii)] \textbf{Vertical} if $v \in \mathcal{V}_q$.
\end{itemize}
In Darboux coordinates, these distributions are locally generated as follows: $$\mathcal{H} = \langle \frac{\partial}{\partial q^i}, \frac{\partial}{\partial p_i} \rangle; \,\, \mathcal{V} = \langle \frac{\partial}{\partial t} \rangle.$$

Just as before, we can define a bundle isomorphism between the tangent and cotangent bundles:
$$\flat_{\theta,\Omega}: TM \rightarrow T^*M;\,\,\, v_q \mapsto \flat_{\theta, \Omega}(v_q) = i_{v_q}\Omega +
\theta(v_q)\cdot \theta.$$
Its inverse is denoted by $\sharp_{\theta, \Omega}$.

The vector field defined as $\mathcal{R} := \sharp_{\theta, \Omega}(\theta)$ is called the \textbf{Reeb vector field}. The Reeb vector field is locally given by $$\mathcal{R}  = \frac{\partial}{\partial t}.$$

Let $H$ be a differentiable function on $M$. We define the following vector fields
\begin{itemize}
\item[i)] The \textbf{gradient vector field} $\grad H := \sharp_{\theta, \Omega} (dH)$;
\item[ii)] The \textbf{Hamiltonian vector field} $X_H := \grad H - \mathcal{R}(H) \mathcal{R}$;
\item[iii)] The \textbf{evolution vector field} $\mathcal{E}_H := X_H + \mathcal{R}.$
\end{itemize}

These vector fields have the local expressions:
\begin{align*}
\grad H &= \partder{H}{p_i} \partder{}{q^i} - \partder{H}{q^i} \partder{}{p_i} + \partder{H}{t} \partder{}{t},\\
X_H &=  \partder{H}{p_i} \partder{}{q^i} - \partder{H}{q^i} \partder{}{p_i}, \\
\mathcal{E}_H &=  \partder{H}{p_i} \partder{}{q^i} - \partder{H}{q^i} \partder{}{p_i} + \partder{}{t}.
\end{align*}
Notice that an integral curve $(q^i(\lambda), p_i(\lambda), t(\lambda)$ of the evolution vector field satisfies the following differential equations
\begin{align*}
    \totder{q^i}{ \lambda} &= \partder{H}{p_i},\\
    \totder{p_i}{\lambda} &= -\partder{H}{q^i},\\
    \totder{t}{\lambda} &= 1,
\end{align*}
which immediately give time-dependent Hamilton's equations
\begin{align*}
    \totder{q^i}{t} &= \partder{H}{p_i},\\
    \totder{p_i}{t} &= -\partder{H}{q^i}
\end{align*}
since we have $t=\lambda+const.$

Notice that the horizontal distribution $\mathcal{H}$ is the distribution generated by all Hamiltonian vector fields. Just as in the symplectic case, we can define a Poisson bracket:\\

\begin{Def}[Poisson bracket]Let $\{\cdot, \cdot \}$ be the bracket in the
ring $\mathcal{C}^{\infty}(M)$ given by $$\{f,g\} := \Omega(X_f,X_g).$$ \end{Def} 

We can easily check that this is indeed a Poisson structure by observing that in coordinates it is given by $$\{f,g\} = \frac{\partial f}{\partial q^i}\frac{\partial g}{\partial p_i}- \frac{\partial f}{\partial p_i} \frac{\partial g}{\partial q^i}.$$ Thus, the coordinate expression of the Poisson tensor is $$\Lambda = \frac{\partial}{\partial q^i} \wedge \frac{\partial}{\partial p_i}.$$

 So, the Hamiltonian vector fields coincide with the ones provided by this induced Poisson structure, following the notions and results given in \textcolor{red}{Section \ref{Poisson}}. In particular, given $\Delta_q \subseteq T_qM$,  we have $$\Delta_q^{\perp_\Lambda} = \sharp_{\Lambda}(\Delta_q^0).$$

Note that $\ker \sharp_\Lambda = \langle \theta \rangle $ and that $\im \sharp_\Lambda = \mathcal{H},$ that is, $\mathcal{H}$ is the characteristic 
distribution of the Poisson structure induced by $(\theta, \Omega)$. This implies the following result:\\
\begin{prop} $i: L \rightarrow M$ is a Lagrangian submanifold if and only if $$T_qL^{\perp_\Lambda} = T_qL  \cap \mathcal{H}_q$$ for every $q \in L.$
\end{prop}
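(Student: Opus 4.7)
The plan is to reduce everything to the symplectic-leaf characterization of Lagrangian subspaces given in Section \ref{Poisson}, and then verify an algebraic identity inside one leaf. In the cosymplectic case the characteristic distribution is $\mathcal{S} = \mathcal{H} = \ker \theta$, and each leaf carries the induced symplectic form $\Omega|_{\mathcal{H}}$; by the equivalent formulation recorded in Section \ref{Poisson}, $L$ is Lagrangian iff $T_qL \cap \mathcal{H}_q$ is a Lagrangian subspace of the symplectic vector space $(\mathcal{H}_q, \Omega|_{\mathcal{H}_q})$. The proposition therefore reduces to proving the identity
$$ (T_qL \cap \mathcal{H}_q)^{\perp_\Omega} \;=\; T_qL^{\perp_\Lambda}, $$
where the orthogonal on the left is taken inside the symplectic vector space $\mathcal{H}_q$. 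Once this is established, $T_qL \cap \mathcal{H}_q$ being Lagrangian in $\mathcal{H}_q$ reads precisely as $T_qL \cap \mathcal{H}_q = (T_qL \cap \mathcal{H}_q)^{\perp_\Omega} = T_qL^{\perp_\Lambda}$, which is the claim.

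Before proving the identity I would note that both sides live in $\mathcal{H}_q$: the left by definition, and the right because $T_qL^{\perp_\Lambda} = \sharp_\Lambda((T_qL)^0) \subseteq \im \sharp_\Lambda = \mathcal{H}_q$. The computation then hinges on the key local identity
$$ \Omega(\sharp_\Lambda \alpha, w) \;=\; -\alpha(w) \qquad \text{for every } \alpha \in T_q^*M \text{ and every } w \in \mathcal{H}_q, $$
which one reads off from the Darboux expressions $\Lambda = \partial/\partial q^i \wedge \partial/\partial p_i$ and $\Omega = dq^i \wedge dp_i$ together with $\theta(w) = 0$. The inclusion $T_qL^{\perp_\Lambda} \subseteq (T_qL \cap \mathcal{H}_q)^{\perp_\Omega}$ is then immediate: if $v = \sharp_\Lambda \alpha$ with $\alpha|_{T_qL} = 0$ and $w \in T_qL \cap \mathcal{H}_q$, the identity yields $\Omega(v,w) = -\alpha(w) = 0$.

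The reverse inclusion is the only genuine step. Given $v \in \mathcal{H}_q$ with $\Omega(v, \cdot)$ vanishing on $T_qL \cap \mathcal{H}_q$, I need to produce a covector in $(T_qL)^0$ mapping to $v$ under $\sharp_\Lambda$. Any preimage of $v$ has the form $\beta + c\,\theta_q$ since $\ker \sharp_\Lambda = \langle \theta_q \rangle$, so the question is whether the scalar $c$ can be chosen to make the result vanish on $T_qL$. If $T_qL \subseteq \mathcal{H}_q$, then $\theta_q|_{T_qL} = 0$ and the key identity forces $\beta(w) = -\Omega(v,w) = 0$ on $T_qL = T_qL \cap \mathcal{H}_q$, so any $c$ works. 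Otherwise pick $w_0 \in T_qL$ with $\theta_q(w_0) = 1$, set $c := -\beta(w_0)$, and decompose an arbitrary $w \in T_qL$ as $w = (w - \theta_q(w) w_0) + \theta_q(w) w_0$; the first summand lies in $T_qL \cap \mathcal{H}_q$ and is handled by the key identity, while the second is killed by construction.

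The main obstacle will be this Reeb-direction adjustment, which is where the non-symplectic nature of $M$ genuinely enters; once the identity $\Omega(\sharp_\Lambda \alpha, w) = -\alpha(w)$ on $\mathcal{H}$ has been isolated, the rest is routine linear algebra in a single fibre.
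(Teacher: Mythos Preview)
Your proof is correct. The paper treats this proposition as a direct specialization of the general Poisson definition once $\mathcal{S} = \mathcal{H}$ is known; its proof is a single sentence invoking Section~\ref{Poisson}. You instead make explicit the linear-algebraic content behind that specialization, namely the identity $(T_qL \cap \mathcal{H}_q)^{\perp_{\Omega|_\mathcal{H}}} = T_qL^{\perp_\Lambda}$, which is precisely the paper's later Proposition~\ref{CosymplecticOrthognal}. Your proof of that identity also differs from the paper's: there one inclusion is verified and the argument is closed by a dimension count, splitting on whether $\theta_q \in \Delta_q^0$; you instead prove the reverse inclusion constructively, adjusting an arbitrary $\sharp_\Lambda$-preimage by a multiple of $\theta_q$ so that it lands in $(T_qL)^0$. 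Your route is more self-contained and makes the role of $\ker \sharp_\Lambda = \langle \theta \rangle$ transparent; the paper's dimension count is shorter but hides this mechanism.
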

\begin{proof} It follows from the definition of Lagrangian submanifold (\textcolor{red}{Section \ref{Poisson}}) and the fact that $\mathcal{H}$ is the characteristic distribution on $M$.
\end{proof}
It is also easy to see that $$\Lambda(\alpha, \beta) = \Omega(\sharp_{\theta, \Omega}(\alpha),\sharp_{\theta, \Omega}( \beta) )$$ observing that we have $\Omega(X_f,X_g) = \Omega(\grad f, \grad g)$. \\

\subsection[Hamiltonian vector fields as Lagrangian submanifolds]{Gradient, Hamiltonian and evolution vector fields as Lagrangian submanifolds}
\begin{Def} Given a cosymplectic manifold $(M, \Omega, \theta)$, we define the symplectic structure on $TM$ as $\Omega_0 := - d\lambda_0$, where $\lambda_0 = \flat_{\theta,\Omega}^*\lambda_M$, $\lambda_M$ being the Liouville $1$-form in the cotangent bundle $T^*M$.
\end{Def}

There is another expression of $\Omega_0$, namely $$\Omega_0 = -\Omega^c -\theta^c \wedge \theta^v$$ as one can verify \cites{cantrijn1992gradient}. Here, $\alpha^v, \alpha^c$ denote the complete and vertical lifts of a form $\alpha$ on $M$ to its tangent bundle $TM$ \cites{de2011methods}. This implies that in the induced coordinates in $TM$, $(q^i,p_i, t, \dot{q}^i, \dot{p}_i, \dot{t})$, $$\Omega_0 = -dq^i \wedge d\dot{p}_i - d\dot{q}^i \wedge dp_i - d \dot{t} \wedge dt.$$

\begin{prop} Let $(M, \Omega, \theta)$ be a cosymplectic manifold and $X: M \rightarrow TM$ a vector field. Then $X(M)$ is a Lagrangian submanifold of $(TM, \Omega_0)$ if and only if $X$ is locally a gradient vector field.
\end{prop}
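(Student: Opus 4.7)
The plan is to mirror the argument given in the symplectic case almost verbatim, exploiting the fact that the only ingredient we really used there was the Liouville one-form on $T^*M$ together with a pointwise musical isomorphism $TM \to T^*M$. Here that isomorphism is $\flat_{\theta,\Omega}$, and $\Omega_0 = -d\lambda_0$ with $\lambda_0 = \flat_{\theta,\Omega}^*\lambda_M$, so the structural setup is identical.

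First I would prove a cosymplectic analog of Proposition \ref{Pullbackvectorfields}: for any vector field $X : M \to TM$,
\begin{equation*}
X^*\lambda_0 \;=\; \flat_{\theta,\Omega}(X) \;=\; i_X\Omega + \theta(X)\,\theta.
\end{equation*}
The computation is exactly the one in the symplectic case: $X^*\lambda_0 = X^*\flat_{\theta,\Omega}^*\lambda_M = (\flat_{\theta,\Omega}\circ X)^*\lambda_M$, and because $\flat_{\theta,\Omega}\circ X$ is a one-form on $M$, its pullback of $\lambda_M$ equals itself by the defining property of the Liouville form. Differentiating then gives
\begin{equation*}
X^*\Omega_0 \;=\; -\,d(X^*\lambda_0) \;=\; -\,d\bigl(\flat_{\theta,\Omega}(X)\bigr).
\end{equation*}

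Next I would observe that $\dim X(M) = \dim M = 2n+1 = \tfrac{1}{2}\dim TM$, so by Lemma \ref{CharacterizationLagrangianSympelctic} (applied to $(TM,\Omega_0)$) the submanifold $X(M)$ is Lagrangian if and only if $X^*\Omega_0 = 0$. By the formula just derived, this is equivalent to $\flat_{\theta,\Omega}(X)$ being closed.

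Finally, recall that $X = \grad H$ means precisely $\flat_{\theta,\Omega}(X) = dH$, i.e.\ that $\flat_{\theta,\Omega}(X)$ is exact. Hence if $X$ is locally a gradient vector field then $\flat_{\theta,\Omega}(X)$ is locally exact, so closed, so $X(M)$ is Lagrangian. Conversely, if $X(M)$ is Lagrangian then $\flat_{\theta,\Omega}(X)$ is closed, and by the Poincaré lemma, around each point of $M$ there is a function $H$ with $\flat_{\theta,\Omega}(X) = dH$; applying $\sharp_{\theta,\Omega}$ yields $X = \grad H$ locally. There is no real obstacle here beyond being careful that the pullback identity $\alpha^*\lambda_M = \alpha$ is what does the work; once that is in hand the equivalence is a purely formal consequence of the Poincaré lemma, exactly parallel to the symplectic case.
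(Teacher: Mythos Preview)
Your proof is correct and follows essentially the same route as the paper: establish $X^*\lambda_0 = \flat_{\theta,\Omega}(X)$ exactly as in Proposition~\ref{Pullbackvectorfields}, differentiate to get $X^*\Omega_0 = -d\flat_{\theta,\Omega}(X)$, and then use the dimension count plus the Poincar\'e lemma. The paper's version is terser but identical in substance, and it follows up with the same coordinate verification you could add if desired.
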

\begin{proof}It is easily checked that $X^*\lambda_0 = \flat_{\theta,\Omega}(X)$ (just like in \textcolor{red}{Proposition \ref{Pullbackvectorfields}}) and then, $X(M)$ is Lagrangian if and only if $$0 = X^*\Omega_0 = -X^* d\lambda_0= -   d\flat_{\theta,\Omega}(X),$$ that is, $X$ is locally a gradient vector field.
\end{proof}

We can also check this in coordinates. Indeed, let $$X = X^i \partder{}{q^i} + Y_i \partder{}{p_i} + Z \partder{}{t}$$ be a vector field on $M$. $X: M \hookrightarrow TM$ defines a Lagrangian submanifold if and only if $X^*\Omega_0 = 0.$ An easy calculation gives 

\begin{align*}
    -X^*\Omega_0 =& \left ( \partder{X^i}{q^j} + \partder{Y_j}{p_i}\right ) dq^j \wedge dp_i +   \left ( \partder{X^i}{t} - \partder{Z}{p_i}\right ) dt \wedge dp_i +  \left ( \partder{Y_i}{t} + \partder{Z}{q^i}\right ) dq^i \wedge dt\\
   &  \partder{X^i}{p_j}  dp_j \wedge dp_i +  \partder{Y_i}{q^j} dq^j \wedge dq^i.
\end{align*}

Therefore, $X$ defines a Lagrangian submanifold of $(TM, \Omega_0)$ if and only if 

\begin{align}
    \label{1} \partder{X^i}{q^j} + \partder{Y_j}{p_i} = 0, \\ \label{2}\partder{X^i}{t} - \partder{Z}{p_i} = 0,\\ \label{3}\partder{Y_i}{t} + \partder{Z}{q^i} = 0, \\
    \label{4}\partder{X^i}{p_j} - \partder{X^j}{p_i} = 0, \\ \label{5}\partder{Y_i}{q^j} - \partder{Y_j}{q^i} = 0.
\end{align}

The equations above can be summarized by taking $$(G^1, \dots, G^n, G^{n+1} , \dots, G^{2n}, G^{2n +1}) := (X^i, - Y_i, Z),$$ $$(x^1, \dots, x^n, x^{n+1}, \dots, x^{2n}, x^{2n +1}) := (q^i, p_i, t),$$ since they translate to $$\partder{G^i}{x^j} = \partder{G^j}{x_i}.$$ We conclude that $G^i =  \displaystyle \partder{H}{x^i}$, for some local function $H$, that is, locally, $X = \grad H$.\\

In general, the Hamiltonian and evolution vector field do not define Lagrangian submanifolds in $(TM,\Omega_0)$. However, modifying the form we can achieve this. First, let us study the Hamiltonian vector field $X_H$. We have $$X_H^* \Omega_0 = (\grad H - \mathcal{R}(H) \mathcal{R}) ^* \Omega_0 = - d(\mathcal{R}(H) \theta) = - d(\mathcal{R}(H)) \wedge \theta.$$
The form defined as 
$$\Omega_H := \Omega_0 + \left( d\mathcal{R}(H) \wedge \theta \right)^v
$$ 
is a symplectic form and has the local expression  
$$
\Omega_H = - dq \wedge d\dot{p}_i - d\dot{q}^i \wedge dp_i - d\dot{t} \wedge dt + d\left( \partder{H}{t} \right) \wedge dt.
$$ 
Also,  $$X_H^* \Omega_H =  - d\mathcal{R}(H) \wedge \theta +  d(\mathcal{R}(H)) \wedge \theta = 0.$$ 
We have proved that $X_H$ defines a Lagrangian submanifold of $(TM, \Omega_H).$ Furthermore, since $$\mathcal{R} ^* \Omega_0 = 0,$$ it follows that the evolution vector field $\mathcal{E}_H$ also defines a Lagrangian submanifold of $(TM , \Omega_H)$. \\

This also gives a way of interpreting both vector fields as Lagrangian submanifolds of a the cosymplectic submanifold $(TM \times \mathbb{R}, \Omega_H, ds),$ taking the coordinate in $\mathbb{R}$ to be constant.

\subsection{Coisotropic reduction}

We can interpret the orthogonal complement defined by the Poisson structure using the cosymplectic structure. We note that $\Omega|_\mathcal{H}$ defined as $\Omega$ restricted to the distribution $\mathcal{H}$ induces a symplectic vector space in each $\mathcal{H}_q$ and thus we have a symplectic vector bunde $\mathcal{H} \rightarrow M$. If $\Delta_q \subseteq \mathcal{H}_q$, we have the $\Omega|_\mathcal{H}$-orthogonal complement $$(\Delta_q)^{\perp_{\Omega|_\mathcal{H}}} = \{ v \in \mathcal{H} \,\, | \,\, \Omega(v,w) = 0,\,\, \forall w \in \Delta_q\}.$$

\begin{prop}\label{CosymplecticOrthognal}Let $\Delta_q \subseteq T_qM$. Then $\Delta_q^{\perp_\Lambda} = (\Delta_q\cap \mathcal{H})^{\perp_{\Omega|_{\mathcal{H}}}}.$
\end{prop}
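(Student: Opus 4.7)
The plan is to prove the inclusion $\Delta_q^{\perp_\Lambda} \subseteq (\Delta_q \cap \mathcal{H})^{\perp_{\Omega|_{\mathcal{H}}}}$ directly from an explicit formula for $\sharp_\Lambda$, and then close the argument by matching dimensions.

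First, I would pin down $\sharp_\Lambda$ in terms of the cosymplectic isomorphism $\sharp_{\theta,\Omega}$. Combining the identity $\Lambda(\alpha,\beta) = \Omega(\sharp_{\theta,\Omega}(\alpha), \sharp_{\theta,\Omega}(\beta))$ recorded at the end of the previous subsection with the defining equation $\flat_{\theta,\Omega}(v) = i_v\Omega + \theta(v)\,\theta$, a short verification (using $\theta(\sharp_{\theta,\Omega}(\alpha)) = \alpha(\mathcal{R})$) shows
\[
\sharp_\Lambda(\alpha) \;=\; \sharp_{\theta,\Omega}(\alpha) - \alpha(\mathcal{R})\,\mathcal{R},
\]
i.e. $\sharp_\Lambda(\alpha)$ is exactly the horizontal component of $\sharp_{\theta,\Omega}(\alpha)$. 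This is consistent with the already noted facts $\im \sharp_\Lambda = \mathcal{H}$ and $\ker \sharp_\Lambda = \langle \theta \rangle$.

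For the forward inclusion, I would take $\alpha \in \Delta_q^0$ and $w \in \Delta_q \cap \mathcal{H}$, and compute $\Omega(\sharp_\Lambda(\alpha), w)$. The Reeb contribution drops out because $i_\mathcal{R}\Omega = 0$, while pairing $\flat_{\theta,\Omega}(\sharp_{\theta,\Omega}(\alpha)) = \alpha$ against the horizontal vector $w$ (where $\theta(w)=0$) gives $\Omega(\sharp_{\theta,\Omega}(\alpha), w) = \alpha(w)$. Since $\alpha \in \Delta_q^0$ and $w \in \Delta_q$, this vanishes; hence $\sharp_\Lambda(\alpha) \in (\Delta_q \cap \mathcal{H})^{\perp_{\Omega|_{\mathcal{H}}}}$.

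To upgrade the inclusion to an equality, I would match dimensions, letting $\varepsilon := \dim(\Delta_q^0 \cap \langle \theta \rangle) \in \{0,1\}$. Since $\theta \in \Delta_q^0$ precisely when $\Delta_q \subseteq \mathcal{H}$, one has $\dim(\Delta_q \cap \mathcal{H}) = \dim \Delta_q - (1-\varepsilon)$, and so, using that $(\mathcal{H}, \Omega|_{\mathcal{H}})$ is symplectic of dimension $2n$, the right-hand side has dimension $2n + 1 - \dim\Delta_q - \varepsilon$. On the other hand, rank-nullity applied to $\sharp_\Lambda|_{\Delta_q^0}$ together with $\dim \Delta_q^0 = 2n+1-\dim\Delta_q$ and $\ker \sharp_\Lambda = \langle \theta \rangle$ gives $\dim \sharp_\Lambda(\Delta_q^0) = (2n+1-\dim\Delta_q) - \varepsilon$, matching. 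The main subtlety is purely bookkeeping: carrying the parameter $\varepsilon$ consistently through both computations, so that the degenerate direction $\langle\theta\rangle$ of $\sharp_\Lambda$ is accounted for on the annihilator side exactly as the missing horizontal direction of $\Delta_q$ is accounted for on the symplectic-orthogonal side.
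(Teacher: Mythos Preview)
Your proposal is correct and follows essentially the same strategy as the paper's own proof: establish the inclusion $\Delta_q^{\perp_\Lambda} \subseteq (\Delta_q \cap \mathcal{H})^{\perp_{\Omega|_\mathcal{H}}}$ by a direct computation and then close by a dimension count, splitting on whether $\theta \in \Delta_q^0$ (which you package uniformly via your parameter $\varepsilon$, whereas the paper treats the two cases separately). One minor caveat: with the paper's conventions the explicit formula actually reads $\sharp_\Lambda(\alpha) = -\bigl(\sharp_{\theta,\Omega}(\alpha) - \alpha(\mathcal{R})\,\mathcal{R}\bigr)$, but this sign is immaterial to both the inclusion argument and the dimension count.
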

\begin{proof} Let $v \in \Delta_q^{\perp_\Lambda}$, that is, $v = \sharp_\Lambda(\alpha)$ with $\alpha \in \Delta_q^0$. This implies that $v$ is horizontal. We only need to check that $\Omega(v,w) = 0$ for every $w \in \Delta_q \cap \mathcal{H}_q$. Indeed, since $\theta(w) = 0$,
\begin{align*} 
\Omega(\sharp_\Lambda\alpha, w) &= - \Omega(w, \sharp_\Lambda\alpha) - \theta(w)\theta(\sharp_\Lambda \alpha) = -(\flat_{\theta, \Omega}w)(\sharp_\Lambda \alpha) = - \Lambda(\alpha, \flat_{\theta, \Omega} w)  \\
&= - \Omega(\sharp_{\theta, \Omega} \alpha, w)  =  - \Omega(\sharp_{\theta, \Omega} \alpha, w)  - \theta(\sharp_{\theta, \Omega}\alpha)\theta( w) = - \alpha(w) = 0.
\end{align*}
Now we compare dimensions. We distinguish two cases, if $\theta \in \Delta_q^0$, we have $$\dim \Delta_q^{\perp_\Lambda} = \dim \Delta_q^0 - 1 = 2n - \dim \Delta_q$$ which is exactly $\dim(\Delta_q \cap \mathcal{H}_q)^{\perp_{\Omega|_\mathcal{H}}}$, for $\Delta_q \subseteq \mathcal{H}_q$ and $(\mathcal{H}_q, \Omega |_\mathcal{H})$ is symplectic. Now, if $\theta \not \in \Delta_q^0$, then $$\dim \Delta_q^{\perp_\Lambda} = 2n +1 - \dim \Delta_q$$ and, since $\Delta_q \not \subseteq \mathcal{H}_q$, we have $\dim (\Delta_q \cap \mathcal{H}_q) = \dim \Delta_q - 1$ which implies that $\dim (\Delta_q \cap \mathcal{H}_q)^{\perp_{\Omega|_\mathcal{H}}} = 2n + 1 - \dim \Delta. $
\end{proof}

This last proposition clarifies the situation. The $\Lambda$-orthogonal of a subspace $\Delta$ is just the symplectic orthogonal of the intersection with the symplectic leaf. This means that coisotropic reduction in cosymplectic geometry will be performed in each leaf of the characteristic distribution $\mathcal{H}$. Also, because the $\Lambda$-orthogonal complement is just the symplectic complement of the intersection with $\mathcal{H}$, we have the following properties:
\begin{itemize}
\item[$i)$] $(\Delta_1 \cap \Delta_2)^{\perp_\Lambda} = \Delta_1^{\perp_\Lambda} + \Delta_2^{\perp_\Lambda}.$
\item[$ii)$] $(\Delta_1 + \Delta_2)^{\perp_\Lambda} = \Delta_1^{\perp_\Lambda} \cap \Delta_2^{\perp_\Lambda}.$
\item[$iii)$] $(\Delta^{\perp_\Lambda})^{\perp_\Lambda} = \Delta \cap \mathcal{H}.$
\end{itemize}

It will also be important to distinguish submanifolds $N \hookrightarrow M$ acording to the position relative to the distributions $\mathcal{H},\mathcal{V}$.\\
\begin{Def}[Horizontal, non-horizontal and vertical submanifolds] Let $i: N \hookrightarrow M$ be a submanifold. $N$ will be called a:
\begin{itemize}
\item[$i)$] \textbf{Horizontal submanifold} if $T_qN \subseteq \mathcal{H}_q$ for every $q \in N$;
\item[$ii)$] \textbf{Non-horizontal submanifold} if $T_qN \not \subseteq \mathcal{H}_q$ for every $q \in N$;
\item[$iii)$] \textbf{Vertical submanifold} if the Reeb vector field is tangent to $N$, that is, $\mathcal{R}(q) \in T_qN$ for every $q \in N$.\\
\end{itemize}

\begin{remark} {\rm Note that if $N\hookrightarrow M$ is a vertical submanifold, then $N$ is non-horizontal.}  
\end{remark}
\end{Def}

Lagrangian submanifolds are characterized as follows:\\
\begin{lemma}\label{PropertiesLagrangianCosympelctic}Let $L\hookrightarrow M$ be a Lagrangian submanifold and $q \in L$. Then
\begin{itemize}
\item[i)] If $T_qL \subseteq \mathcal{H}_q$, $\dim T_qL^{\perp_\Lambda} = \dim M - \dim L -1$.
\item[ii)] If $T_qL \not \subseteq \mathcal{H}_q$, $\dim T_qL^{\perp_\Lambda} = \dim M - \dim L$.
\end{itemize}
and, in either case, $\dim T_qL^{\perp_\Lambda} = n$, where $\dim M = 2n +1$.
\end{lemma}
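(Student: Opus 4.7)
The plan is to leverage Proposition \ref{CosymplecticOrthognal} together with the preceding characterization of Lagrangian submanifolds, which says that $L$ is Lagrangian precisely when $T_qL^{\perp_\Lambda} = T_qL \cap \mathcal{H}_q$ at every point. Combining this equality with the identity $T_qL^{\perp_\Lambda} = (T_qL \cap \mathcal{H}_q)^{\perp_{\Omega|_\mathcal{H}}}$ forces $T_qL \cap \mathcal{H}_q$ to be a Lagrangian subspace of the symplectic vector space $(\mathcal{H}_q, \Omega|_\mathcal{H}_q)$, which has dimension $2n$. Hence the key numerical input is
\[
\dim(T_qL \cap \mathcal{H}_q) = n.
\]

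With this in hand, the two cases become straightforward bookkeeping. In case (i), where $T_qL \subseteq \mathcal{H}_q$, the intersection $T_qL \cap \mathcal{H}_q$ equals $T_qL$ itself, so $\dim L = n$. Then I would invoke the dimension formula obtained inside the proof of Proposition \ref{CosymplecticOrthognal} for the sub-case $\theta \in (T_qL)^0$, namely $\dim T_qL^{\perp_\Lambda} = 2n - \dim T_qL$, to conclude $\dim T_qL^{\perp_\Lambda} = 2n - n = n$, which also matches $\dim M - \dim L - 1$.

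In case (ii), where $T_qL \not\subseteq \mathcal{H}_q$, the linear functional $\theta$ does not vanish identically on $T_qL$, so $T_qL \cap \mathcal{H}_q$ has codimension $1$ in $T_qL$. Combined with the Lagrangian identity $\dim(T_qL \cap \mathcal{H}_q) = n$, this gives $\dim L = n+1$. Applying the other dimension formula from Proposition \ref{CosymplecticOrthognal}, $\dim T_qL^{\perp_\Lambda} = 2n + 1 - \dim T_qL$, yields $\dim T_qL^{\perp_\Lambda} = 2n + 1 - (n+1) = n$, in agreement with $\dim M - \dim L$.

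There is no real obstacle here; the only thing to be careful about is not confusing the two notions of orthogonal ($\perp_\Lambda$ versus $\perp_{\Omega|_\mathcal{H}}$) and correctly distinguishing the two cases via whether $\theta$ annihilates $T_qL$ or not. Once the Lagrangian condition is translated into $\dim(T_qL \cap \mathcal{H}_q) = n$ via Proposition \ref{CosymplecticOrthognal}, both (i) and (ii), as well as the common value $n$, are immediate substitutions.
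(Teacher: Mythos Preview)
Your proposal is correct and follows essentially the same approach as the paper: both arguments hinge on the fact that $\ker\sharp_\Lambda = \langle\theta\rangle$ (so the dimension of $T_qL^{\perp_\Lambda}=\sharp_\Lambda(T_qL^0)$ depends only on whether $\theta$ lies in $T_qL^0$) together with the observation that $T_qL\cap\mathcal{H}_q$ is Lagrangian in the symplectic space $(\mathcal{H}_q,\Omega|_\mathcal{H})$. The only difference is organizational: the paper first computes the general dimension formulas in i) and ii) via rank--nullity (these hold for any submanifold, Lagrangian or not) and then invokes the Lagrangian hypothesis only for the common value $n$, whereas you use the Lagrangian condition up front to pin down $\dim L$ in each case and then quote the dimension formulas from the proof of Proposition~\ref{CosymplecticOrthognal}.
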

\begin{proof}\item[$i)$] Since $\theta \in T_qL^0$ we have $$\dim T_qL^{\perp_ \Lambda} = \dim \sharp_\Lambda(T_qL^0) = \dim M - \dim L - \dim (\ker \sharp_\Lambda \cap T_qL) = \dim M - \dim L - 1.$$
\item[$ii)$] It follows from the previous calculation using that $\theta \not \in T_qL^0$ because $$\dim(\ker\sharp_\Lambda \cap T_qL) = 0.$$
The proof of the equality $\dim T_qL^{\perp_\Lambda} = n$ is straightforward using that $T_qL \cap \mathcal{H}_q$ is a Lagrangian subspace of $(\mathcal{H}_q, \Omega|_\mathcal{H})$.
\end{proof}

\textcolor{red}{Lemma \ref{PropertiesLagrangianCosympelctic}} guarantees that either $\dim L = n$, in which case $L$ is horizontal, or $\dim L = n+1$, in which case $L$ is non-horizontal. We have the following useful characterization of Larangian submanifolds:\\
\begin{lemma} \label{CharacterizationLagrangianCosymplectic}Let $L \hookrightarrow M$ be a submanifold. We have
\begin{itemize}
\item[i)] If $\dim L = n$, then $L$ is Lagrangian if and only if $i^* \theta = 0, i^* \Omega = 0$.
\item[ii)] If $L$ is non-horizontal and $\dim L = n+1$, then $L$ is Lagrangian if and only if $i^*\Omega = 0$.
\end{itemize}
\end{lemma}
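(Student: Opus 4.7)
The plan is to reduce both characterizations to statements about the symplectic geometry inside the horizontal distribution $(\mathcal{H}_q, \Omega|_\mathcal{H})$, exploiting the Poisson-theoretic definition recalled in Section \ref{Poisson}: $L$ is Lagrangian if and only if $T_qL \cap \mathcal{H}_q$ is a Lagrangian subspace of $(\mathcal{H}_q, \Omega|_\mathcal{H})$ for every $q \in L$. In both cases the relevant intersection will turn out to have dimension $n = \frac{1}{2}\dim \mathcal{H}_q$, so ``Lagrangian'' reduces to ``isotropic'' there.

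For part (i), Lemma \ref{PropertiesLagrangianCosympelctic} forces an $n$-dimensional Lagrangian submanifold to be horizontal, so the forward direction immediately gives $T_qL \subseteq \mathcal{H}_q$, equivalently $i^*\theta = 0$. Then $T_qL \cap \mathcal{H}_q = T_qL$ is $n$-dimensional, and being Lagrangian in $\mathcal{H}_q$ is just the vanishing of $\Omega$ on $T_qL \times T_qL$, i.e.\ $i^*\Omega = 0$. The converse runs the argument backwards: $i^*\theta = 0$ forces horizontality, after which $i^*\Omega = 0$ yields an $n$-dimensional isotropic, hence Lagrangian, subspace of $\mathcal{H}_q$.

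For part (ii), I would begin with a dimension count. Since $L$ is non-horizontal and $\mathcal{H}_q$ has codimension one, $T_qL + \mathcal{H}_q = T_qM$, and hence $\dim(T_qL \cap \mathcal{H}_q) = (n+1) + 2n - (2n+1) = n$. Thus, as in part (i), the Lagrangianity of $L$ reduces to the vanishing of $\Omega$ on $T_qL \cap \mathcal{H}_q$. The implication $i^*\Omega = 0 \Rightarrow L$ Lagrangian is then immediate. For the converse, I would use the canonical splitting $TM = \mathcal{H} \oplus \langle \mathcal{R}\rangle$: writing $v = v_\mathcal{H} + \theta(v)\mathcal{R}$ and using $i_\mathcal{R}\Omega = 0$, one obtains $\Omega(v,w) = \Omega(v_\mathcal{H}, w_\mathcal{H})$ for all $v, w \in T_qL$.

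The main obstacle is turning this identity into $i^*\Omega = 0$: invoking the isotropy of $T_qL \cap \mathcal{H}_q$ requires the horizontal parts $v_\mathcal{H}, w_\mathcal{H}$ to lie in $T_qL$, which is equivalent to $\mathcal{R}(q) \in T_qL$, i.e.\ to $L$ being vertical. I therefore expect the author's proof of (ii) either to read ``non-horizontal'' in the stronger vertical sense, as is standard in the cosymplectic literature on $(n+1)$-dimensional Lagrangians, or to supply an additional argument showing that such a Lagrangian is automatically vertical.
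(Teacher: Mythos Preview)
Your argument is correct and, in fact, more detailed than the paper's, which reads in its entirety: ``Both assertions are proved by a comparison of dimensions.'' Your reduction to the symplectic linear algebra of $(\mathcal{H}_q,\Omega|_\mathcal{H})$ via Proposition~\ref{CosymplecticOrthognal} is exactly the comparison the authors have in mind, and it handles part (i) and the ``if'' direction of part (ii) cleanly.

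Your concern about the ``only if'' direction of (ii) is well-founded, and the paper supplies neither of the fixes you anticipate. The statement as written is not correct: in $M=\mathbb{R}^3$ with $\Omega=dq\wedge dp$, $\theta=dt$, the surface $L=\{p=q+t\}$ is non-horizontal of dimension $n+1=2$, and $T_qL\cap\mathcal{H}_q=\langle\partial_q+\partial_p\rangle$ is Lagrangian in $\mathcal{H}_q$, so $L$ is Lagrangian in the Poisson sense; yet $i^*\Omega(\partial_q+\partial_p,\partial_t+\partial_p)=1\neq 0$. Your diagnosis is precise: the step $\Omega(v,w)=\Omega(v_\mathcal{H},w_\mathcal{H})=0$ requires $v_\mathcal{H},w_\mathcal{H}\in T_qL$, i.e.\ $\mathcal{R}(q)\in T_qL$. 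The lemma is only used later in the ``if'' direction (to certify that projected submanifolds are Lagrangian), so the gap is harmless for the paper's applications, but the ``only if'' in (ii) should be read under the additional verticality hypothesis you identified.
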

\begin{proof} Both assertions are proved by a comparison of dimensions.
\end{proof}

\begin{prop} Let $ i:N \hookrightarrow M $ be a coisotropic submanifold. Then the
distribution $(TN)^{\perp_\Lambda}$ is involutive. \end{prop}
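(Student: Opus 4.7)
The plan is to mimic the symplectic proof, but working inside the horizontal distribution $\mathcal{H}=\ker\theta$, which is where the $\Lambda$-orthogonal lives by Proposition \ref{CosymplecticOrthognal}.

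First I would record two preliminary observations that make the whole computation clean. The coisotropy assumption together with Proposition \ref{CosymplecticOrthognal} gives the inclusion
\[
(TN)^{\perp_\Lambda} \;=\; (TN\cap \mathcal{H})^{\perp_{\Omega|_\mathcal{H}}} \;\subseteq\; TN\cap\mathcal{H},
\]
so any local sections $X,Y$ of $(TN)^{\perp_\Lambda}$ are simultaneously tangent to $N$ and horizontal. In particular $[X,Y]$ is automatically tangent to $N$. The second observation is that $\mathcal{H}$ itself is involutive: since $\theta$ is closed, for horizontal $X,Y$ one has $0=d\theta(X,Y)=-\theta([X,Y])$, hence $[X,Y]\in\mathcal{H}$. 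Combining the two, $[X,Y]\in TN\cap\mathcal{H}$, and it remains to show it is $\Omega|_\mathcal{H}$-orthogonal to every $Z\in\Gamma(TN\cap\mathcal{H})$.

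Next I would carry out the standard $d\Omega=0$ computation. For sections $X,Y$ of $(TN)^{\perp_\Lambda}$ and $Z$ of $TN\cap\mathcal{H}$, the Cartan-type identity for a closed $2$-form reads
\begin{align*}
0 \,=\, d\Omega(X,Y,Z) \,=\;& X\Omega(Y,Z)-Y\Omega(X,Z)+Z\Omega(X,Y)\\
&-\Omega([X,Y],Z)+\Omega([X,Z],Y)-\Omega([Y,Z],X).
\end{align*}
Because $Y\in(TN\cap\mathcal{H})^{\perp_{\Omega|_\mathcal{H}}}$ and $Z\in TN\cap\mathcal{H}$, the term $\Omega(Y,Z)$ vanishes; by the same token $\Omega(X,Z)=0$, and since $X$ also lies in $TN\cap\mathcal{H}$, $\Omega(X,Y)=0$. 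Hence the first row of the expansion is zero. For the bracket terms I use the two preliminary observations applied to the pairs $(X,Z)$ and $(Y,Z)$: both $X,Z$ and $Y,Z$ are tangent to $N$ and horizontal, so by involutivity of $\mathcal{H}$ and tangency to $N$, the brackets $[X,Z]$ and $[Y,Z]$ again land in $TN\cap\mathcal{H}$. Orthogonality of $Y$ and $X$ to $TN\cap\mathcal{H}$ therefore forces $\Omega([X,Z],Y)=0=\Omega([Y,Z],X)$.

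Feeding these vanishings back into the identity leaves $\Omega([X,Y],Z)=0$ for every $Z\in\Gamma(TN\cap\mathcal{H})$, which, together with $[X,Y]\in TN\cap\mathcal{H}$, is exactly the statement that $[X,Y]\in(TN\cap\mathcal{H})^{\perp_{\Omega|_\mathcal{H}}}=(TN)^{\perp_\Lambda}$.

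The only subtle step — and the place where one could be tempted to overreach — is realizing that membership in $(TN)^{\perp_\Lambda}$ only requires orthogonality against $TN\cap\mathcal{H}$, not against all of $TN$; attempting to test against arbitrary tangent vectors to $N$ would fail because vertical directions in $TN$ are not controlled by $\Omega$ (the Reeb direction is in the radical). Once one restricts correctly to horizontal test vectors via Proposition \ref{CosymplecticOrthognal}, the proof reduces to the familiar closed-$2$-form argument from the symplectic case.
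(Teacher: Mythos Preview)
Your proof is correct and follows essentially the same route as the paper's: first establish involutivity of $\mathcal{H}$ from $d\theta=0$, then use Proposition~\ref{CosymplecticOrthognal} to reduce the question to showing $\Omega([X,Y],Z)=0$ for $Z\in TN\cap\mathcal{H}$, and finally expand $d\Omega(X,Y,Z)=0$ exactly as in the symplectic case. The paper phrases things via the pullback $\Omega_0=i^*\Omega$ while you work with $\Omega$ directly, but since all vectors involved are tangent to $N$ this is a cosmetic difference.
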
 

\begin{proof} We start proving that $\mathcal{H}$ is an involutive distribution. Let $X,Y$ be vector fields tangent to $\mathcal{H}$. Since $\theta$ is closed we have 
\begin{align*}
0 &= (d\theta)(X,Y) = X(\theta(Y)) - Y(\theta(X)) - \theta([X,Y]) = - \theta([X,Y]),
\end{align*}
that is, $[X,Y]$ is tangent to $\mathcal{H}.$\\

 Denote $\Omega_0:= i^* \Omega$.
 Let $X,Y$ be vector fields in $N$ tangent to $(TN)^{\perp_\Lambda}$. Using \textcolor{red}{Proposition \ref{CosymplecticOrthognal}}, $[X,Y] \in (TN)^{\perp_\Lambda}$ if and only if $[X,Y] \in (TN \cap \mathcal{H})^{\perp_{\Omega |_\mathcal{H}}}$. In order to see this,  we take an arbitrary vector field $Z$ on $N$ tangent to $\mathcal{H}$ and check that $\Omega_0([X,Y],Z) = 0$. Because $\Omega$ is closed, we have
\begin{align*} 0 =& i^*( d\Omega)=  ( d\Omega_0)(X,Y,Z) = X(\Omega_0(Y,Z)) - Y(\Omega_0(X,Z)) + Z( \Omega_0(X,Y)) \\
&- \Omega_0([X,Y],Z) + \Omega_0([X,Z],Y) - \Omega_0([Y,Z],X) = - \Omega_0([X,Y],Z)\end{align*}
 where we have used that $X,Y,Z,[Y,Z], [X,Z]$ are horizontal (since $\mathcal{H}$ is involutive) and that $X,Y \in  (TN \cap \mathcal{H})^{\perp_{\Omega |_\mathcal{H}}}$. \end{proof}

\subsection{Vertical coisotropic reduction}
 We shall now study coisotropic reduction of a \textbf{vertical submanifold} $N \hookrightarrow M$. Let $q \in N$. We have $\dim (T_qN)^0 = \dim M - \dim N$. Since $N$ is vertical, $\theta \not \in (T_qN)^0$ and we have $$\dim (T_qN)^{\perp_\Lambda} = \dim \sharp_\Lambda(T_qN)^0 = \dim M - \dim N - \dim (\ker \sharp_\Lambda \cap (T_qN)^0) = \dim M - \dim N.$$ In particular, $(TN)^{\perp_\Lambda}$ is a regular distribution.\\

\begin{theorem}[Vertical coisotropic reduction  in the cosymplectic setting] \label{VerticalCosymplecticReduction} Let $(M, \Omega, \theta)$ be a cosymplectic manifold and $i:N \hookrightarrow M$ be an coisotropic vertical submanifold. Denote by $\mathcal{F}$ the maximal foliation of the involutive regular distribution $(TN)^{\perp_\Lambda}$. If the space of all leaves $N/ \mathcal{F}$ admits a manifold structure such that the projection $\pi : N \rightarrow N/ \mathcal{F}$ is a submersion, then there exist unique $\theta_N$, $\omega_N$ such that
\begin{align*} i^* \omega &= \pi^* \omega_N,\\ i^* \theta &= \pi^* \theta_N, \end{align*} and they define a cosymplectic structre on
$N / \mathcal{F}.$ The following diagram summarizes the situation:
\[ \begin{tikzcd} N \arrow[r, "i"] \arrow[d, "\pi"] & M \\ N / \mathcal{F} \end{tikzcd} \] \end{theorem}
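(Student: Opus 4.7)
The plan is to mirror the symplectic reduction proof of Theorem \ref{SymplecticReduction}, with the extra task of verifying that the reduced pair satisfies the cosymplectic non-degeneracy condition. First I would define the reduced forms pointwise by
\[
(\omega_N)_{[q]}([u],[v]) := \Omega_q(u,v), \qquad (\theta_N)_{[q]}([u]) := \theta_q(u),
\]
where $[u] = T_q\pi \cdot u$. Uniqueness is immediate: any forms on $N/\mathcal{F}$ satisfying the stated relation must be given by these formulas, since $\pi$ is a surjective submersion and so $\pi^*$ is injective on forms.

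Well-definedness reduces to showing both expressions vanish when $u \in (T_qN)^{\perp_\Lambda}$. For $\theta_N$, note that $(T_qN)^{\perp_\Lambda} \subseteq \im \sharp_\Lambda = \mathcal{H}_q$, so $\theta(u) = 0$. For $\omega_N$, Proposition \ref{CosymplecticOrthognal} gives $(T_qN)^{\perp_\Lambda} = (T_qN \cap \mathcal{H}_q)^{\perp_{\Omega|_{\mathcal{H}}}}$, so $\Omega(u,w) = 0$ for every horizontal $w \in T_qN$. Verticality of $N$ now enters crucially: since $\mathcal{R}_q \in T_qN$, every $v \in T_qN$ decomposes as $v = v_{\mathcal{H}} + \lambda \mathcal{R}_q$ with $v_{\mathcal{H}} \in T_qN \cap \mathcal{H}_q$, and then $\Omega(u,v) = \Omega(u,v_{\mathcal{H}}) + \lambda\, \Omega(u,\mathcal{R}_q) = 0$, using $\iota_{\mathcal{R}}\Omega = 0$. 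Smoothness and closedness then follow by the standard argument: for any vector field $X$ along $N$ tangent to $(TN)^{\perp_\Lambda}$, both $\iota_X(i^*\Omega)$ and $\iota_X(i^*\theta)$ vanish by the above, while $d(i^*\Omega) = d(i^*\theta) = 0$, so Cartan's formula yields $\mathcal{L}_X(i^*\Omega) = \mathcal{L}_X(i^*\theta) = 0$; closedness of $\omega_N$ and $\theta_N$ descends from that of $i^*\Omega$, $i^*\theta$ by the injectivity of $\pi^*$.

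The main obstacle, specific to the cosymplectic setting, is verifying the non-degeneracy condition $\theta_N \wedge \omega_N^{m} \neq 0$ on $N/\mathcal{F}$ where $\dim(N/\mathcal{F}) = 2m+1$. The key observation is to isolate the Reeb direction. Since $(T_qN)^{\perp_\Lambda}$ is horizontal but $\mathcal{R}_q$ is not, the class $[\mathcal{R}_q] \in T_{[q]}(N/\mathcal{F})$ is nonzero, and $\theta_N([\mathcal{R}_q]) = \theta(\mathcal{R}_q) = 1$, so $\theta_N$ is nowhere vanishing. To pin down the kernel of $\omega_N$, a class $[v]$ lies in it iff $\Omega(v,u) = 0$ for every $u \in T_qN$; writing $v = v_{\mathcal{H}} + \lambda \mathcal{R}_q$ as above, this forces $v_{\mathcal{H}} \in (T_qN \cap \mathcal{H}_q)^{\perp_{\Omega|_{\mathcal{H}}}} = (T_qN)^{\perp_\Lambda}$, whence $[v] = \lambda [\mathcal{R}_q]$. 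Thus $\ker \omega_N = \langle [\mathcal{R}] \rangle$, and since $\theta_N$ is nonzero on this line, $\ker \omega_N \cap \ker \theta_N = 0$, which is precisely the cosymplectic condition.
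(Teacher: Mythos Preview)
Your proof is correct and tracks the paper's argument closely: the pointwise definition of the reduced forms, well-definedness via Proposition~\ref{CosymplecticOrthognal} together with the decomposition $v = v_{\mathcal{H}} + \lambda\,\mathcal{R}_q$ afforded by verticality, and invariance along the leaves via Cartan's formula are all exactly as in the paper.

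The one genuine difference is the non-degeneracy check. The paper first computes $\dim N/\mathcal{F} = 2(k-n-1)+1$ explicitly and then exhibits vectors on which $\theta_0 \wedge \Omega_0^{k-n-1}$ does not vanish, using that $T_qN \cap \mathcal{H}_q$ is a coisotropic subspace of the symplectic space $(\mathcal{H}_q, \Omega|_{\mathcal{H}})$ to produce the required $2(k-n-1)$ horizontal vectors and then appending $\mathcal{R}(q)$. You instead identify $[\mathcal{R}]$ as the reduced Reeb direction and argue structurally that $\ker \omega_N = \langle [\mathcal{R}] \rangle$ with $\theta_N$ nonzero on it, which on an odd-dimensional space is equivalent to $\theta_N \wedge \omega_N^m \neq 0$. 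Your route is arguably cleaner and has the bonus of making the reduced Reeb vector field explicit; the paper's route is more concrete and yields the dimension of the quotient along the way.
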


 \begin{proof} Uniqueness is clear from the imposed relation. Denote $\Omega_0:= i^* \Omega$, $\theta_0 := i^*\theta$. We only need to verify that the following forms are closed, well defined and define a cosymplectic structure: \begin{align*} \Omega_N([u],[v]) := \Omega_0(u,v),\\
\theta_N([u]):= \theta_0(u), \end{align*} where $[u]:= T\pi(q) \cdot u \in T_{[q]}N/ \mathcal{F}$. If they were well defined, it is clear that 
they are smooth and closed since $\pi^* d \theta_N = d \theta_0 = 0$, $\pi^*\Omega_N =  d \Omega_0 = 0$ and $\pi$ is a submersion.\\ 

Let us first check that these definitions do not depend on the chosen representatives of the vectors. It suffices to observe that
for vectors in the distribution, say $v \in (T_qN)^{\perp_\Lambda}$, we have $i_v \Omega_0 = 0$ and $i_v \theta_0 = 0$. This easily follows from \textcolor{red}{Proposition \ref{CosymplecticOrthognal}} using that the horizontal proyection of every vector $u \in T_qN$ is tangent to $N$ (here we use the condition $\mathcal{R}(q) \in T_qN$). To see the independence of the point in the leaf chosen, it is enough to observe that $$\mathcal{L}_X \Omega_0 = 0; \,\,\, \mathcal{L}_X \theta_0 = 0 $$ for every vector field on $N$ tangent to the distribution $(T_qN)^{\perp_\Lambda}$ (since every two points in the same leaf of the foliation
can be joined by a finite union of flows of such fields).  Indeed, we have
\begin{align*}
\mathcal{L}_X\Omega_0 &= i_X d\Omega_0 + di_X \Omega_0 = 0,\\
\mathcal{L}_X \theta_0  &= i_X d\theta_0 + di_X\theta_0 = 0.
\end{align*}

Now we check that they define a cosymplectic structure. Assuming $k = \dim N$ and $2n +1 = \dim M$, from the remark above we have $$\dim N/\mathcal{F} = \dim N - \operatorname{rank}\, (TN)^{\perp_\Lambda} = 2k - 2n -1 = 2(k - n -1 ) + 1$$ and hence, $(N/\mathcal{F},\Omega_N,\theta_N)$ is a cosymplectic manifold if and only if $$\theta_N \wedge \Omega_N^{k-n-1}\neq 0,$$ which is equivalent to $\theta_0 \wedge \Omega_0^{k-n-1} \neq 0$, because $\pi$ is a submersion. For every point $q \in N$, $TqN$ can be decomposed in $$TqN =  TqN_\mathcal{H} \oplus \mathcal{V}$$ where $T_qN_\mathcal{H} = (T_qN) \cap \mathcal{H}_q$. It is easy to see that $(T_qN)_\mathcal{H}$ is a coisotropic subspace of $(\mathcal{H}_q, \Omega|_\mathcal{H})$. This implies (using symplectic reduction) that there are $\dim(T_qN)_\mathcal{H} - \dim (T_qN)_\mathcal{H}^{\perp_{\widetilde \Omega}} = k - 1 - (2n +1 -k) = 2k - 2n - 2$ horizontal vectors, say $u_1, \dots, u_{2k-2n-2}$ such that $\Omega_0^{k-n-1}(u_1, \dots, u_{2k-2n-2})\neq 0$. Taking the last vector to be $\mathcal{R}(q)$, it is clear that $$(\theta_0 \wedge \Omega_0^{k-n-1})(\mathcal{R}(q), u_1, \dots, u_{2k-2n-2}) \neq 0.$$ \end{proof}

\subsubsection{Projection of Lagrangian submanifolds}
Now we will prove that Lagrangian submanifolds $L \hookrightarrow M$ project to Lagrangian submanifolds in $N/\mathcal{F}.$\\

\begin{prop}[Projection of horizontal Lagrangian submanifolds is Lagrangian]\label{ProjectionHorizontalLagrangianCosymplectic} Under the hypotheses of \textcolor{red}{Theorem \ref{VerticalCosymplecticReduction}}, let $L \hookrightarrow M$ be an horizontal Lagrangian submanifold such that $L$ and $N$ have clean intersection. If $ L_N:=  \pi(L \cap N)$ is a submanifold of $N/ \mathcal{F}$, then $L_N$ is Lagrangian.
\end{prop}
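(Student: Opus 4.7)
The plan is to imitate the symplectic proof (Proposition \ref{LagrangianProjectionSymplectic}) but leverage Proposition \ref{CosymplecticOrthognal} so the whole dimension count happens inside the symplectic leaf $(\mathcal{H}_q, \Omega|_{\mathcal{H}})$, and then close by invoking Lemma \ref{CharacterizationLagrangianCosymplectic}. Writing $\dim M = 2n+1$ and $\dim N = k$, the reduced manifold has dimension $2(k-n-1)+1$, so its horizontal Lagrangians have dimension $n' := k-n-1$. I expect $L_N$ to be horizontal of this minimal dimension.

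First I would check horizontality and isotropy of $L_N$ at the level of forms. Since $L$ is horizontal Lagrangian of dimension $n$, Lemma \ref{CharacterizationLagrangianCosymplectic} gives $i_L^*\theta=0$ and $i_L^*\Omega=0$. Restricting to $L\cap N$ and using the defining relations $\pi^*\theta_N = i^*\theta$ and $\pi^*\Omega_N = i^*\Omega$ from Theorem \ref{VerticalCosymplecticReduction}, the pullback of both $\theta_N$ and $\Omega_N$ to $L\cap N$ vanishes; because $\pi$ is a submersion, this forces $i_{L_N}^*\theta_N=0$ and $i_{L_N}^*\Omega_N=0$. In particular $L_N$ is horizontal in $N/\mathcal{F}$.

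Next I would carry out the dimension count. Because $\ker d_q\pi = (T_qN)^{\perp_\Lambda}$, the kernel–rank formula applied to $\pi|_{L\cap N}$ yields
\begin{equation}\label{eqA}
\dim L_N = \dim(L\cap N) - \dim\bigl(T_qL\cap (T_qN)^{\perp_\Lambda}\bigr).
\end{equation}
By Proposition \ref{CosymplecticOrthognal} we have $(T_qN)^{\perp_\Lambda} = (T_qN\cap\mathcal{H}_q)^{\perp_{\Omega|_\mathcal{H}}}$, and since $L$ is horizontal the intersection $T_qL\cap T_qN$ equals $T_qL\cap (T_qN\cap\mathcal{H}_q)$. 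Inside the symplectic vector space $(\mathcal{H}_q,\Omega|_\mathcal{H})$, $T_qL$ is Lagrangian and $(T_qN)_\mathcal{H}:=T_qN\cap\mathcal{H}_q$ is coisotropic (of dimension $k-1$, because $N$ is vertical so $\mathcal{R}(q)\in T_qN$ supplies the one missing direction). The identity $(T_qL\cap (T_qN)_\mathcal{H})^{\perp_{\Omega|_\mathcal{H}}} = T_qL + (T_qN)_\mathcal{H}^{\perp_{\Omega|_\mathcal{H}}}$ gives
\begin{equation}\label{eqB}
\dim(T_qL\cap T_qN) + \dim\bigl(T_qL+(T_qN)^{\perp_\Lambda}\bigr) = 2n,
\end{equation}
and combining this with the standard dimension formula for $T_qL\cap (T_qN)^{\perp_\Lambda}$ (as in the proof of Proposition \ref{LagrangianProjectionSymplectic}) reduces \eqref{eqA} to $\dim L_N = k-n-1 = n'$.

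Finally, since $L_N$ is horizontal of the correct dimension $n'$ and the pullbacks of both $\theta_N$ and $\Omega_N$ vanish on it, Lemma \ref{CharacterizationLagrangianCosymplectic}(i) concludes that $L_N$ is Lagrangian. The only slightly delicate point I anticipate is bookkeeping the role of the Reeb direction: because $L$ is horizontal and $N$ is vertical, the intersection $L\cap N$ sits entirely inside $\mathcal{H}$, so all relevant dimension computations take place in the symplectic fibre $\mathcal{H}_q$ and no separate Reeb contribution appears in \eqref{eqA}–\eqref{eqB}.
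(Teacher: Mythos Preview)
Your proposal is correct and follows essentially the same route as the paper: show $L_N$ is horizontal and isotropic (the paper does this by a direct computation $\Omega_N([v],[w])=\Omega(v,w)=0$, you via pullback of the forms $\theta_N,\Omega_N$), then compute $\dim L_N = k-n-1$ and conclude by Lemma \ref{CharacterizationLagrangianCosymplectic}. Your dimension count is a mild streamlining of the paper's, since you pass once to the symplectic fibre $(\mathcal{H}_q,\Omega|_{\mathcal{H}})$ via Proposition \ref{CosymplecticOrthognal} and then run the argument of Proposition \ref{LagrangianProjectionSymplectic} there, whereas the paper carries out the count directly with $\Lambda$-orthogonals, using $(T_qL\cap (T_qN)^{\perp_\Lambda})^{\perp_\Lambda}$ and the identity $(T_qN^{\perp_\Lambda})^{\perp_\Lambda}=T_qN\cap\mathcal{H}_q$.
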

\begin{proof} Let $\mathcal{H}_N$ be the horizontal distribution in $N/\mathcal{F}$. It is clear that $L_N$ is horizontal, because $T_{[q]}L_N = T\pi(q)(T_qL \cap T_qN)$. Using \textcolor{red}{Proposition \ref{CosymplecticOrthognal}} we have $$T_{[q]}L_N^{\perp_{\Lambda_N}} = (T_{[q]}L_N \cap \mathcal{H}_ N)^{\perp_{\Omega_N|_{\mathcal{H}_N}}}= T_{[q]}L_N^{\perp_{\Omega_N|_{\mathcal{H}_N}}}.$$ We will check that $$T_{[q]}L_N \subseteq T_{[q]}L_N^{\perp_{\Lambda_N}}$$ and prove that $\dim L_N = \dim N- n - 1$, which with \textcolor{red}{Lemma \ref{CharacterizationLagrangianCosymplectic}} together with the calculation of the dimension of $N / \mathcal{F}$ done in \textcolor{red}{Theorem \ref{VerticalCosymplecticReduction}}, yields the result. \\

Let $[v], [w] \in T_{[q]}L_N$. Then $$\Omega_N([v],[w]) = \Omega(v,w) = 0,$$ since $L$ is Lagrangian. Because $[w]$ is arbitrary, this last calculation implies that $[v] \in T_{[q]}L_N^{\perp_{\Omega_N|_{\mathcal{H}_N}}} \subseteq T_{[q]}L_N^{\perp_{\Lambda_N}}$. Now, 
\begin{equation}\label{eq1cosymplectic}
\dim L_N = \dim (L \cap N) - \dim (T_qL \cap (T_qN)^{\perp_\Lambda}).
\end{equation}
Furthermore, since $L$ is Lagrangian and horizontal, $(T_qL \cap (T_qN)^{\perp_\Lambda})^{\perp_\Lambda} = T_qL \cap \mathcal{H}_q + (T_qN^{\perp_\Lambda})^{\perp_\Lambda} = T_qL + (T_qN^{\perp_\Lambda})^{\perp_\Lambda}$ and thus (using that $T_qL \cap (T_qN^{\perp_\Lambda})^{\perp_\Lambda}$ is necessarily horizontal),
\begin{equation*}
\dim (T_qL \cap (T_qN)^{\perp_\Lambda}) = \dim M - \dim(T_qL + (T_qN^{\perp_\Lambda})^{\perp_\Lambda}) - 1.
\end{equation*}
Since $\dim(T_qL\cap \mathcal{H}_q + (T_qN^{\perp_\Lambda})^{\perp_\Lambda}) = \dim(T_qL\cap \mathcal{H}_q + T_qN) - 1$ (which comes from the fact that $\dim (T_qN^{\perp_\Lambda})^{\perp_\Lambda} = \dim T_qN - 1$) we have
\begin{equation}\label{eq2cosymplectic}
\dim(T_qL \cap \mathcal{H}_q + T_qN^{\perp_\Lambda}) = \dim M - \dim(T_qL \cap T_qN^{\perp_\Lambda}).
\end{equation}
Substituting (\ref{eq2cosymplectic}) in (\ref{eq1cosymplectic}) and using $\dim L = n$, we conclude 
\begin{align*}
\dim L_N &= \dim (L \cap N) - \left ( \dim M - \dim(T_qL + T_qN)\right) \\
	&=  \dim (L \cap N) - \dim M   + \dim L + \dim N - \dim(L \cap N)\\
	&= -2n -1 + n + \dim N = \dim N - n - 1.
\end{align*}
\end{proof}

\begin{prop}[Projection of non-horizontal Lagrangian submanifold is Lagrangian] \label{ProjectionNonHorizontalLagrangian}  Under the hypotheses of \textcolor{red}{Theorem \ref{VerticalCosymplecticReduction}}, let $L \hookrightarrow M$ be a non-horizontal Lagragian submanifold. If $L$ and $N$ have clean intersection and $L_N:= \pi(L \cap N) \hookrightarrow N/\mathcal{F}$ is a submanifold, then $L_N$ is Lagrangian.
\end{prop}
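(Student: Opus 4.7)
The strategy parallels Proposition \ref{ProjectionHorizontalLagrangianCosymplectic}: verify that $L_N$ is isotropic, compute $\dim L_N$, and conclude via Lemma \ref{CharacterizationLagrangianCosymplectic}. The wrinkle absent from the horizontal case is that $T_qL\cap T_qN$ may contain a non-horizontal direction, so $L_N$ could land either as a horizontal Lagrangian (dimension $k-n-1$) or as a non-horizontal Lagrangian (dimension $k-n$), where $k=\dim N$ and $2n+1=\dim M$. The argument must cover both possibilities and invoke different branches of \textcolor{red}{Lemma \ref{CharacterizationLagrangianCosymplectic}} accordingly.

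Isotropy is immediate. For $[v],[w]\in T_{[q]}L_N$ with lifts $v,w\in T_qL\cap T_qN$, the relation $\pi^{\ast}\Omega_N=i^{\ast}\Omega$ from \textcolor{red}{Theorem \ref{VerticalCosymplecticReduction}} gives $\Omega_N([v],[w])=\Omega(v,w)=0$, using $i_L^{\ast}\Omega=0$ which holds by \textcolor{red}{Lemma \ref{CharacterizationLagrangianCosymplectic}(ii)} applied to the non-horizontal Lagrangian $L$.

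For the dimension count I would use $T_qL^{\perp_\Lambda}=T_qL\cap\mathcal{H}_q$ (valid for any non-horizontal Lagrangian, by the proposition preceding \textcolor{red}{Lemma \ref{PropertiesLagrangianCosympelctic}}), the identity $((T_qN)^{\perp_\Lambda})^{\perp_\Lambda}=T_qN\cap\mathcal{H}_q$, and the formula $\dim W^{\perp_\Lambda}=2n-\dim W$ whenever $W\subseteq\mathcal{H}_q$ (which follows from $\ker\sharp_\Lambda=\langle\theta\rangle$). Applying property (i) listed after \textcolor{red}{Proposition \ref{CosymplecticOrthognal}} to $W=T_qL\cap(T_qN)^{\perp_\Lambda}$ yields $W^{\perp_\Lambda}=T_qL^{\perp_\Lambda}+T_qN\cap\mathcal{H}_q$; combined with clean intersection and the rank formula $\dim L_N=\dim(L\cap N)-\dim W$, a direct simplification produces
\[
\dim L_N=(k-n-1)+\varepsilon,\qquad \varepsilon:=\dim(T_qL\cap T_qN)-\dim(T_qL\cap T_qN\cap\mathcal{H}_q),
\]
with $\varepsilon\in\{0,1\}$ since $\theta$ is a $1$-form; $\varepsilon=0$ precisely when $T_qL\cap T_qN\subseteq\mathcal{H}_q$.

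Two cases then finish the argument. If $\varepsilon=0$, then $L_N$ is horizontal of dimension $k-n-1$, so $i_{L_N}^{\ast}\theta_N=0$ automatically, and combined with $i_{L_N}^{\ast}\Omega_N=0$ \textcolor{red}{Lemma \ref{CharacterizationLagrangianCosymplectic}(i)} gives Lagrangianity. If $\varepsilon=1$, then $L_N$ is non-horizontal of dimension $k-n$, the maximum allowed by \textcolor{red}{Lemma \ref{PropertiesLagrangianCosympelctic}}, and \textcolor{red}{Lemma \ref{CharacterizationLagrangianCosymplectic}(ii)} closes the case. The principal obstacle is precisely the dimension calculation: because $\sharp_\Lambda$ has a one-dimensional kernel $\langle\theta\rangle$, the rank formula $\dim W^{\perp_\Lambda}=2n-\dim W$ only holds for horizontal $W$, and it is this feature that both forces and reflects the case split in $\varepsilon$; once it is in place, the rest is bookkeeping.
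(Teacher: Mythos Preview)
Your proposal is correct and follows essentially the same route as the paper: establish $i_{L_N}^{\ast}\Omega_N=0$ from the pullback relation, split into the two cases according to whether $L\cap N$ is horizontal, compute the dimension in each case via the identity $(T_qL\cap(T_qN)^{\perp_\Lambda})^{\perp_\Lambda}=T_qL\cap\mathcal{H}_q+(T_qN^{\perp_\Lambda})^{\perp_\Lambda}$, and close with the appropriate branch of Lemma~\ref{CharacterizationLagrangianCosymplectic}. Your introduction of the parameter $\varepsilon=\dim(T_qL\cap T_qN)-\dim(T_qL\cap T_qN\cap\mathcal{H}_q)$ packages the case split more compactly than the paper's explicit two-step calculation, but the underlying algebra and the key uses of $\dim(T_qL\cap\mathcal{H}_q)=n$ and $\dim(T_qN\cap\mathcal{H}_q)=k-1$ are the same.
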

\begin{proof} The proof follows the same lines as that of \textcolor{red}{Proposition \ref{ProjectionHorizontalLagrangianCosymplectic}}. That $L_N$ is isotropic follows easily from \textcolor{red}{Proposition \ref{CosymplecticOrthognal}}. However, in order to calculate $\dim L_N$, we need to distinguish whether $L \cap N$ is horizontal or not. 
\item[$i)$] If $L\cap N$ is horizontal, we need to check that $\dim L_N = \dim N - n - 1$, since $L_N$ is horizontal. Because $(T_qL \cap T_qN^{\perp_\Lambda})^{\perp_\Lambda} = T_qL \cap \mathcal{H}_q + (T_qN^{\perp_\Lambda})^{\perp_\Lambda}$, we have
\begin{align*}
 \dim(T_qL\cap \mathcal{H}_q + (T_qN^{\perp_\Lambda})^{\perp_\Lambda}) &= \dim M - \dim (T_qL \cap T_qN^{\perp_\Lambda}) - 1.
\end{align*}
It is easy to check that $\dim(T_qL\cap \mathcal{H}_q + (T_qN^{\perp_\Lambda})^{\perp_\Lambda}) = \dim(T_qL\cap \mathcal{H}_q + T_qN) - 1$ and thus, $$\dim(T_qL \cap T_qN^{\perp_\Lambda}) = \dim M - \dim (T_qL\cap \mathcal{H}_q + T_qN).$$ We conclude that 
\begin{align*}
\dim L_N &= \dim(L \cap N) - \dim (T_qL \cap T_qN^{\perp_\Lambda})\\
& = \dim (L \cap N) - \left(\dim M - \dim(T_qL \cap \mathcal{H}_q  +T_qN) \right) \\
&=  \dim(L \cap N) - \left(\dim M - \dim(T_qL \cap \mathcal{H}_q) - \dim N+ \dim(T_qL \cap \mathcal{H}_q \cap T_qN)\right)\\
&= \dim(L \cap N) - \dim M + \dim L - 1 + \dim N -  \dim(T_qL \cap T_qN)\\
&= \dim N - \dim M + \dim L - 1 = \dim N - 2n - 1 + n + 1 - 1 = \dim N - n -1,
\end{align*}
where we have used that $T_qL \cap \mathcal{H}_q \cap T_qN = T_qL \cap T_qN$, since $N \cap L$ is horizontal, and $\dim T_qL \cap \mathcal{H} _q= \dim L  - 1$, because $T_qL \not \subseteq \mathcal{H}_q$.

\item[$ii)$] If $L\cap N$ is not horizontal, we need to check that $\dim L_N = \dim N - n$. This follows from the same calculation done in i), using that $$\dim(T_qL \cap \mathcal{H}_q \cap T_qN) = \dim(T_qL \cap T_qN) -1.$$
\end{proof}

\subsection{Horizontal coisotropic reduction}
We will restrict the study to \textbf{horizontal} coisotropic submanifolds $N\hookrightarrow M$, that is, manifolds satisfying $T_qN \subseteq \mathcal{H}_q$ for every $q\in N$. Note that in this case the distribution $(TN)^{\perp_\Lambda}$ is also regular, since $$\dim (T_qN)^{\perp_\Lambda} = \dim M - \dim N - \dim (\ker \sharp_\Lambda\cap (T_qN)^0)= \dim M - \dim N - 1.$$

\begin{theorem}[Horizontal coisotropic reduction in the cosymplectic setting] Let $(M, \Omega, \theta)$ be a cosymplectic manifold and $i: N \hookrightarrow M$ be an horizontal coisotropic submanifold. Denote by $\mathcal{F}$ the space of leaves determined by the regular and involutive distribution $(TN)^{\perp_\Lambda}.$ If $N/ \mathcal{F}$ admits a manifold structure such that $\pi : N \rightarrow N/ \mathcal{F}$ is a submersion, then there exists a unique $2$- form $\Omega_N$ in $N/\mathcal{F}$ such that $$\pi^* \Omega_N = i^* \Omega$$and  $(N/\mathcal{F}, \Omega_N)$ is a symplectic manifold. 
\end{theorem}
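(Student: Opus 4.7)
The plan is to follow the pattern of Theorem \ref{VerticalCosymplecticReduction}, adapted to the horizontal setting, where the outcome is a purely symplectic rather than cosymplectic structure. Uniqueness of $\Omega_N$ is immediate from the relation $\pi^* \Omega_N = i^* \Omega$, since $\pi$ is a surjective submersion, so its differential is pointwise surjective. I would define $\Omega_N$ pointwise by $\Omega_N([u],[v]) := (i^*\Omega)(u,v)$ where $[u] := T_q\pi \cdot u$ for $u \in T_qN$.

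First I would check well-definedness, i.e.\ that the value is independent of the chosen representative. It suffices to show that $i_v (i^*\Omega) = 0$ for every $v \in (T_qN)^{\perp_\Lambda}$. By \textcolor{red}{Proposition \ref{CosymplecticOrthognal}} and horizontality ($T_qN \subseteq \mathcal{H}_q$) we have $(T_qN)^{\perp_\Lambda} = (T_qN \cap \mathcal{H}_q)^{\perp_{\Omega|_\mathcal{H}}} = (T_qN)^{\perp_{\Omega|_\mathcal{H}}}$, so $\Omega(v,w)=0$ for all $w \in T_qN$, which is exactly what we need. Independence of the point within the leaf follows from Cartan's formula applied to any local vector field $X$ tangent to the distribution: $\mathcal{L}_X(i^*\Omega) = i_X d(i^*\Omega) + d\,i_X(i^*\Omega) = 0$, since $i^*\Omega$ is closed and $i_X(i^*\Omega)$ vanishes by the previous step. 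Smoothness and closedness of $\Omega_N$ then follow from $\pi^* d\Omega_N = d(\pi^*\Omega_N) = d(i^*\Omega) = i^* d\Omega = 0$ together with $\pi$ being a submersion.

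The crux is non-degeneracy. Here I would exploit the fact that for a horizontal coisotropic submanifold, $T_qN$ is a coisotropic subspace of the symplectic vector space $(\mathcal{H}_q, \Omega|_\mathcal{H})$: indeed, coisotropicity in the $\Lambda$-sense combined with $T_qN \subseteq \mathcal{H}_q$ gives $(T_qN)^{\perp_{\Omega|_\mathcal{H}}} = (T_qN)^{\perp_\Lambda} \subseteq T_qN$. Thus $\Omega_N$ on $T_{[q]}(N/\mathcal{F}) = T_qN / (T_qN)^{\perp_\Lambda}$ is precisely the reduced symplectic form obtained from fiberwise linear symplectic reduction inside $\mathcal{H}_q$, which is non-degenerate by the classical linear reduction theorem. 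A quick dimension count confirms everything: with $\dim M = 2n+1$ and $\dim N = k$, horizontality forces $\dim (T_qN)^{\perp_\Lambda} = 2n-k$, so $\dim N/\mathcal{F} = 2k - 2n$ is even, matching the rank of the non-degenerate form $\Omega_N$.

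No step is genuinely hard: the whole argument is a minor variant of the vertical case together with classical symplectic reduction. The only subtlety is using horizontality to identify the Poisson $\Lambda$-orthogonal on $M$ with the symplectic orthogonal inside the characteristic leaf $\mathcal{H}_q$, which is where \textcolor{red}{Proposition \ref{CosymplecticOrthognal}} does the essential work; once that is in hand, well-definedness, leafwise invariance, closedness and non-degeneracy all follow as in the symplectic case.
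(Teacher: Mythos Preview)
Your argument is correct. It is essentially the same as the paper's, but you carry out the symplectic-reduction argument in place, working fibrewise inside each $\mathcal{H}_q$ via \textcolor{red}{Proposition \ref{CosymplecticOrthognal}}. The paper instead makes one global observation that you omit: since $\theta$ is closed, the horizontal distribution $\mathcal{H}$ is integrable, so a (connected) horizontal submanifold $N$ lies entirely inside a single symplectic leaf of the characteristic foliation; the statement then becomes literally an instance of \textcolor{red}{Theorem \ref{SymplecticReduction}}, and no further work is needed. Your route buys a slightly more self-contained argument that does not appeal to connectedness of $N$ or to the ambient foliation; the paper's route buys brevity and makes transparent \emph{why} the reduced object is symplectic rather than cosymplectic.
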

\begin{proof} Since $N$ is horizontal and the horizontal distribution is integrable, $N$ will be contained in an unique symplectic leaf and thus, we are performing symplectic reduction. The proof is just repeating what has been done in \textcolor{red}{Theorem \ref{SymplecticReduction}}.
\end{proof}

We can generalize this process to arbitrary submanifolds. Let $N \hookrightarrow M$ be a coisotropic submanifold. Since in general we cannot guarantee the well-definedness of the $2$-form in the quotient, we will reduce the intersection of $N$ with each one of the symplectic leaves. It is clear that $TN\cap \mathcal{H}$ is an involutive distribution, since $TN$ and $\mathcal{H}$ are. If this distribution was regular,  for every $q \in N$ there would exist an unique maximal leaf of the distribution, say $S_q$. Notice that $S_q \hookrightarrow M$ is an horizontal submanifold. We can perform coisotropic reduction on each of these submanifolds.

\subsubsection{Projection of Lagrangian submanifolds}

\begin{prop} Let $L \hookrightarrow M$ be a Lagrangian submanifold. If $L$ and $N$ have clean intersection and $L_N:= \pi(L\cap N)$ is a submanifold, then $L_N$ is Lagrangian.
\end{prop}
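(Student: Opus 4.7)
The plan is to reduce this statement to the purely symplectic case (Proposition \ref{LagrangianProjectionSymplectic}) by exploiting the fact that horizontality forces the whole picture to live inside a single symplectic leaf of the characteristic foliation of $\Lambda$. Since $\mathcal{H}$ is involutive and $N$ is horizontal, $N$ is contained in a unique maximal leaf $S$ of $\mathcal{H}$. The restriction $\Omega_S := \Omega|_S$ is a symplectic form on $S$, and by Proposition \ref{CosymplecticOrthognal} one has $(T_qN)^{\perp_\Lambda} = (T_qN)^{\perp_{\Omega_S}}$ for every $q \in N$, so $N$ is a coisotropic submanifold of $(S, \Omega_S)$ whose Weinstein reduction coincides, as a symplectic manifold, with $(N/\mathcal{F}, \Omega_N)$.

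Next I would verify that, under the stated hypotheses, $L \cap S$ is a Lagrangian submanifold of $(S, \Omega_S)$ in a neighbourhood of $L \cap N$. By Lemma \ref{PropertiesLagrangianCosympelctic}, $T_qL \cap \mathcal{H}_q$ is an $n$-dimensional Lagrangian subspace of $(\mathcal{H}_q, \Omega|_\mathcal{H})$ for every $q \in L$. If $L$ is horizontal, then $L$ itself is contained in some symplectic leaf, and near any point of $L \cap N$ that leaf must coincide with $S$; hence $L \cap S$ is locally equal to $L$ and Lagrangian in $S$. If $L$ is non-horizontal, then $T_qL$ is transverse to $\mathcal{H}_q = T_qS$ at every $q \in L \cap N$, so $L$ meets $S$ transversally there and $L \cap S$ is locally a smooth submanifold of $S$ with tangent space $T_qL \cap \mathcal{H}_q$, again Lagrangian in $S$ by Lemma \ref{PropertiesLagrangianCosympelctic}.

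With these ingredients in place, the argument concludes cleanly. Since $N \subseteq S$ we have $L \cap N = (L \cap S) \cap N$, and the clean-intersection hypothesis inside $M$ transfers to clean intersection of $L \cap S$ and $N$ inside $S$, because $T_q(L \cap N) = T_qL \cap T_qN = (T_qL \cap T_qS) \cap T_qN$ whenever $T_qN \subseteq T_qS$. Applying Proposition \ref{LagrangianProjectionSymplectic} to the coisotropic submanifold $N$ and the Lagrangian submanifold $L \cap S$ in $(S, \Omega_S)$ then yields that $L_N = \pi((L \cap S) \cap N) = \pi(L \cap N)$ is a Lagrangian submanifold of $(N/\mathcal{F}, \Omega_N)$.

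The main obstacle I anticipate is the non-horizontal case: one must justify that $L \cap S$ is really a smooth submanifold near $L \cap N$, and carries exactly the right tangent space, rather than just doing a dimension count at the level of tangent spaces. Once transversality of $L$ and $S$ along $L \cap N$ is confirmed (using $T_qL \not\subseteq \mathcal{H}_q$), the cosymplectic problem reduces entirely to the symplectic one already treated, and Proposition \ref{LagrangianProjectionSymplectic} does the rest.
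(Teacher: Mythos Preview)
Your proposal is correct and rests on the same underlying idea as the paper---reduce to the symplectic situation inside the leaf $S$---but the execution differs. You work \emph{globally}: you produce an honest Lagrangian submanifold $L\cap S$ of $(S,\Omega_S)$, verify that it meets $N$ cleanly in $S$, and then invoke Proposition~\ref{LagrangianProjectionSymplectic} as a black box. The paper instead works \emph{pointwise}: for each $q\in L\cap N$ it observes that $T_qN$ is coisotropic and $T_qL\cap\mathcal{H}_q$ is Lagrangian in the symplectic vector space $(\mathcal{H}_q,\Omega|_{\mathcal{H}})$, and then applies the \emph{linear} version of coisotropic reduction to conclude that $d_q\pi\cdot(T_qL\cap T_qN)$ is Lagrangian in $T_{[q]}(N/\mathcal{F})$. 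The paper's route is leaner precisely because it sidesteps the obstacle you correctly flag: there is no need to argue that $L\cap S$ is smooth, or to check transversality in the non-horizontal case, since everything happens at the level of a single tangent space. Your route, by contrast, makes the geometric picture more explicit and reuses Proposition~\ref{LagrangianProjectionSymplectic} verbatim, at the cost of those extra verifications (which you handle correctly).
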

\begin{proof} Let $q \in N \cap L$, we have to prove that $$T_{[q]} L_N = d_q\pi  \cdot \left( T_q L \cap T_q N\right)$$ is a Lagrangian subspace of $T_{[q]} N/\mathcal{F} = T_{[q]}N / \left( T_{q} N\right)^{\perp_{\Lambda}}.$ Since $N$ is horizontal, $T_q N \subseteq \mathcal{H}_q$. Now, from \textcolor{red}{Proposition \ref{CosymplecticOrthognal}}, we know that $$(T_qN) ^{\perp_{\Omega|_{\mathcal{H}}}} = (T_q N)^{\perp_\Lambda} \subseteq T_qN,$$ that is, $T_qN$ is a coisotropic subspace of $\mathcal{H}_q$, with its natural symplectic structure. A similar argument shows that $T_qL$ is a Lagrangian subspace of $\mathcal{H}_q.$ Now, from symplectic reduction (linear symplectic reduction) we conclude that $$ T_{[q]} L_N = d_q\pi  \cdot \left( T_q L \cap T_q N\right)$$ is a Lagrangian submanifold of $T_qN/\left( T_{q} N\right)^{\perp_{\Lambda}} = T_{[q]} N/ \mathcal{F}$, with the symplectic structure induced by $\Omega_q|_{\mathcal{H}}$, which coincides sith the symplectic structure induced by $\Omega_q$, as a quick check shows.
\end{proof}

\section{Jacobi structures} \label{Jacobi}
\setsectiontitle{JACOBI STRUCTURES}
Contact and cocontact manifolds are not Poisson manifolds. However, there is still a Lie bracket defined in the algebra of functions, as we will see. This bracket induces what is called a Jacobi manifold. In this section we define and study such structures (see \cites{libermann2012symplectic, raul} for more details).\\

\begin{Def}[Jacobi Manifold] A \textbf{Jacobi structure} on a manifold $M$ is a Lie bracket defined in the algebra of functions $(\mathcal{C}^\infty(M), \{ \cdot, \cdot \})$ that satisfies the weak Leibniz rule, that is, $$\supp\{f,g\} \subseteq \supp f \cap \supp g.$$
\end{Def}

Every Jacobi bracket can be uniquely expressed as $$\{f,g\} = \Lambda(df, dg) + f E(g) - gE(f),$$ where $\Lambda$ is a bivector field (called the \textbf{Jacobi tensor}) and $E$ is a vector field. $\Lambda$ and $E$ satisfy the equalities $$[E, \Lambda] = 0, \,\, [\Lambda, \Lambda] = 2E \wedge \Lambda;$$ where $[\cdot, \cdot]$ is the Schouten-Nijenhuis bracket. Conversely, given a bivector field $\Lambda$ and a vector field $E$, $$\{f,g\} := \Lambda(df, dg) + f  E(g) - g E(f)$$ defines a Jacobi bracket if and only if both equalities above hold.\\

\begin{remark} {\rm
It is clear that Poisson manifolds are Jacobi manifolds, taking $E = 0$.}
\end{remark}

The Jacobi tensor allows us to define the morphism $$\sharp_\Lambda: T^*M \rightarrow TM;\,\, \alpha \mapsto i_{\alpha} \Lambda.$$ Define the $\Lambda$-orthogonal of distributions $\Delta$ as $$\Delta^{\perp_\Lambda} := \sharp_\Lambda(\Delta^0).$$
We can define the Hamiltonian vector field defined by a function $H$ as
$$
X_H = \sharp_{\Lambda}(dH) + H E
$$
Just like in the Poisson case, we say that a distribution $\Delta$ is:
\begin{itemize}
    \item[$i)$]\textbf{Isotropic} if $\Delta \subseteq \Delta^{\perp_\Lambda}$;
    \item[$ii)$]\textbf{Coisotropic} if $\Delta^{\perp_\Lambda} \subseteq \Delta$;
    \item[$iii)$]\textbf{Legendrian} if $\Delta^{\perp_\Lambda} = \Delta$.
\end{itemize}

These definitions extend naturally to submanifolds.\\

\begin{remark}{\rm 
    As in the case of Poisson manifolds, a Jacobi structure on a manifold $M$ defines a characteristic distribution ${\cal S}$ as follows:
    ${\cal S}_x$ is the vector subspace of $T_xM$ generated by the values of all Hamiltonian vector fields at $x$ and the vector field $E$ evaluated at $x$. This is again an involutive distribution in the sense of Stefan and Sussmann \cites{stefan,sussmann1973orbits}, and the leaves of the corresponding foliation are contact manifolds if the leaf has odd dimension, and locally conformal symplectic manifolds, if the leaf has even dimension. The definition of the Jacobi bracket on the leaves follows the same path that in the case of Poisson manifolds.}
\end{remark}
\section{Coisotropic reduction in contact geometry} \label{Contact}
\setsectiontitle{COISOTROPIC REDUCTION IN CONTACT GEOMETRY}

Contact manifolds are the natural setting for Hamiltonian systems with dissipation, instead of symplectic Hamiltonian systems where the antisymmetry of the symplectic form provides conservative properties \cites{bravetti0,de2019contact}. In the Lagrangian picture, contact Lagrangian systems correspond to the so-called Lagrangians depending on the action, and instead of Hamilton's principle, one has to use the so-called Herglotz principle to obtain the dynamics \cites{miguel}.\\

\begin{Def}[Contact manifold] A
contact manifold is a couple $(M, \eta)$ where $M$ is a $(2n+1)$-dimensional manifold, $\eta$ is a $1$-form and $\eta \wedge ( d\eta)^n \neq 0$. \end{Def} 
 In this case we also have Darboux coordinates $(q^i,p_i,z)$ in $M$  \cites{godbillon1969geometrie} such that $$\eta = dz - p_idq_i; $$
We have also have a bundle isomorphism defined as in the cosymplectic case $$\flat_{\eta}: TM \rightarrow T^* M ; \,\, v_q \mapsto
i_{v_q}d\eta + \eta(v_q)\cdot \eta,$$ its inverse $\sharp_\eta = \flat_ \eta^{-1}$, and a couple of natural distributions:
\begin{itemize}
\item[$i)$] The \textbf{horizontal} distribution $\mathcal{H}:= \ker \eta$;
\item[$ii)$] The \textbf{vertical} distribution $\mathcal{V}:= \ker d\eta$.
\end{itemize}
We can find different types of tangent vectors at a point $q \in M$. Indeed, a tangent vector $v \in T_qM$ will be called
\begin{itemize}
\item[$i)$] \textbf{Horizontal} if $v \in \mathcal{H}_q$;
\item[$ii)$] \textbf{Vertical} if $v \in \mathcal{V}_q$.
\end{itemize}

This time, however, we cannot define a canonical Poisson structure since the bivector field $$\Lambda(\alpha,\beta):= -d\eta(\sharp_{\eta}(\alpha), \sharp_{\eta}(\beta))$$ is not a Poisson tensor. In fact, $$[\Lambda,\Lambda] = - 2 \mathcal{R} \wedge \Lambda; \,\, [E, \Lambda] = 0$$ where $\mathcal{R}$ is the \textbf{Reeb vector field} defined as $\mathcal{R} := \sharp_{\eta}(\eta)$ (locally $\mathcal{R} = \frac{\partial}{\partial z}$). This is easily seen performing a direct calculation in Darboux coordinates using the local expresion $$\Lambda = \frac{\partial}{\partial p_i}\wedge \frac{\partial}{\partial q^i} + p_i \frac{\partial}{\partial p_i} \wedge \frac{\partial}{\partial z}.$$

 This defines a Jacobi structure in $M$ taking $\Lambda$ as above and $E = - \mathcal{R}$ (see \textcolor{red}{Section \ref{Jacobi}}). The Jacobi bracket is locally expressed as $$\{f,g\} =  \frac{\partial f}{\partial p_i}\frac{\partial g}{\partial q^i}- \frac{\partial f}{\partial q^i} \frac{\partial g}{\partial p_i} + p_i\left (  \frac{\partial f}{\partial p_i}\frac{\partial g}{\partial z}- \frac{\partial g}{\partial p_i} \frac{\partial f}{\partial z} \right ) + g \frac{\partial f}{\partial z} - f\frac{\partial g}{\partial z}.$$

The morphism induced by the Jacobi tensor $\Lambda$ satisfies $$\ker \sharp_\Lambda = \langle  \eta\rangle, \,\,\,\, \im \sharp_\Lambda = \mathcal{H}.$$

\begin{remark}{\rm Although notation between cosymplectic and contact manifolds is similar, they are different in nature. In cosymplectic geometry, we had a closed $1$-form $\theta$, and a closed $2$-form $\Omega$ satisfying the non-degeneracy condition. In contact geometry we have a $1$-form $\eta$ and a (closed) $2$-form $d \eta$ which also satisfies the non-degeneracy condition. In each tangent space, these structures will be isomorphic. Indeed, we can always assume that $p_i = 0$ at certain $q \in M$, which would give $\eta = dz$ in said point. However, they are far from locally isomorphic. In cosymplectic manifolds, the horizontal distribution is involutive, but in contact manifolds is not (this is the key to obtain dissipative dynamics). In \textcolor{red}{Section \ref{interlude}} we will enphasize on the differences between these geoemtries by studying the variational principle in contact mechanics.}
\end{remark}

\subsection[Hamiltonian vector fields as Lagrangian submanifolds]{Hamiltonian and evolution vector fields as Lagrangian and Legendrian submanifolds}
\begin{Def}[Hamiltonian vector field] Let $H \in \mathcal{C}^\infty(M)$. Define the \textbf{Hamiltonian vector field} of $H$ as $$X_H := \sharp_ \Lambda(dH) - H \mathcal{R} = \sharp_\eta(dH) - (\mathcal{R}(H) + H)\mathcal{R}.$$
\end{Def}
Locally, it has the local expression 
$$X_H = \partder{H}{p_i} \partder{}{q^i} - \left (\partder{H}{q^i} + p_i\partder{H}{z} \right) \partder{}{p_i} + \left( p_i \partder{H}{p_i} - H\right ) \partder{}{z}.$$ The dynamics corresponding to a Hamiltonian vector field $X_H$ are determined by
\begin{align*}
\totder{q^i}{t} &= \partder{H}{p_i},\\
\totder{p_i}{t} &= - \partder{H}{q^i} - p_i \partder{H}{z},\\
\totder{z}{t} &= p_i \partder{H}{p_i} - H.
\end{align*}
For instance, if we take as Hamiltonian $$H := \frac{p^2}{2m} + \frac{\kappa^2 m q^2}{2} + \gamma z,$$ the equations determined by $X_H$ are 
\begin{align*}
    \totder{q}{t} &= \frac{p}{m},\\
    \totder{p}{t} &=  - \kappa^2 m q - \gamma p,\\
    \totder{z}{t} &= \frac{p^2}{2m} - \frac{\kappa^2 m q^2}{2} - \gamma z,
\end{align*}
which are precisely the equations for the damped harmonic oscillator.\\

We can define a symplectic structure in $TM$ taking $\Omega_0 := \flat_\eta^* \Omega_M$, where $\Omega_M$ is the canonical symplectic structure in $T^*M$. In local coordinates $(q^i, p_i, z, \dot{q}^i, \dot{p}_i, \dot{z})$, it has the expression
\begin{align*}
    \Omega_0 =\,& p_ip_j dq^i \wedge d\dot{q}^j + \left((1 + \delta_i ^j)p_i \dot{q}^j  - \delta_i^j\dot{z}\right) dq^i \wedge dp_j - dq^i \wedge d\dot{p}_i - p_i dq^i \wedge d\dot{z}\\
     &+ dp_i \wedge d\dot{q}^i + dz \wedge d\dot{z} - p_i dz \wedge d\dot{q}^i - \dot{q}^i dz \wedge dp_i \\
     = \,& dq^i \wedge \left (p_ip_i dq^j + (1 + \delta_i^j)p_i \dot{q}^j dp_j - \dot{z} dp_i - d\dot{p}_i - p_i d\dot{z}\right) \\
     & + dp_i \wedge d\dot{q}^i  + dz \wedge \left( d\dot{z} - p_id\dot{q}^i - \dot{q}^i dp_i\right)
\end{align*}
 \begin{Def}[Gradient vector field] Given a Hamiltonian $H$ on $M$, define the \textbf{gradient vector field} of $H$ as $$\grad H := \sharp_\eta(dH).$$
 \end{Def}
 Locally, it is given by $$\grad H = \partder{H}{p_i} \partder{}{q^i} - \left (\partder{H}{q^i} + p_i\partder{H}{z} \right) \partder{}{p_i} + \left( p_i \partder{H}{p_i} + \partder{H}{z}\right ) \partder{}{z}.$$

 We have the following relation between both vector fields $$X_H = \grad H - (\mathcal{R}(H) + H ) \mathcal{R}.$$
 
 Just like in the previous sections, a vector field $X: M \rightarrow TM$ is locally a gradient vector field if and only if it defines a Lagrangian submanifold in $(TM , \Omega_0)$. The proof is straight-forward, checking that $$X^*\Omega_0 = - dX^{\flat_\eta}.$$
We can also interpret the Hamiltonian vector field $X_H$ as a Lagrangian submanifold of $TM$, but we need to modify slightly the symplectic form. It is easy to verify that $$X_H^*\Omega_0 = \d(\mathcal{R}(H) \eta + H\eta),$$ therefore, taking $$ \Omega_H:= \Omega_0 - \d(\mathcal{R}(H) \eta + H\eta)^v,$$ we have $$X_H^*\Omega_H = 0.$$ It is clear that $\Omega_H$ is a symplectic form. We have proved:\\

\begin{prop} The Hamiltonian vector field $X_H: M \rightarrow TM$ defines a Lagrangian submanifold of the symplectic manifold $(TM,  \Omega_H)$.
\end{prop}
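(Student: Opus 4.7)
The plan is to mirror the arguments given in the symplectic and cosymplectic sections. I would first establish the contact analog of Proposition~\ref{Pullbackvectorfields}: for any vector field $X : M \to TM$ one has $X^* \lambda_0 = \flat_\eta(X)$, where $\lambda_0 = \flat_\eta^*\lambda_M$. This is immediate from the tautological property $\alpha^*\lambda_M = \alpha$ of the Liouville 1-form, applied to the 1-form $\flat_\eta \circ X : M \to T^*M$. Taking exterior derivatives and recalling $\Omega_0 = -d\lambda_0$ yields the basic identity $X^*\Omega_0 = -d(\flat_\eta(X))$.

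Next, I would specialise this identity to $X = X_H$. Writing $X_H = \sharp_\eta(dH) - (\mathcal{R}(H) + H)\mathcal{R}$ and using $\flat_\eta \circ \sharp_\eta = \operatorname{id}$ together with $\flat_\eta(\mathcal{R}) = \eta$, one obtains $\flat_\eta(X_H) = dH - (\mathcal{R}(H) + H)\eta$, so that
\[
X_H^*\Omega_0 = -d(dH) + d\bigl((\mathcal{R}(H) + H)\eta\bigr) = d(\mathcal{R}(H)\eta + H\eta),
\]
which is the identity asserted just before the statement.

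For the correction term, I would use the fact that the vertical lift of a covariant tensor field $\beta$ on $M$ is $\beta^v = \pi_M^*\beta$, so that every section $X : M \to TM$ of $\pi_M$ satisfies $X^*\beta^v = \beta$. Applied to $\beta = d(\mathcal{R}(H)\eta + H\eta)$, this gives $X_H^*\Omega_H = d(\mathcal{R}(H)\eta + H\eta) - d(\mathcal{R}(H)\eta + H\eta) = 0$. Since $\dim X_H(M) = \dim M = 2n + 1 = \tfrac{1}{2} \dim TM$, the characterisation in Lemma~\ref{CharacterizationLagrangianSympelctic} shows that $X_H(M)$ is Lagrangian in $(TM, \Omega_H)$, provided $\Omega_H$ is symplectic.

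The main obstacle is therefore to confirm that $\Omega_H$ is genuinely symplectic. Closedness is automatic, since $\Omega_0$ is exact and $d(\mathcal{R}(H)\eta + H\eta)^v = \pi_M^* d(\mathcal{R}(H)\eta + H\eta)$ is closed. Non-degeneracy is the delicate point: writing $\Omega_H = \Omega_0 - \pi_M^*\gamma$ with $\gamma = d((\mathcal{R}(H) + H)\eta)$ a 2-form on $M$, the correction involves only the base coordinates $dq^i, dp_i, dz$, whereas the coordinate expression of $\Omega_0$ displayed above has nontrivial coefficients coupling base forms to the fibre forms $d\dot{q}^i, d\dot{p}_i, d\dot{z}$. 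A direct check of the resulting skew matrix — equivalently, the standard fact that magnetic-type modifications of a canonical cotangent symplectic structure by pullbacks of base 2-forms remain non-degenerate (transferred from $T^*M$ to $TM$ via the fibre-preserving diffeomorphism $\flat_\eta$) — yields non-degeneracy, completing the argument.
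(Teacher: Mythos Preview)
Your proposal is correct and follows essentially the same line as the paper: the paper computes $X_H^*\Omega_0 = d(\mathcal{R}(H)\eta + H\eta)$, subtracts the vertical lift, and concludes $X_H^*\Omega_H = 0$, declaring ``It is clear that $\Omega_H$ is a symplectic form.'' You unwind each of these steps with more care --- in particular you supply an actual argument (the magnetic-type observation transferred through $\flat_\eta$) for the non-degeneracy of $\Omega_H$, which the paper simply asserts --- but the strategy is identical.
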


Now we study evolution vector fields, an important vector field in the application of contact geometry to thermodynamics.\\

\begin{Def} Given a Hamiltonian $H$, we define the \textbf{evolution vector field} as $$\mathcal{E}_H := X_H + H \mathcal{R}.$$
\end{Def}

Locally, the evolution vector field is written $$\mathcal{E}_H = \partder{H}{p_i} \partder{}{q^i} - \left (\partder{H}{q^i} + p_i\partder{H}{z} \right) \partder{}{p_i} + p_i \partder{H}{p_i}\partder{}{z}.$$

Let us see how we can modify the symplectic form $\Omega_0$ in such a way that $\mathcal{E}_H$ defines a Lagrangian submanifold. We have $$\mathcal{E}_H^*\Omega_0 = X_H^* \Omega_0 + (H \mathcal{R}) ^* \Omega_0 = \d(\mathcal{R}(H) \eta + H\eta) - \d(H \eta) = \d( \mathcal{R}(H) \eta),$$ and thus, $\mathcal{E}_H$ defines a Lagrangian submanifold of $(TM , \widetilde \Omega_H),$ where $$\widetilde \Omega_H = \Omega_0 - \d(\mathcal{R}(H) \eta).$$

We can also interpret Hamiltonian and evolution vector fields as Legendrian submanifolds of a certain contact structure defined on $TM \times \mathbb{R}.$\\
 
\begin{Def} Let $(M, \eta)$ be a contact manifold. Define the contact form on $TM \times \mathbb{R}$ as $$\hat{\eta} := \eta^c + t \eta^v,$$  where $\eta^c$, $\eta^v$ are the complete and vertical lifts \cites{de2011methods}. It is easily checked that $\hat{\eta}$ defines a contact structure \cites{de2019contact}. 
\end{Def}
In local coordinates it has the expression: 

$$\hat{\eta} = d\dot{z} - \dot{p}_i dq^i - p_i d\dot{q}^i + t dz - tp_i dq^i$$

We have the following \cites{de2019contact}:\\
\begin{prop} Let $X_H: M \rightarrow TM$ be a Hamiltonian vector field. Then, the submanifold defined by the immersion $$i: M \hookrightarrow TM \times \mathbb{R};\,\,\,\,p \mapsto (X_H(p), \mathcal{R}(H))$$ is a Legendrian submanifold of $(TM\times \mathbb{R}, \hat{\eta})$.
\end{prop}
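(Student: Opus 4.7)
Since $\dim M = 2n+1$ and $\dim(TM\times\mathbb{R}) = 4n+3 = 2(2n+1)+1$, a Legendrian submanifold of $(TM\times\mathbb{R},\hat{\eta})$ must have dimension $2n+1$. The map $i = (X_H,\mathcal{R}(H))$ is an immersion (its first component $X_H\colon M\to TM$ is already a section of $\pi_{TM}$) with image of dimension exactly $\dim M = 2n+1$, so the dimensional condition is automatic. Thus the whole content of the statement is to show $i^\ast\hat{\eta} = 0$.

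The plan is to decompose $\hat{\eta} = \eta^c + t\,\eta^v$ and evaluate each piece using the two classical identities for a vector field $X\colon M\to TM$ viewed as a section: $X^\ast(\eta^v) = \eta$ (since $\eta^v$ is the pullback of $\eta$ along the bundle projection) and $X^\ast(\eta^c) = \mathcal{L}_X\eta$ (a standard property of complete lifts, verifiable in coordinates). Since the $\mathbb{R}$-component of $i$ is the function $\mathcal{R}(H)$, we have $i^\ast t = \mathcal{R}(H)$, and therefore
\begin{equation*}
i^\ast \hat{\eta} \;=\; X_H^\ast \eta^c + \mathcal{R}(H)\cdot X_H^\ast \eta^v \;=\; \mathcal{L}_{X_H}\eta + \mathcal{R}(H)\,\eta .
\end{equation*}

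It then remains to compute $\mathcal{L}_{X_H}\eta$ via Cartan's formula. Using the expression $X_H = \sharp_\eta(dH) - (\mathcal{R}(H)+H)\mathcal{R}$, the standard identities $\eta(\mathcal{R})=1$, $i_\mathcal{R}d\eta = 0$, and $\eta(\sharp_\eta(dH)) = \mathcal{R}(H)$ (the last obtained by pairing $\flat_\eta(\sharp_\eta(dH)) = dH$ with $\mathcal{R}$), one gets $\eta(X_H) = -H$, hence $d(i_{X_H}\eta) = -dH$. Likewise, from $\flat_\eta(\sharp_\eta(dH))=dH$ one reads off $i_{\sharp_\eta(dH)}d\eta = dH - \mathcal{R}(H)\eta$, and so $i_{X_H}d\eta = dH - \mathcal{R}(H)\eta$. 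Adding these gives $\mathcal{L}_{X_H}\eta = -\mathcal{R}(H)\eta$, which precisely cancels the second term above, yielding $i^\ast\hat{\eta} = 0$.

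The only real obstacle is recalling (or verifying in Darboux coordinates) the identity $X^\ast \alpha^c = \mathcal{L}_X\alpha$ for the complete lift; once that is in hand, the rest is bookkeeping with the contact calculus and the defining properties of $\mathcal{R}$ and $\sharp_\eta$. As a sanity check, one can substitute the coordinate formulas for $X_H$ and $\hat{\eta}$ given earlier in the section and verify that $i^\ast\hat{\eta}=0$ collapses term by term, but this is not needed for the argument.
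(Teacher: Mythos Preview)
Your proof is correct and follows essentially the same route as the paper's: both pull back $\hat{\eta}$ via the identities $X^\ast\eta^c=\mathcal{L}_X\eta$ and $X^\ast\eta^v=\eta$, then verify $\mathcal{L}_{X_H}\eta=-\mathcal{R}(H)\eta$ by Cartan's formula, invoking the isotropy criterion $i^\ast\eta=0$ together with the dimension count to conclude Legendrian. You are simply more explicit about the dimension argument and the intermediate contact-calculus identities than the paper, which compresses these into a one-line citation of its Lemma~\ref{isotropicClassificationContact}.
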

\begin{proof} Using the properties of complete and vertical lifts we have $$(X_H \times \mathcal{R}(H)) ^* \hat{\eta} = \mathcal{L}_{X_H}\eta + \mathcal{R}(H) \eta.$$ Using \textcolor{red}{Lemma \ref{isotropicClassificationContact}} it will be sufficient to see that $\mathcal{L}_{X_H} \eta = - \mathcal{R}(H).$ This is a straight-forward verification using $$\mathcal{L}_{X_H} \eta = di_{X_H} \eta + i_{X_H} d\eta = - dH + dH - \mathcal{R}(H) \eta. $$
\end{proof}
\subsection{An overview of the use of contact geometry in thermodynamics}
A thermodynamical system is characterized by the following variables $$(H, T, S, P_i, V^i),$$ corresponding to energy, temperature, entropy, and 
the generalized pressures and volumes, respectively. We will denote $q^i = V^i$, following the notation used so far. The energy of the system is a function 
\begin{align*}
    H : & \; T^\ast Q \times \mathbb{R} \rightarrow \mathbb{R}; \,\,(q^i, p_i, S) \mapsto H(q^i,p_i, S).
\end{align*}
The first law of thermodynamics can be written as $$d H = \delta \mathcal{Q} - \delta \mathcal{W},$$ where $\delta \mathcal{Q}$ and $\delta \mathcal{W}$ are one forms representing the heat and work, respectively. We will assume that these forms have local expressions 
$$\delta \mathcal{Q} = T dS, \,\, \delta \mathcal{W} = P_i dq^i,$$ 
for some functions $P_i, T$ that physically represent the conjugate variables to $q^i$ (pressure, if $q^i$ represented volume), and $S$ (temperature, if $S$ was the entropy), respectively. Then, the first law of thermodynamics reads $$d H = Td S  - P_i dq^i.$$ If we were studying an isolated system, energy would be conserved, that is $$0 = Td S  - P_i dq^i$$ or, dividing by $T$, $$0 = d S - \frac{P_i}{T} d q^i.$$ Identifying $p_i = P_i/T$, after a change of variables if necessary, we obtain $$0 = dS - p_i dq^i.$$ Defining $$\eta := d S - p_i dq^i,$$ the canonical contact form in $T^\ast Q \times \mathbb{R},$ we conclude that isolated processes take place in Legendrian submanifolds of $T^\ast Q \times \mathbb{R},$ motivating their study.\\

It turns out that the integral curves of the evolution vector field $\mathcal{E}_H$ can be interpreted as an isolated system\\

\begin{prop} The integral curves of $\mathcal{E}_H$ satisfy $$\totder{H}{t} = 0.$$ Furthermore, locally $$\totder{S}{t} = p_i dq ^i = \frac{P_i}{T} d q^i.$$
\end{prop}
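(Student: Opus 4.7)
The plan is to reduce both claims to direct computations with the evolution vector field, using either its intrinsic decomposition or its coordinate form given above. For the conservation law, along an integral curve the chain rule gives $\totder{H}{t} = \mathcal{E}_H(H)$, so I would show $\mathcal{E}_H(H) = 0$. Writing $\mathcal{E}_H = X_H + H\mathcal{R}$, the problem reduces to establishing the intrinsic identity $X_H(H) = -H\,\mathcal{R}(H)$, so that the two contributions cancel.

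To prove this identity I would argue as in the proof of the preceding proposition. From the alternative expression $X_H = \sharp_\eta(dH) - (\mathcal{R}(H) + H)\mathcal{R}$ one reads off $\eta(X_H) = \mathcal{R}(H) - (\mathcal{R}(H)+H) = -H$. Combined with the already-computed relation $\mathcal{L}_{X_H}\eta = -\mathcal{R}(H)\eta$, contraction by $X_H$ yields, on the one hand, $i_{X_H}\mathcal{L}_{X_H}\eta = -\mathcal{R}(H)\,\eta(X_H) = H\,\mathcal{R}(H)$, and, on the other hand via Cartan's magic formula, $i_{X_H}\mathcal{L}_{X_H}\eta = X_H(\eta(X_H)) + d\eta(X_H,X_H) = -X_H(H)$. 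Comparing the two expressions gives the claim, and hence $\mathcal{E}_H(H) = 0$. A purely coordinate verification from the local expression of $\mathcal{E}_H$ works equally well, with all four terms cancelling in pairs.

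For the local formula, I would simply read off the $\partial/\partial z$-component of $\mathcal{E}_H$, which gives $\totder{z}{t} = p_i\,\partder{H}{p_i}$; since also $\totder{q^i}{t} = \partder{H}{p_i}$, this says $\totder{z}{t} = p_i\,\totder{q^i}{t}$ along the curve, i.e.\ $\eta$ vanishes on integral curves of $\mathcal{E}_H$, as befits an isolated process. Identifying $z = S$ and $p_i = P_i/T$ per the thermodynamic dictionary introduced above then yields the stated expression. The main (mild) obstacle is the bookkeeping of the identity $X_H(H) = -H\,\mathcal{R}(H)$, which distinguishes the contact case from the symplectic one (where $X_H(H) = 0$ automatically); the $-H\mathcal{R}$ correction in $X_H$ creates a genuine drift of $H$ that the $+H\mathcal{R}$ correction in $\mathcal{E}_H$ is precisely designed to cancel, which is the geometric reason why the evolution vector field, rather than the Hamiltonian one, governs conservative thermodynamic processes.
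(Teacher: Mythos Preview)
Your argument is correct. The paper takes a slightly different route for the conservation law: rather than keeping the decomposition $\mathcal{E}_H = X_H + H\mathcal{R}$, it rewrites $\mathcal{E}_H = \grad H - \mathcal{R}(H)\mathcal{R}$ and uses the defining relation $i_{\grad H}\,d\eta + \eta(\grad H)\,\eta = dH$ of the musical isomorphism to compute $dH(\grad H) = (\mathcal{R}(H))^2$ directly, from which the cancellation is immediate. Your approach instead isolates the dissipation identity $X_H(H) = -H\,\mathcal{R}(H)$ via the already-established formula $\mathcal{L}_{X_H}\eta = -\mathcal{R}(H)\eta$ and a contraction; this is one step longer but has the merit of making that identity (and the mechanism by which $\mathcal{E}_H$ restores conservation) explicit. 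For the second claim, your reading of the $\partial/\partial z$-component to get $\eta(\mathcal{E}_H)=0$ is in fact clearer than the paper's somewhat cryptic appeal to ``$dH = dS - p_i\,dq^i$''.
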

\begin{proof} Indeed, by definition we know that $$i_{\grad H} d \eta + \eta(H) \eta = d H.$$ Therefore,
\begin{align*}
    \totder{H}{t} &= d H \cdot  \mathcal{E}_H = d H \cdot X_H + H \mathcal{R}(H) = d H \cdot \grad H - (\mathcal{R}(H) + H) \mathcal{R}(H) + H \mathcal{R}(H)\\
    &=(i_{\grad H} d \eta + \eta(H) \eta)\cdot \grad H - (\mathcal{R}(H))^2  = 0.
\end{align*}
Now, the last part is a consequence of the equality $$d H = d S - p_i dq^i.$$
\end{proof}
For more details on the subject, we refer to \cites{ArnoldThermo, GeometrySomeThermo,simoes2020contact,simoes2021ThermodynamicSystems, MRUGALAcontactstrcuture, MRUGALAcontinuouscontact}, (see also two modern approaches \cites{bravetti1,bravetti2}).

\subsection{Coisotropic reduction}

Coisotropic reduction in contact manifolds has been developed in \cites{de2019contact} (see also \cites{Tortorella2017,le2019deformations}).\\

The following definition will result useful. Given a subspace $\Delta_q \subseteq T_qM$, we define the $d\eta$-\textbf{orthogonal complement} as $$\Delta_q^{\perp_{d\eta}}:= \{v \in T_qM,\,|\,\, d\eta(v,w) = 0\,\, \forall w \in \Delta_q\}.$$
\begin{prop} \label{ContactOrthogonalGeneral} Let $\Delta_q \subseteq T_qM$ be a subspace. Then $$\Delta_q^{\perp_{d\eta}} \cap \mathcal{H}_q \subseteq \Delta_q^{\perp_\Lambda}.$$ Furthermore, if $\mathcal{R}(q) \in \Delta_q$ or $\Delta_q \subseteq \mathcal{H}_q$, the equality holds.
\end{prop}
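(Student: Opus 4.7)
The key algebraic input I would extract first is a clean relation between the two sharps on the contact manifold. Starting from the definitions $\Lambda(\alpha,\beta) = -d\eta(\sharp_\eta\alpha,\sharp_\eta\beta)$, $\flat_\eta(v)=i_v d\eta + \eta(v)\eta$, and $\mathcal{R}=\sharp_\eta\eta$, a short direct pairing-computation yields
\begin{equation*}
\sharp_\Lambda(\alpha) = \sharp_\eta(\alpha) - \alpha(\mathcal{R})\,\mathcal{R},
\end{equation*}
and also $\alpha(\mathcal{R}) = \eta(\sharp_\eta\alpha)$. This identity (together with $\ker\sharp_\Lambda=\langle\eta\rangle$ and $\operatorname{Im}\sharp_\Lambda=\mathcal{H}$, already stated in the excerpt) will be the workhorse for everything that follows.

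For the inclusion, I would take $v\in \Delta_q^{\perp_{d\eta}}\cap\mathcal{H}_q$ and produce an explicit $\alpha\in\Delta_q^0$ such that $\sharp_\Lambda\alpha=v$. The natural candidate is $\alpha := i_v d\eta$: because $\eta(v)=0$ one has $\alpha=\flat_\eta(v)$, hence $\sharp_\eta\alpha=v$; and $\alpha(\mathcal{R})=d\eta(v,\mathcal{R})=0$, so the identity above gives $\sharp_\Lambda\alpha=v$. Finally, $\alpha\in\Delta_q^0$ is the very assumption $v\in\Delta_q^{\perp_{d\eta}}$, since $\alpha(w)=d\eta(v,w)=0$ for $w\in\Delta_q$.

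For the equality under either extra hypothesis, I would take $v=\sharp_\Lambda\alpha$ with $\alpha\in\Delta_q^0$; horizontality $v\in\mathcal{H}_q$ is automatic, so only $d\eta(v,w)=0$ for $w\in\Delta_q$ needs checking. If $\mathcal{R}(q)\in\Delta_q$, then $\alpha(\mathcal{R})=0$ forces $v=\sharp_\eta\alpha$, so $\eta(v)=\alpha(\mathcal{R})=0$, which combined with $\flat_\eta v = \alpha$ gives $i_v d\eta=\alpha$; hence $d\eta(v,w)=\alpha(w)=0$. If instead $\Delta_q\subseteq\mathcal{H}_q$, for any $w\in\Delta_q$ horizontality lets me write $w=\sharp_\Lambda\beta$ with $\beta=i_w d\eta$; antisymmetry of $\Lambda$ then yields $\beta(v)=\Lambda(\alpha,\beta)=-\alpha(w)=0$, and unpacking $\beta(v)=d\eta(w,v)$ gives the claim.

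I do not expect a serious obstacle here; the only delicate point is the first paragraph's identity for $\sharp_\Lambda$, which requires a careful bookkeeping of signs and the use of $i_\mathcal{R} d\eta=0$. Once that identity is in hand, both the inclusion and the two equality cases reduce to one-line verifications exploiting the interplay between $\flat_\eta$ and $d\eta$ on horizontal vectors and, in the second equality case, the antisymmetry of the Jacobi bivector. The whole proof should parallel the cosymplectic result in Proposition \ref{CosymplecticOrthognal}, with the extra term $-\alpha(\mathcal{R})\mathcal{R}$ being precisely what makes the extra hypothesis on $\Delta_q$ necessary for equality.
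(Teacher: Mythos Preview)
Your proof is correct. The inclusion argument is essentially identical to the paper's: both take $v\in\Delta_q^{\perp_{d\eta}}\cap\mathcal{H}_q$, set $\alpha=i_v d\eta=\flat_\eta(v)$, and verify that $\sharp_\Lambda(\pm\alpha)=v$ with $\alpha\in\Delta_q^0$. Your preliminary identity $\sharp_\Lambda\alpha=\sharp_\eta\alpha-\alpha(\mathcal R)\mathcal R$ makes this step especially transparent.

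Where you diverge from the paper is in establishing equality. The paper does \emph{not} prove the reverse inclusion directly; instead it performs a dimension count in each case. When $\mathcal R(q)\in\Delta_q$ it uses $\eta\notin\Delta_q^0$ to get $\dim\Delta_q^{\perp_\Lambda}=\dim M-\dim\Delta_q$, and then rewrites $\Delta_q^{\perp_{d\eta}}\cap\mathcal H_q$ as the symplectic complement of $\Delta_q\cap\mathcal H_q$ inside $(\mathcal H_q,d\eta|_{\mathcal H})$ to match dimensions; the case $\Delta_q\subseteq\mathcal H_q$ is handled analogously with $\eta\in\Delta_q^0$. Your route instead exploits the identity for $\sharp_\Lambda$ once more: in the first case $\alpha(\mathcal R)=0$ collapses $\sharp_\Lambda\alpha$ to $\sharp_\eta\alpha$ and the reverse inclusion follows from $i_v d\eta=\alpha$; in the second case you represent any $w\in\Delta_q\subseteq\mathcal H_q$ as $\sharp_\Lambda\beta$ and use the antisymmetry of $\Lambda$. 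Your argument is more self-contained and avoids the bookkeeping of intersecting with $\mathcal H_q$ and tracking ranks; the paper's dimension count, on the other hand, produces explicit formulas for $\dim\Delta_q^{\perp_\Lambda}$ that are reused later (e.g.\ in the classification of Legendrian submanifolds and in computing $\operatorname{rank}(TN)^{\perp_\Lambda}$).
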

\begin{proof} Let $v \in \Delta_q^{\perp_{d\eta}}\cap \mathcal{H}_q$ and take $\alpha := i_v d \eta$. It is clear that $\alpha \in \Delta_q^0$. We will prove that $\sharp_\Lambda(-\alpha) = v$. Indeed, for $\beta \in T^*_qM$, since $b_\eta(v) = \alpha$ (as a direct calculation shows), we have
\begin{align*}
\langle \beta, \sharp_\Lambda(\alpha) \rangle &= \Lambda(\alpha, \beta) = \Omega(\sharp_\eta(\alpha), \sharp_\eta(\beta)) = \Omega(v, \sharp_\eta(\beta)) \\
&= -\Omega( \sharp_\eta(\beta),v) - \eta(\sharp_\Lambda(\beta))\eta(v) =- \langle \flat_\eta(\sharp_\eta(\beta)), v\rangle = -\langle \beta, v \rangle,
\end{align*}
that is, $v = \sharp_\Lambda(-\alpha)$.\\

Now, if $\mathcal{R}(q) \in \Delta_q$, we compare dimensions. Since $\ker \sharp_\Lambda = \langle \eta\rangle$ and $\eta \not \in \Delta_q^0$, we have $$\dim \Delta_q^{\perp_\Lambda} = \dim \Delta_q^0 = \dim M - \dim \Delta_q.$$
Furthermore, $\Delta^{\perp_{d\eta}}\cap \mathcal{H}_q$ has the same dimension, since
\begin{align*} 
\Delta_q^{\perp_{d\eta}} \cap \mathcal{H}_q &= (\Delta_q \cap \mathcal{H}_q \oplus \mathcal{V}_q)^{\perp_{d\eta}} \cap \mathcal{H}_q = (\Delta_q \cap \mathcal{H}_q)^{\perp_{d\eta}} \cap \mathcal{V}_q^{\perp_{d\eta}} \cap \mathcal{H}_q\\ & = (\Delta_q \cap \mathcal{H}_q)^{\perp_{d\eta}} \cap \mathcal{H}_q.
\end{align*}
This latter is just the symplectic complement in $(\mathcal{H}_q, d\eta |_{\mathcal{H}})$ and hence, $$\dim (\Delta_q^{\perp_{d\eta}}\cap \mathcal{H}_q) = \dim \mathcal{H}_q - \dim (\Delta_q \cap \mathcal{H}_q) = \dim M - 1 - (\dim \Delta_q - 1).$$

Now, if $\Delta_q \subseteq \mathcal{H}_q$, $\eta \in \Delta_q^0$ and, thus, $$\dim \Delta_q^{\perp_{d\eta}} = \dim M - \dim \Delta_q - 1.$$
Since $\Delta_q^{\perp_{d\eta}} \cap \mathcal{H}_q$ is just the symplectic complement of $\Delta_q$ we have $$\dim (\Delta_q^{\perp_{d\eta}} \cap \mathcal{H}_q) = \dim \mathcal{H}_q - \dim \Delta_q = \dim M - 1 - \dim \Delta_q.$$
\end{proof}
This proposition allows us to characterize Legendrian submanifolds:\\
\begin{lemma} If $L\hookrightarrow M$ is a Legendrian submanifold, then $L$ is horizontal and $\dim L = n$ (where $\dim M = 2n+1$). Furthermore, if $L$ is horizontal and isotropic (or coisotropic) with $\dim L = n$, $L$ is Legendrian.
\end{lemma}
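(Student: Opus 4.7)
The plan is to use two ingredients established just above: the identification $\ker \sharp_\Lambda = \langle \eta \rangle$ with $\operatorname{Im}\sharp_\Lambda = \mathcal{H}$, and Proposition \ref{ContactOrthogonalGeneral}, which in the horizontal case identifies $\Delta_q^{\perp_\Lambda}$ with the symplectic orthogonal of $\Delta_q$ inside $(\mathcal{H}_q, d\eta|_{\mathcal{H}})$. With these in hand the whole argument is dimension counting, so I would just split the statement into its two implications.

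For the forward direction, assume $L$ is Legendrian, so $T_qL = T_qL^{\perp_\Lambda}$. Since $T_qL^{\perp_\Lambda} = \sharp_\Lambda(T_qL^0) \subseteq \operatorname{Im}\sharp_\Lambda = \mathcal{H}_q$, this forces $T_qL \subseteq \mathcal{H}_q$, i.e.\ $L$ is horizontal. Being horizontal means $\eta \in T_qL^0$, and since $\eta$ spans $\ker \sharp_\Lambda$, the rank–nullity applied to $\sharp_\Lambda$ restricted to $T_qL^0$ gives
\[
\dim T_qL^{\perp_\Lambda} \;=\; \dim T_qL^0 - 1 \;=\; (2n+1) - \dim L - 1 \;=\; 2n - \dim L.
\]
Equating this with $\dim L$ yields $\dim L = n$.

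For the converse, assume $L$ is horizontal with $\dim L = n$ and either isotropic or coisotropic. Because $T_qL \subseteq \mathcal{H}_q$, Proposition \ref{ContactOrthogonalGeneral} applies and gives $T_qL^{\perp_\Lambda} = T_qL^{\perp_{d\eta}} \cap \mathcal{H}_q$, which is precisely the symplectic complement of $T_qL$ inside the $2n$-dimensional symplectic vector space $(\mathcal{H}_q, d\eta|_{\mathcal{H}})$. Hence $\dim T_qL^{\perp_\Lambda} = 2n - n = n = \dim T_qL$, so any inclusion between $T_qL$ and $T_qL^{\perp_\Lambda}$ is forced to be an equality, and $L$ is Legendrian.

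There is no real obstacle here; the only thing to be careful about is not to use the symplectic-style identification $\Delta^{\perp_\Lambda} = (\Delta \cap \mathcal{H})^{\perp_{d\eta|_{\mathcal{H}}}}$ until we have verified that $L$ is horizontal (in the forward direction), and to make sure Proposition \ref{ContactOrthogonalGeneral} is invoked in the regime where it gives equality rather than a one-sided inclusion, which is exactly the horizontal case.
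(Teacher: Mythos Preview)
Your proof is correct and follows essentially the same approach as the paper: both use $\operatorname{Im}\sharp_\Lambda=\mathcal{H}$ to get horizontality, then the kernel $\langle\eta\rangle$ to obtain $\dim T_qL^{\perp_\Lambda}=2n-\dim L$ and conclude $\dim L=n$, and finish the converse by a dimension count. The paper is terser in the converse (``direct comparison of dimensions''), whereas you spell out the appeal to Proposition~\ref{ContactOrthogonalGeneral} to identify $T_qL^{\perp_\Lambda}$ with the symplectic orthogonal in $(\mathcal{H}_q,d\eta|_\mathcal{H})$; this is a welcome clarification but not a different argument.
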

\begin{proof} Since $\sharp_\Lambda$ takes values in $\mathcal{H}$, it is clear that every Legendrian submanifold is horizontal. Since $L$ is horizontal, $$\dim T_qL^{\perp_\Lambda} = \dim M - \dim L - 1.$$ From the previous equation and using that $T_qL^{\perp_\Lambda} = T_qL$, we deduce that $\dim L = \dim M - \dim L - 1$. This implies $\dim L = n$. The last property is easily seen via a direct comparison of dimensions.
\end{proof}
We will also need characterization of isotropic submanifolds in contact geometry:
\begin{lemma} \label{isotropicClassificationContact} A submanifold $N \hookrightarrow M$ is isotropic if and only if $i^* \eta = 0$.
\end{lemma}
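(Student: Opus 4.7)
The plan is to split the equivalence into two implications, with the characterization of the Jacobi anchor $\sharp_\Lambda$ recorded earlier in this section ($\ker \sharp_\Lambda = \langle \eta \rangle$ and $\operatorname{Im}\sharp_\Lambda = \mathcal{H}$) serving as the central tool. In both directions, horizontality of $N$ will be the intermediate object linking the isotropic condition to the vanishing of $i^\ast \eta$.

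For the forward direction, I would start from the hypothesis $T_qN \subseteq T_qN^{\perp_\Lambda}$. By definition $T_qN^{\perp_\Lambda} = \sharp_\Lambda(T_qN^0)$, which is contained in $\operatorname{Im}\sharp_\Lambda = \mathcal{H}_q = \ker \eta_q$. This immediately forces $T_qN \subseteq \ker \eta_q$ at every point of $N$, which is precisely $i^\ast \eta = 0$. This half is a formal consequence of the structure of $\sharp_\Lambda$ and should present no real obstacle.

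For the converse, the idea is to bootstrap from $i^\ast \eta = 0$ to the vanishing of $i^\ast d\eta$ via the identity $i^\ast d = d\, i^\ast$. Once we have both $T_qN \subseteq \mathcal{H}_q$ and $d\eta(v,w) = 0$ for all $v,w \in T_qN$ (that is, $T_qN \subseteq T_qN^{\perp_{d\eta}}$), Proposition \ref{ContactOrthogonalGeneral} applies in its equality case, precisely because $T_qN \subseteq \mathcal{H}_q$, and yields $T_qN^{\perp_\Lambda} = T_qN^{\perp_{d\eta}} \cap \mathcal{H}_q$, into which $T_qN$ is manifestly contained. Hence $N$ is isotropic.

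The only genuinely delicate point, and therefore the place I would pay closest attention to, is invoking the correct branch of Proposition \ref{ContactOrthogonalGeneral}: one must use the horizontality hypothesis $\Delta_q \subseteq \mathcal{H}_q$ to obtain equality of $\Lambda$-orthogonal and $d\eta$-orthogonal (intersected with $\mathcal{H}_q$). Without that equality, $i^\ast d\eta = 0$ alone would only give isotropy with respect to $d\eta$, not with respect to $\Lambda$. Once that observation is in place, the rest of the argument is routine bookkeeping.
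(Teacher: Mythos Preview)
Your proof is correct and follows essentially the same route as the paper's: both directions hinge on $\operatorname{Im}\sharp_\Lambda = \mathcal{H}$ for necessity, and on $i^\ast\eta = 0 \Rightarrow i^\ast d\eta = 0$ together with Proposition~\ref{ContactOrthogonalGeneral} for sufficiency. One small remark: you flag the equality case of Proposition~\ref{ContactOrthogonalGeneral} as the delicate point, but in fact only the universal inclusion $\Delta_q^{\perp_{d\eta}} \cap \mathcal{H}_q \subseteq \Delta_q^{\perp_\Lambda}$ is needed (and this is exactly what the paper uses), so the argument is even more robust than you suggest.
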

\begin{proof}Necessity is clear, since $\sharp_\Lambda$ takes values in $\mathcal{H}$. Now suppose that $N$ is horizontal. We have $i^*\eta = 0$ and thus, $i^* d\eta = 0$. This implies that $T_qN \subseteq T_qN^{\perp_{d\eta}} \cap \mathcal{H}_q \subseteq T_qN^{\perp_\Lambda},$ using \textcolor{red}{Proposition \ref{ContactOrthogonalGeneral}}.
\end{proof}	

\begin{prop} \label{ContactOrthogonalCoisotropic} Let $i:N \hookrightarrow M$ be a coisotropic submanifold such that $\mathcal{R}(q) \in T_qN$ for every $q\in N$ or $T_qN \subseteq \mathcal{H}_q$ for every $q \in N$. Define $\eta_0:= i^*\eta$. Then $$T_qN^{\perp_\Lambda} = \ker{ d\eta_0} \cap \ker \eta_0.$$
\end{prop}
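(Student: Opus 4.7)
The plan is to reduce the statement to \textcolor{red}{Proposition \ref{ContactOrthogonalGeneral}} and then translate everything through the pullback by $i$. The coisotropic hypothesis gives $T_qN^{\perp_\Lambda}\subseteq T_qN$, so both sides of the desired equality are subspaces of $T_qN$, and it suffices to compare them there.

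First I would fix $q\in N$ and apply \textcolor{red}{Proposition \ref{ContactOrthogonalGeneral}}. The hypothesis on $N$ is precisely the alternative needed for that proposition, so we obtain
\begin{equation*}
T_qN^{\perp_\Lambda} \;=\; T_qN^{\perp_{d\eta}}\cap \mathcal{H}_q.
\end{equation*}
Combining with coisotropy, a vector $v\in T_qM$ lies in $T_qN^{\perp_\Lambda}$ if and only if $v\in T_qN$, $\eta(v)=0$, and $d\eta(v,w)=0$ for every $w\in T_qN$.

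Next I would unpack $\eta_0 = i^*\eta$ and $d\eta_0 = d(i^*\eta)=i^*d\eta$. For $v,w\in T_qN$ (identified with their images under $i_\ast$) one has $\eta_0(v)=\eta(v)$ and $d\eta_0(v,w)=d\eta(v,w)$. Therefore $v\in\ker\eta_0$ is equivalent to $v\in T_qN\cap\mathcal{H}_q$, and $v\in\ker d\eta_0$ is equivalent to $v\in T_qN\cap T_qN^{\perp_{d\eta}}$. Intersecting these two conditions reproduces exactly the characterization obtained in the previous paragraph, giving the claimed equality.

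There is really no obstacle here beyond being careful about where each object lives: the right-hand side $\ker\eta_0\cap\ker d\eta_0$ is intrinsic to $N$ (a subspace of $T_qN$), while $T_qN^{\perp_\Lambda}$ is a priori a subspace of $T_qM$. The one subtle point worth flagging in the write-up is that the hypothesis ($\mathcal{R}(q)\in T_qN$ or $T_qN\subseteq\mathcal{H}_q$) is used exactly once, and only to invoke \textcolor{red}{Proposition \ref{ContactOrthogonalGeneral}}; without it, the first inclusion in that proposition would only give $\ker\eta_0\cap\ker d\eta_0\subseteq T_qN^{\perp_\Lambda}$, and the reverse inclusion could fail.
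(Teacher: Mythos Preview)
Your argument is correct and follows essentially the same route as the paper's proof: invoke \textcolor{red}{Proposition \ref{ContactOrthogonalGeneral}} under the stated hypothesis to get $T_qN^{\perp_\Lambda} = T_qN^{\perp_{d\eta}}\cap\mathcal{H}_q$, then use coisotropy to intersect with $T_qN$ and rewrite the result in terms of $\eta_0$ and $d\eta_0$. Your write-up is simply more explicit about the bookkeeping (where each object lives and how the pullback mediates between the intrinsic and ambient pictures), and your closing remark on exactly where the hypothesis is consumed is accurate and worth keeping.
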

\begin{proof} Let $q\in N$. \textcolor{red}{Proposition \ref{ContactOrthogonalGeneral}} implies that $$T_qN^{\perp_\Lambda} = T_qN^{\perp_\eta} \cap \mathcal{H}_q.$$ But, since $T_qN$ is coisotropic, then it is just $\ker d\eta_0 \cap \ker \eta_0.$ 
\end{proof}

We then have the following result:\\
\begin{prop} Let $i: N \hookrightarrow M$ be a coisotropic subamnifold such that $\mathcal{R}(q) \in T_qN$ for every $q \in N$ or $T_qN \subseteq \mathcal{H}_q$ for every $q \in N$. Then, the distribution $TN^{\perp_\Lambda}$ defined by $q \mapsto T_qN^{\perp_\Lambda}$ is involutive.
 \end{prop}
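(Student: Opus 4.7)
The plan is to use Proposition \ref{ContactOrthogonalCoisotropic}, which under the stated hypothesis on $N$ identifies the $\Lambda$-orthogonal distribution with a purely intrinsic object on $N$, namely
$$
T_qN^{\perp_\Lambda} = \ker \eta_0 \cap \ker d\eta_0, \qquad \eta_0 := i^\ast \eta.
$$
This reduces the involutivity question to a Cartan-calculus computation with $\eta_0$ alone on $N$: I only need to check that if $X,Y$ are vector fields on $N$ annihilated by both $\eta_0$ and $i_{\bullet} d\eta_0$, then so is $[X,Y]$.

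Splitting the two conditions, I would first dispatch $\eta_0([X,Y]) = 0$ using the standard identity
$$
d\eta_0(X,Y) = X\,\eta_0(Y) - Y\,\eta_0(X) - \eta_0([X,Y]),
$$
whose left-hand side vanishes because $i_X d\eta_0 = 0$ and whose first two right-hand terms vanish because $\eta_0(X) = \eta_0(Y) = 0$. That leaves $\eta_0([X,Y]) = 0$ immediately.

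For the harder condition $i_{[X,Y]}d\eta_0 = 0$, I would apply the graded commutator identity $i_{[X,Y]} = \mathcal{L}_X\, i_Y - i_Y\, \mathcal{L}_X$ to the $2$-form $d\eta_0$. The first term $\mathcal{L}_X(i_Y d\eta_0)$ vanishes outright since $i_Y d\eta_0 = 0$. For the second, I would use $d \mathcal{L}_X = \mathcal{L}_X d$ together with the Cartan formula $\mathcal{L}_X \eta_0 = d i_X \eta_0 + i_X d\eta_0$; both summands are zero by the assumptions on $X$, so $\mathcal{L}_X \eta_0 = 0$ and hence $\mathcal{L}_X d\eta_0 = d \mathcal{L}_X \eta_0 = 0$, which kills $i_Y(\mathcal{L}_X d\eta_0)$ as well.

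The main subtlety is not in the manipulation itself, which is routine once the right description of the distribution is in hand, but in the very availability of the clean characterization $TN^{\perp_\Lambda} = \ker \eta_0 \cap \ker d\eta_0$. This is precisely where the dichotomous hypotheses (either $\mathcal{R}$ tangent to $N$, or $TN \subseteq \mathcal{H}$) enter, via Proposition \ref{ContactOrthogonalCoisotropic}. Outside of those two regimes the $\Lambda$-orthogonal need not be expressible purely in terms of pulled-back contact data, and attempting a direct argument through $\Lambda$ would have to cope with the fact that $[\Lambda,\Lambda] = -2\,\mathcal{R}\wedge \Lambda \neq 0$, in contrast to the Poisson setting where closedness of the symplectic form was enough.
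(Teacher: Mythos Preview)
Your proposal is correct and follows essentially the same approach as the paper: both invoke Proposition~\ref{ContactOrthogonalCoisotropic} to rewrite $TN^{\perp_\Lambda} = \ker\eta_0 \cap \ker d\eta_0$ and then verify involutivity via Cartan calculus. The only cosmetic difference is that for the condition $i_{[X,Y]}d\eta_0 = 0$ the paper expands $d^2\eta_0(X,Y,Z) = 0$ against an arbitrary test field $Z$, whereas you use the commutator identity $i_{[X,Y]} = \mathcal{L}_X i_Y - i_Y \mathcal{L}_X$; both are equivalent one-line computations.
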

\begin{proof} Denote $\eta_0:= i^*\eta$ and let $X, Y$ be vector fields along $N$ taking values in $TN^{\perp_\Lambda}$. \textcolor{red}{Proposition \ref{ContactOrthogonalCoisotropic}} implies that $$i_X  d\eta_0 = i_Y d\eta_0 = 0;\,\,\,\, i_X \eta_0 = i_Y \eta_0 = 0.$$
It suffices to check that $$i_{[X,Y]} d\eta_0 = 0;\,\,\,\, i_{[X,Y]}\eta_0 =0.$$ Indeed, taking $Z$ an arbitrary vector field in $N$, we have
\begin{align*}
0 &= \d^2\eta_0(X,Y,Z) =  X( d\eta_0(Y,Z)) - Y( d\eta_0(X,Z)) + Z( d\eta_0(X,Y))\\
&-  d\eta_0([X,Y],Z) +  d\eta_0([X,Z],Y) - d\eta_0([Y,Z], X) = -d\eta_0([X,Y],Z),
\end{align*}
where we have used that $X,Y \in \ker d \eta_0.$ In a similar way we obtain 
\begin{align*}
0 = d\eta_0(X,Y) = X(\eta_0(Y)) - Y(\eta_0(X)) - \eta_0([X,Y]) = -\eta_0([X,Y]),
\end{align*}
that is, $[X,Y] \in \ker  d\eta_0 \cap \ker \eta_0 = TN^{\perp_\Lambda}.$
\end{proof}

\subsection{Vertical coisotropic reduction}
We will restrict the study to vertical submanifolds, that is, submanifolds satisfying $\mathcal{R}(q) \in T_qN$, for every $q \in N$. Notice that if $N$ is a coisotropic vertical submanifold, the distribution $(TN)^{\perp_\Lambda}$ is regular of rank $$\operatorname{rank} \, (TN)^{\perp_\Lambda} = \dim M - \dim N.$$
\begin{theorem}[Vertical coisotropic reduction in the contact setting] \label{VerticalReductionContact} Let $(M, \eta)$ be a contact manifold and $i :N \hookrightarrow M$ be a coisotropic submanifold such that $\mathcal{R}(q) \in T_qN$ for every $q \in N$. If the space of all leaves $N /\mathcal{F}$ admits a manifold structure such that the projection $\pi: N \rightarrow N/\mathcal{F}$ is a submersion, then there exists a unique $1$-form $\eta_N$ in $N/\mathcal{F}$ such that $\pi^*\eta_N = i_*\eta $ and $(N/\mathcal{F},\eta_N)$ is a contact manifold.
\end{theorem}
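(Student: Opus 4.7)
The plan is to follow the template of Theorem \ref{VerticalCosymplecticReduction}, except that now only a single form must descend. Uniqueness of $\eta_N$ is immediate from the relation $\pi^*\eta_N = i^*\eta$ and the fact that $\pi$ is a surjective submersion. Writing $\eta_0 := i^*\eta$, I define
$$\eta_N([v]) := \eta_0(v), \qquad [v] := T_q\pi \cdot v \in T_{[q]}(N/\mathcal{F}),$$
so the work consists of three checks: that this is independent of the representative $v$, independent of the point $q$ within the leaf, and that the resulting $1$-form satisfies the contact non-degeneracy on $N/\mathcal{F}$.

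For the first check it suffices that $\eta_0$ vanishes on $(T_qN)^{\perp_\Lambda}$, which is immediate from Proposition \ref{ContactOrthogonalCoisotropic}, asserting $(T_qN)^{\perp_\Lambda} = \ker d\eta_0 \cap \ker \eta_0$. For the second, since any two points in a leaf are joined by flows of vector fields $X$ tangent to $(TN)^{\perp_\Lambda}$, it is enough to verify $\mathcal{L}_X \eta_0 = 0$; by Cartan's formula and the same proposition, both $i_X \eta_0 = 0$ and $i_X d\eta_0 = 0$, so $\mathcal{L}_X \eta_0 = d(i_X \eta_0) + i_X d\eta_0 = 0$. Smoothness of $\eta_N$ then follows from the submersion property of $\pi$.

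The nontrivial step is verifying the contact condition. Writing $\dim M = 2n+1$ and $r := \dim M - \dim N$, the remark preceding the theorem gives $\operatorname{rank}\,(TN)^{\perp_\Lambda} = r$, so $\dim N/\mathcal{F} = 2(n-r)+1$; set $k := n - r$. Since $\pi^*(\eta_N \wedge (d\eta_N)^k) = \eta_0 \wedge (d\eta_0)^k$ and $\pi$ is a submersion, it suffices to exhibit at each $q \in N$ vectors $u_0, u_1, \ldots, u_{2k} \in T_qN$ whose projections form a basis of $T_{[q]}(N/\mathcal{F})$ and on which $\eta_0 \wedge (d\eta_0)^k$ is nonzero. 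My choice is $u_0 := \mathcal{R}(q)$ together with $u_1, \ldots, u_{2k} \in T_qN \cap \mathcal{H}_q$ projecting to a symplectic basis of the reduction of the coisotropic subspace $T_qN \cap \mathcal{H}_q$ inside the symplectic vector space $(\mathcal{H}_q, d\eta|_{\mathcal{H}_q})$. This reduction has dimension $2k$ because, by Proposition \ref{ContactOrthogonalGeneral}, the $d\eta|_{\mathcal{H}_q}$-orthogonal of $T_qN \cap \mathcal{H}_q$ coincides with $(T_qN)^{\perp_\Lambda}$, of rank $r$. Since $i_\mathcal{R} d\eta = 0$ and $\eta(\mathcal{R}) = 1$, the wedge $\eta_0 \wedge (d\eta_0)^k$ evaluated on $(u_0, u_1, \ldots, u_{2k})$ collapses to $\eta_0(\mathcal{R}(q)) \cdot (d\eta_0)^k(u_1, \ldots, u_{2k}) \neq 0$; and since $(T_qN)^{\perp_\Lambda} \subseteq \mathcal{H}_q$ while $\mathcal{R}(q) \notin \mathcal{H}_q$, the class $[\mathcal{R}(q)] \neq 0$, so the projected vectors genuinely form a basis. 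The main obstacle is precisely this final linear-algebraic step: realizing the contact reduction as a symplectic reduction of $T_qN \cap \mathcal{H}_q$ inside $(\mathcal{H}_q, d\eta|_{\mathcal{H}_q})$, with one extra Reeb direction restoring the odd dimension.
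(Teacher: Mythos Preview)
Your proof is correct and follows the same template as the paper's: uniqueness from the submersion, well-definedness from Proposition \ref{ContactOrthogonalCoisotropic} via $\mathcal{L}_X\eta_0 = 0$, and non-degeneracy by counting dimensions and reducing to the symplectic picture on $\mathcal{H}_q$. The paper's own proof is terser on the last step, simply declaring it ``straightforward using Proposition \ref{ContactOrthogonalCoisotropic}''; your explicit construction of the Reeb vector plus a symplectic basis of $(T_qN\cap\mathcal{H}_q)/(T_qN)^{\perp_\Lambda}$ is exactly the argument the paper spells out in the analogous cosymplectic case (Theorem \ref{VerticalCosymplecticReduction}), so there is no genuine methodological difference.
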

\begin{proof}
Denote $\eta_0:= i^*\eta$. Uniqueness is clear from the imposed relation since it forces us to define $$\eta_N([u]):= \eta_0(u).$$ It only remains to check well-definedness and that it defines a contact manifold. That this definition does not depend on the chosen representative vector is clear since a vector tangent to the distribution is necessarily in the kernel of $\eta$. Furthermore, if $X$ is a vector field tangent to the distribution $TN^{\perp_\Lambda}$, \textcolor{red}{Proposition \ref{ContactOrthogonalCoisotropic}} implies $$\mathcal{L}_X \eta_0 = 0 = di_X \eta_0 + i_Xd\eta_0 = 0,$$ since $X \in \ker d\eta_0 \cap \ker \eta_0$. \\

  To check that it is a contact manifold, we calculate the dimension of $N /\mathcal{F}$. We have that $\operatorname{rank} \, (T_qN)^{\perp_\Lambda} = \dim M - \dim N$. We conclude, taking $k:= \dim N$, that $$\dim N/\mathcal{F} = 2 \dim N - \dim M = 2(k-n-1) + 1$$ and therefore, $(N/\mathcal{F}, \eta_N)$ is a contact manifold if and only if $$\eta_N \wedge ( d\eta_N)^{k-n-1}\neq 0.$$ Since $\pi$ is a submersion, this is equivalent to $\eta_0 \wedge ( d\eta_0)^{k-n-1}\neq0$. This is straightforward using \textcolor{red}{Proposition \ref{ContactOrthogonalCoisotropic}}.
\end{proof}

\subsubsection{Projection of Legendrian submanifolds}
Now we check that the image of a Legendrian submanifold $L \hookrightarrow M$ under the projection $\pi: N \rightarrow N/\mathcal{F}$ is again a Legendrian submanifold.\\

\begin{prop} \label{ProjectionLegendrianVerticalContact}Let $L \hookrightarrow M$ be a Legendrian submanifold such that $L$ and $N$ have clean intersection. If $ L_N:=  \pi(L \cap N)$ is a submanifold of $N/ \mathcal{F}$, then $L_N$ is Legendrian.
\end{prop}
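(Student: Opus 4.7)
The plan is to reduce the statement to linear symplectic coisotropic reduction inside the symplectic vector space $(\mathcal{H}_q, d\eta|_\mathcal{H})$. First, I would note that since $L$ is Legendrian, by the preceding lemma $L$ is horizontal of dimension $n$, and moreover $i_L^*\eta = 0$ (by Lemma \ref{isotropicClassificationContact}), which at each $q \in L$ makes $T_qL$ a Lagrangian subspace of the symplectic vector space $(\mathcal{H}_q, d\eta|_\mathcal{H})$. On the other side, because $N$ is vertical and coisotropic, Proposition \ref{ContactOrthogonalGeneral} gives $(T_qN)^{\perp_\Lambda} = (T_qN)^{\perp_{d\eta}} \cap \mathcal{H}_q$, and writing $T_qN = (T_qN \cap \mathcal{H}_q) \oplus \langle \mathcal{R}(q)\rangle$, this symplectic complement can be identified with the $d\eta|_\mathcal{H}$-orthogonal of $T_qN_\mathcal{H} := T_qN \cap \mathcal{H}_q$ inside $\mathcal{H}_q$. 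Hence $T_qN_\mathcal{H}$ is a coisotropic subspace of the symplectic $(\mathcal{H}_q, d\eta|_\mathcal{H})$ of dimension $k-1$, where $k := \dim N$.

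Next, I would establish that $L_N$ is horizontal. Since $L$ is horizontal, $i^*\eta|_{L\cap N} = 0$; using $\pi^*\eta_N = i_N^*\eta$ from Theorem \ref{VerticalReductionContact} and the fact that $\pi_*$ is surjective on $T(L\cap N)$, this forces $\eta_N|_{L_N} = 0$, so $L_N \subseteq \mathcal{H}_N := \ker \eta_N$. In view of the lemma characterizing Legendrian submanifolds in $(N/\mathcal{F}, \eta_N)$, and since horizontal already implies isotropic (via $d\eta_N|_{L_N} = 0$), it only remains to show that $\dim L_N = k - n - 1$, which is the Legendrian dimension in the contact manifold $N/\mathcal{F}$ of dimension $2(k-n-1)+1$.

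For the dimension count, I would use clean intersection and horizontality of $L$ to write $T_q(L\cap N) = T_qL \cap T_qN = T_qL \cap T_qN_\mathcal{H}$. Since $\ker d_q\pi = (T_qN)^{\perp_\Lambda} \subseteq T_qN$, the rank-nullity theorem applied to $\pi|_{L\cap N}$ gives
\[
\dim L_N = \dim(L \cap N) - \dim\bigl(T_qL \cap (T_qN_\mathcal{H})^{\perp_{d\eta|_\mathcal{H}}}\bigr).
\]
Now inside the symplectic space $\mathcal{H}_q$, the Lagrangian character of $T_qL$ gives $(T_qL + T_qN_\mathcal{H})^{\perp_{d\eta|_\mathcal{H}}} = T_qL \cap (T_qN_\mathcal{H})^{\perp_{d\eta|_\mathcal{H}}}$. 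Combining this with the standard identity $\dim(T_qL\cap T_qN_\mathcal{H}) + \dim(T_qL + T_qN_\mathcal{H}) = n + (k-1)$ and the symplectic-complement formula $\dim V + \dim V^{\perp_{d\eta|_\mathcal{H}}} = 2n$, a short algebraic manipulation yields $\dim L_N = k - n - 1$.

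The main obstacle is handling the orthogonal-complement algebra correctly: because the ambient structure is only Jacobi, one must consistently translate the $\Lambda$-orthogonal on $M$ into the $d\eta|_\mathcal{H}$-orthogonal inside $\mathcal{H}_q$ (using verticality of $N$ via Proposition \ref{ContactOrthogonalGeneral}), and carefully verify that clean intersection in $M$ descends to clean intersection in the symplectic leaf $\mathcal{H}_q$. Once this translation is made, the result reduces to the purely symplectic Proposition \ref{LagrangianProjectionSymplectic} applied fiberwise, and the dimension count closes the argument.
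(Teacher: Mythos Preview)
Your proposal is correct and follows essentially the same two-step strategy as the paper: first show $L_N$ is horizontal (hence isotropic via Lemma~\ref{isotropicClassificationContact}), then carry out the dimension count to conclude $\dim L_N = k-n-1$.

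The only presentational difference is that you explicitly translate the $\Lambda$-orthogonal into the symplectic $d\eta|_\mathcal{H}$-orthogonal inside $\mathcal{H}_q$ (via Proposition~\ref{ContactOrthogonalGeneral}) and then invoke linear symplectic reduction, whereas the paper performs the same algebra directly in terms of the $\Lambda$-orthogonal identities $(A\cap B)^{\perp_\Lambda} = A^{\perp_\Lambda} + B^{\perp_\Lambda}$ together with the dimension formula for horizontal subspaces. Your route makes the reduction to Proposition~\ref{LagrangianProjectionSymplectic} transparent; the paper's route avoids introducing the auxiliary subspace $T_qN_\mathcal{H}$. Both computations unwind to the same Grassmann-formula manipulation and reach $\dim L_N = k-n-1$.
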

\begin{proof} It suffices to check that $L_N$ is horizontal, isotropic and $\dim L_N = \dim N - n - 1$ using \textcolor{red}{Lemma \ref{isotropicClassificationContact}}.  Since $L$ is horizontal, $L_N$ is horizontal and thus, $L_N$ is isotropic.\\

Comparing dimensions, we have
\begin{equation}\label{eq1contact}
\dim T_{[q]}L_N = \dim T_qL\cap T_qN - \dim T_qL \cap T_qN^{\perp_\Lambda}.
\end{equation}
Now, since $(T_qL \cap T_qN^{\perp_\Lambda})^{\perp_\Lambda} = T_qL + (T_qN^{\perp_\Lambda})^{\perp_\Lambda}$ and $(T_qL \cap T_qN)^{\perp_\Lambda}$ is horizontal, we have 
\begin{equation}\label{eq2contact}
\dim( T_qL + (T_qN^{\perp_\Lambda})^{\perp_\Lambda}) = \dim M - \dim(T_qL \cap T_qN^{\perp_\Lambda}) - 1.
\end{equation}
Using $\dim(T_qL \cap T_qN^{\perp_\Lambda}) = \dim(T_qL + T_qN) - 1$ and substituting (\ref{eq2contact})) in (\ref{eq1contact}), we obtain
\begin{align*}
\dim L_N& = \dim L \cap N - \left( \dim M - \dim(T_qL + T_qN )\right) \\
&= \dim L \cap N - \dim M + \dim N + \dim L - \dim L \cap N \\
&= \dim N - 2n - 1 + n = \dim N - n -1.
\end{align*}
\end{proof}

\subsection{Horizontal coisotropic reduction}
We will restrict the study to \textbf{horizontal} coisotropic submanifolds $N \rightarrow M$, that is, manifolds satisfying $T_qN \subseteq \mathcal{H}_q$ for every $q\in N$. \\
\begin{remark}{\rm  Notice that in this case reduction is trivial, since the only coisotropic horizontal submanifolds of a contact manifold are those that are Legendrian. This would imply $$\dim N/\mathcal{F} = 0,$$ making the resulting manifold trivial.}
\end{remark}



Given an arbitrary coisotropic submanifold $N \hookrightarrow M$, we cannot guarantee the well-definedness of the $2$-form in the quotient $N/\mathcal{F}$ (actually, in the contact setting, we cannot even guarantee the integrability of $TN^{\perp_\Lambda}$) so this time (referring to horizontal reduction in cosymplectic geometry) we cannot obtain a foliation of $N$ in symplectic leaves, since $TN \cap \mathcal{H}|N$ is not integrable in the general setting. \\

\begin{remark}{\rm The triviality of this case makes the projection of Lagrangian submanifolds trivial.}
\end{remark}



\section{An interlude: Different geometries provide different dynamics}\label{interlude}

In this section we will show how different geometric structures on the same phase space can provide different dynamics for the same Lagrangian or Hamiltonian function. This fact explains the different equations of motion between the cosymplectic case (time-dependent Lagrangians) and the contact case (action-dependent Lagrangians). At the end of the section we will also see that Hamilton's principle can be generalised to the so-called Herglotz principle, which gives a new way to obtain these different dynamics.

\subsection{The Lagrangian picture}

In order to distinguish the different dynamics discussed in this paper and their physical nature, it is useful to recall their Lagrangian formulation.

Assume that $L : TQ  \longrightarrow \mathbb R$ is a Lagrangian function, 
where $Q$ is a $n$-dimensional configuration manifold. Then, $L = L(q^i, \dot{q}^i)$, where
$(q^i)$ are coordinates in $Q$ and $(q^i, \dot{q}^i)$ are the induced bundle coordinates in $TQ$ (positions and velocities).
We will assume that $L$ is regular, that is, the Hessian matrix
$$
\left( \frac{\partial^2 L}{\partial \dot{q}^i \partial \dot{q}^j} \right)
$$
is regular.
Using the canonical endomorphism $S$ on $TQ$ locally defined by
$$
S = d q^i \otimes \frac{\partial}{\partial \dot{q}^i}
$$
one can construct a 1-form $\lambda_L$ defined by
$$
\lambda_L = S^* (dL)
$$
and the 2-form
$$
\omega_L = - d\lambda_L
$$
Then, $\omega_L$ is symplectic if and only if $L$ is regular.

In that case, we have the corresponding vector bundle isomorphism
\begin{eqnarray*}
&&\flat_{\omega_L} :  T(TQ) \longrightarrow T^*(TQ)\\
&&\flat_{\omega_L} (v) = i_v \, \omega_L 
\end{eqnarray*}
and the Hamiltonian vector field
$$
\xi_L = X_{E_L},
$$
defined by 
$$
\flat_{\omega_L}(\xi_L) = dE_L,
$$
where $E_L = \Delta(L) -L$ is the energy, and $\Delta = \dot{q}^i \, \frac{\partial}{\partial \dot{q}^i}$ is the Liouville vector field on $TQ$.\\

The vector field $\xi_L$ (the Euler-Lagrange vector field) is a SODE (second order differential equation), that is, its integral curves are just the tangent lifts of its projections to $Q$. These projections are called the solutions of $\xi_L$, and satisfy the usual Euler-Lagrange equations
\begin{equation}
\frac{d}{dt} \left(\frac{\partial L}{\partial \dot{q}^i}\right) - \frac{\partial L}{\partial q^i} = 0.
\end{equation}

Next, we  recall here the geometric formalism for time-dependent Lagrangian systems.
In this case, we also have a regular Lagrangian $L : TQ \times \mathbb R \longrightarrow \mathbb R$,
and we consider the cosymplectic structure given by the pair
$(\Omega_L, dz)$ (in this case $z$ represents \textit{time}), where
$$
\Omega_L = - d \lambda_L
$$
It is esay to check that $L$ is regular if and only if
$$
dz \wedge \Omega_L^n \not= 0.
$$
In that case, we have a cosymplectic structure and, defining $(W^{ij})$ to be the inverse matrix of $$(W_{ij}) = \left( \frac{\partial^2 L}{\partial \dot{q}^i \partial \dot{q}^j} \right),$$ the Reeb vector field of the cosymplectic manifold $(TQ, \Omega_L, dz)$ is locally given by
$$
\mathcal R = \frac{\partial}{\partial z} - W^{ij} \frac{\partial^2 L}{\partial \dot{q}^j \partial z} \, \frac{\partial}{\partial \dot{q}^i}.
$$

Recall that we had the vector bundle isomorphism
\begin{eqnarray*}
&&{\flat}_{dz, \Omega_L}  :  T(TQ \times \mathbb R) \longrightarrow T^*(TQ \times \mathbb R)\\
&&{\flat}_{dz, \Omega_L} (v) = i_v \, \Omega_L + dz (v) \, dz
\end{eqnarray*}
which gives the vector fields defined in \textcolor{red}{Section \ref{Cosymplectic}}. In particular, we have the evolution vector field $$\mathcal{E}_{E_L} = X_{E_L} + \mathcal{R},$$ where $E_L$ is the energy of the system, $$E_L = \Delta(L) - L = \dot{q}^i \partder{L}{q^i} - L.$$

Now, if $(q^i(t), \dot{q}^i(t), z(t))$ is an integral curve of ${\mathcal E}_L$, then its projection to $Q$ satisfies the usual Euler-Lagrange equations
\begin{equation}\label{cosylagr3}
\frac{d}{dt} \left(\frac{\partial L}{\partial \dot{q}^i}\right) - \frac{\partial L}{\partial q^i} = 0,
\end{equation}
since $z = t + constant$. 

Finally, let $L : TQ \times \mathbb R \longrightarrow \mathbb R$ be a Lagrangian function, $L = L(q^i, \dot{q}^i, z)$, where $z$ is a global coordinate on $\mathbb{R}$, representing \textit{action}.

We will assume that $L$ is regular, that is, the Hessian matrix
$$
\left( \frac{\partial^2 L}{\partial \dot{q}^i \partial \dot{q}^j} \right)
$$
is regular. So, we construct a 1-form $\lambda_L$ defined by
$$
\lambda_L = S^* (dL)
$$
where now $S$ and $S^*$ are the natural extension of $S$.

Now, the 1-form
$$
\eta_L = dz -  \frac{\partial L}{\partial \dot{q}^i} \, dq^i
$$
is a contact form on $TQ \times \mathbb R$ if and only if $L$ is regular, and then
$$
\eta_L \wedge (d\eta_L)^n \not= 0.
$$ 
The corresponding Reeb vector field is
$$
{\mathcal R} = \frac{\partial}{\partial z} - W^{ij} \frac{\partial^2 L}{\partial \dot{q}^j \partial z} \, \frac{\partial}{\partial \dot{q}^i}.
$$
The energy of the system is defined as in the precedent cases by 
$$
E_L = \Delta (L) - L
$$
where $\Delta = \dot{q}^i \, \frac{\partial}{\partial \dot{q}^i}$ is the Liouville vector field on $TQ$ extended in the usual way to $TQ \times \mathbb R$. The vector bundle isomorphism of contact manifolds defined in \textcolor{red}{Section \ref{Contact}} is given by
$$
{\flat_{\eta_L}} : T(TQ \times \mathbb R) \longrightarrow T^* (TQ \times \mathbb R);\,\,\,
{\flat_{\eta_L}} (v) = i_v (d\eta_L) + (i_v \eta_L) \cdot \eta_L.
$$
We shall denote its inverse by ${\sharp_{\eta_L}} = ({\flat_{\eta_L})}^{-1}$.

Denote by ${\xi}_L$ the unique vector field defined by the equation
\begin{equation}\label{clagrangian1}
{\flat_{\eta_L}} ({\xi}_L) = dE_L - (\mathcal R(E_L) + E_L) \, \eta_L
\end{equation}
Note that this is precisely the Hamiltonian vector field of $E_L$ defined in \textcolor{red}{Section \ref{Contact}}.
A direct computation from eq. (\ref{clagrangian1}) shows that,
if $(q^i(t), \dot{q}^i(t), z(t))$ is an integral curve of $\bar{\xi}_L$, we obtain
$$
{\ddot{q}}^i \, \frac{\partial}{\partial \dot{q}^i}\left(\frac{\partial L}{\partial \dot{q}^j}\right) 
+ \dot{q}^i \, \frac{\partial}{\partial q^i}\left(\frac{\partial L}{\partial \dot{q}^j}\right) 
+  \dot{z} \frac{\partial}{\partial z}\left(\frac{\partial L}{\partial \dot{q}^i}\right) - \frac{\partial L}{\partial q^i} =
\frac{\partial L}{\partial \dot{q}^i} \frac{\partial L}{\partial z}
$$
with an additional equation $\dot{z} = L$. These equations
correspond to the generalized Euler-Lagrange equations considered by G. Herglotz in 1930.
\begin{equation}\label{clagrangian4}
\frac{d}{dt} \left(\frac{\partial L}{\partial \dot{q}^i}\right) - \frac{\partial L}{\partial q^i} =
\frac{\partial L}{\partial \dot{q}^i} \frac{\partial L}{\partial z}
\end{equation}
Notice that Herglotz equations depend on the action, so this type of Lagrangians are called in physics \textit{dependent on the action}.

\subsection{The Herglotz principle}

In order to give additional differences between the usual Hamilton principle and Herglotz principle, it is interesting to recall briefly the last one and how it is a natural generalization of the former one.

Let $L:TQ \times \mathbb{R} \to \mathbb{R}$ be a Lagrangian function. 

Fix $q_1,q_2 \in Q$ and an interval $[a,b] \subset \mathbb{R}$. We denote by $\Omega(q_1,q_2, [a,b]) \subseteq({\cal C}^\infty([a,b]\to Q))$ the space of smooth curves $\xi$ such that $\xi(a)=q_1$ and $\xi(b)=q_2$. This space has the structure of an infinite dimensional smooth manifold whose tangent space at $\xi$ is given by the set of vector fields over $\xi$ that vanish at the endpoints, that is,
\begin{equation}
\begin{aligned}
        T_\xi \Omega(q_1,q_2, [a,b]) =  \{&
            v_\xi \in {\cal C}^\infty([a,b] \to TQ) \mid \\& 
            \tau_Q \circ v_\xi = \xi, \,v_\xi(a)=0, \, v_\xi(b)=0 
            \}.
\end{aligned}
\end{equation}

We will consider the following maps. Fix an initial action $c \in \mathbb{R}$. Let 
\begin{equation}
    \mathcal{Z}:\Omega(q_1,q_2, [a,b])  \to {\cal C}^\infty ([a,b] \to \mathbb{R})
\end{equation}
 be the operator that assigns to each curve $\xi$ the curve $\mathcal{Z}(\xi)$ that solves the following ODE:
\begin{equation}\label{contact_var_ode}
    \frac{d \mathcal{Z}(\xi)(t)}{dt} = L(\xi(t), \dot \xi(t), \mathcal{Z}(\xi)(t)), \quad \mathcal{Z}(\xi)(a)= c.
\end{equation}

Now we define the \emph{action functional} as the map which assigns to each curve the solution to the previous ODE evaluated at the endpoint:
\begin{equation}\label{contact_action}
    \begin{aligned}
        \mathcal{A}: \Omega(q_1,q_2, [a,b]) &\to \mathbb{R},\\
        \xi &\mapsto \mathcal{Z}(\xi)(b),
    \end{aligned}
\end{equation}
that is, $\mathcal{A} = ev_b \circ \mathcal{Z}$, where $ev_b: \zeta \mapsto \zeta(b)$ is the evaluation map at $b$. We have\\

\begin{theorem}
    Let $L: TQ \times \mathbb{R} \to \mathbb{R}$ be a Lagrangian function and let $\xi\in  \Omega(q_1,q_2, [a,b])$ be a curve in $Q$. Then, $(\xi,\dot\xi, \mathcal{Z}(\xi))$ satisfies the Herglotz's equations  if and only if $\xi$ is a critical point of $\mathcal{A}$.
\end{theorem}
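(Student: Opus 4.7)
The plan is to compute the first variation of $\mathcal{A}$ directly and rearrange it, via integrating factor and integration by parts, into a form from which the Herglotz equations drop out by the fundamental lemma of the calculus of variations.

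First I would fix a smooth one-parameter family $\{\xi_\varepsilon\} \subset \Omega(q_1,q_2,[a,b])$ with $\xi_0 = \xi$ and call $\delta\xi = \partial_\varepsilon\xi_\varepsilon|_{\varepsilon=0}$; by definition $\delta\xi(a) = \delta\xi(b) = 0$. Set $Z_\varepsilon := \mathcal{Z}(\xi_\varepsilon)$ and $\delta Z := \partial_\varepsilon Z_\varepsilon|_{\varepsilon=0}$. Differentiating the defining ODE (\ref{contact_var_ode}) in $\varepsilon$ and evaluating at $\varepsilon = 0$ produces the linear inhomogeneous ODE
\begin{equation*}
\frac{d}{dt}\delta Z \;=\; \frac{\partial L}{\partial q^i}\,\delta\xi^i \;+\; \frac{\partial L}{\partial \dot q^i}\,\delta\dot\xi^i \;+\; \frac{\partial L}{\partial z}\,\delta Z, \qquad \delta Z(a) = 0,
\end{equation*}
where all partial derivatives of $L$ are evaluated along $(\xi,\dot\xi,\mathcal{Z}(\xi))$. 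This is the key identity, and solving it explicitly turns $\delta\mathcal{A} = \delta Z(b)$ into a usable functional.

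Next I would introduce the integrating factor
\begin{equation*}
\sigma(t) \;:=\; \exp\!\left(-\int_a^t \frac{\partial L}{\partial z}(\xi(s),\dot\xi(s),\mathcal{Z}(\xi)(s))\,ds\right),
\end{equation*}
so that $\tfrac{d}{dt}(\sigma\,\delta Z) = \sigma\bigl(\tfrac{\partial L}{\partial q^i}\delta\xi^i + \tfrac{\partial L}{\partial \dot q^i}\delta\dot\xi^i\bigr)$. Integrating from $a$ to $b$ and using $\delta Z(a)=0$ I obtain
\begin{equation*}
\sigma(b)\,\delta Z(b) \;=\; \int_a^b \sigma\left(\frac{\partial L}{\partial q^i}\,\delta\xi^i + \frac{\partial L}{\partial \dot q^i}\,\delta\dot\xi^i\right)dt.
\end{equation*}
I would then integrate the second term by parts, using that the boundary contributions vanish because $\delta\xi(a)=\delta\xi(b)=0$, and use $\dot\sigma = -\sigma\,\tfrac{\partial L}{\partial z}$ to collect the resulting expression into
\begin{equation*}
\sigma(b)\,\delta Z(b) \;=\; \int_a^b \sigma\left(\frac{\partial L}{\partial q^i} - \frac{d}{dt}\frac{\partial L}{\partial \dot q^i} + \frac{\partial L}{\partial z}\frac{\partial L}{\partial \dot q^i}\right)\delta\xi^i\, dt.
\end{equation*}

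Finally, since $\sigma(b) > 0$, the condition $\delta\mathcal{A}(\xi)(\delta\xi) = \delta Z(b) = 0$ for every admissible variation $\delta\xi$ is equivalent, by the fundamental lemma of the calculus of variations, to the vanishing of the bracketed expression, which is exactly the Herglotz equation (\ref{clagrangian4}). Conversely, if $\xi$ satisfies Herglotz's equations then the identity above shows $\delta Z(b) = 0$ for any variation, so $\xi$ is a critical point of $\mathcal{A}$. I expect the main technical obstacle to be justifying the smoothness of $\varepsilon \mapsto Z_\varepsilon$ and the exchange of the $\varepsilon$-derivative with the ODE flow in a clean way; once this is set up, the rest reduces to the integrating-factor computation outlined above.
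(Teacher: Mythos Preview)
Your argument is correct and is the standard proof of the Herglotz variational principle: linearize the action ODE in the deformation parameter, kill the $\partial L/\partial z\,\delta Z$ term with an exponential integrating factor, integrate by parts, and invoke the fundamental lemma. The paper itself does not prove this theorem at all; it is stated without proof, with a reference to \cites{miguel} earlier in the section. So there is nothing to compare against, and your proposal in fact supplies the missing argument cleanly.
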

\bigskip
\begin{remark}{\rm
    This theorem generalizes Hamilton's Variational Principle. In the case that the Lagrangian is independent of the $\mathbb{R}$ coordinate (i.e., $L(q^i,\dot{q}^i,z)=\hat{L}(q^i, \dot{q}^i)$)  
the contact Lagrange equations reduce to the usual Euler-Lagrange equations. In this situation, we can integrate the ODE of (\ref{contact_action}) and we get
    \begin{equation}
    \mathcal{A}(\xi) = \int_a^b \hat L(\xi(t),\dot\xi(t))d t + \frac{c}{b-a},
    \end{equation}
    that is, the usual Euler-Lagrange action up to a constant.}
\end{remark}

\subsection{The Legendre transformation and the Hamiltonian picture}

Given a Lagrangian function $L : TQ \times \mathbb R \longrightarrow \mathbb R$ we can define the Legendre transformation 
$$
\mathbb{F}L : TQ \times \mathbb R \longrightarrow T^*Q \times \mathbb R
$$
given by
$$
\mathbb{F}L (q^i, \dot{q}^i, z) = (q^i, \hat{p}_i, z)
$$
where
$$
\hat{p}_ i = \frac{\partial L}{\partial \dot{q}^i}
$$
A direct computation shows that
$$
\mathbb{F}L ^* \lambda_Q = \lambda_L,
$$
where $\lambda_Q$ is the canonical Liouville form on $T^*Q$, here extended to the product manifold $ T^*Q \times \mathbb R$. If the Lagrangian does not depend on time, then the Legendre transformation reduces to a bundle morphism
$$
\mathbb{F}L : TQ \longrightarrow T^*Q 
$$
and then the pull-back of the canonical symplectic form $\omega_Q$ on $T^*Q$ is just $\omega_L$. In addition, the energy $E_L$ corresponds via the Legendre transformation to the Hamiltonian energy $H$ such that
$$
(\mathbb{F}L)^*(H) = E_L.
$$
Since the corresponding geometric structures (symplectic, cosymplectic and contact) on both sides are preserved by the Legendre transformation, one concludes that corresponding dynamics are $\mathbb{F}L$-related (see \cites{canaria} for the details).

\section{Coisotropic reduction in cocontact geometry}\label{Cocontact}
\setsectiontitle{COISOTROPIC REDUCTION IN COCONTACT GEOMETRY}

Cocontact manifolds have been introduced in \cites{de2022time} just to provide a setting for dissipative systems which also depend on time. In geometric terms, we are combining cosymplectic and contact structures.\\

\begin{Def}[Cocontact manifold] A cocontact manifold is a triple $(M,\theta, \eta)$, where $M$ is a $(2n + 2)$-dimensional manifold, $\theta$ is a closed $1$-form, $\eta$ is a $1$-form  and, $\theta \wedge \eta \wedge (d\eta)^n \neq 0$ is a volume form.
\end{Def}

The bundle isomorphism in this case is defined as $$ \flat_{\theta, \eta}: TM \rightarrow T^*M;\,\, v \mapsto \theta(v)\theta + i_{v}d\eta + \eta(v) \eta,$$ and its inverse is denoted by $\sharp_{\theta,\eta} = \flat_{\theta,\eta}^{-1}.$\\

In cocontact geometry there exists as well a set of canonical coordinates $(q^i,p_i,z,t)$, which will be called Darboux coordinates, such that $$\eta = dz - p_i dq^i; \,\, \theta = dt.$$ We can define as well the Reeb vector fields as $$\mathcal{R}_z:= \sharp_{\theta, \eta}(\eta); \,\, \mathcal{R}_t = \sharp_{\theta, \eta}(\theta),$$ which can be expressed locally as $$\mathcal{R}_z = \partder{}{z}; \,\, \mathcal{R}_t = \partder{}{t}.$$ We also have vertical and horizontal distributions:
\begin{itemize}
    \item[$i)$] The \textbf{$z$-horizontal} distribution, $\mathcal{H}_z := \ker \eta$;
    \item[$ii)$] The \textbf{$t$-horizontal} distribution, $\mathcal{H}_t := \ker \theta$;
    \item[$iii)$] The \textbf{$tz$-horizontal} distribution $\mathcal{H}_{tz} := \mathcal{H}_t \cap \mathcal{H}_z$;
    \item[$iv)$] The \textbf{$t$-vertical} distribution, $\mathcal{V}_t := \langle \mathcal{R}_t \rangle$;
    \item[$v)$] The \textbf{$z$-vertical} distribution, $\mathcal{V}_z := \langle \mathcal{R}_z \rangle.$
\end{itemize}

\subsection[Hamiltonian vector fields as Lagrangian submanifolds]{Hamiltonian vector fields as Lagrangian and Legendrian submanifolds}

Just like in previous sections, define the \textbf{gradient vector field} of certain Hamiltonian $H\in \mathcal{C}^\infty(M)$ as $$ \grad H = \sharp_{\theta, \eta}(dH).$$ Locally, the gradient vector field is expressed: $$\grad H = \partder{H}{p_i} \partder{}{q^i} - \left (\partder{H}{q^i} + p_i\partder{H}{z} \right) \partder{}{p_i} + \left( p_i \partder{H}{p_i} + \partder{H}{z}\right ) \partder{}{z} + \partder{H}{t} \partder{}{t}.$$ We can define a symplectic structure in $TM$ taking $$\Omega_0 := \flat_{\theta, \eta} ^* \Omega_M,$$ where $\Omega_M$ is the canonical symplectic form on the cotangent bundle. In the induced coordinates $(q^i, p_i, z, \dot{q}^i, \dot{p}_i, \dot{z}),$ $\Omega_0$ takes the form 
\begin{align*}
    \Omega_0 = \,& dq^i \wedge \left (p_ip_i dq^j + (1 + \delta_i^j)p_i \dot{q}^j dp_j - \dot{z} dp_i - d\dot{p}_i - p_i d\dot{z}\right) \\
     & + dp_i \wedge d\dot{q}^i  + dz \wedge \left( d\dot{z} - p_id\dot{q}^i - \dot{q}^i dp_i\right) + dt \wedge d\dot{t}.
\end{align*}

It is easy to verify that a vector field $X: M \rightarrow TM$ is a locally gradient vector field if and only if it defines a Lagrangian submanifold in $(TM , \Omega_0).$\\

\begin{Def}[Hamiltonian vector field] Given a Hamiltonian $H$ on $M$, define its \textbf{Hamiltonian vector field} as $$X_H:= \sharp_{\theta, \eta}(dH) - (\mathcal{R}_z(H) + H) \mathcal{R}_z + (1 - \mathcal{R}_t(H)) \mathcal{R}_t.$$
\end{Def}

The Hamiltonian vector field has the local expression 
$$X_H = \partder{H}{p_i} \partder{}{q^i} - \left (\partder{H}{q^i} + p_i\partder{H}{z} \right) \partder{}{p_i} + \left( p_i \partder{H}{p_i} - H\right ) \partder{}{z} + \partder{}{t}.$$
Integral curves of this vector field satisfy
\begin{align*}
\totder{q^i}{\lambda} &= \partder{H}{p_i},\\
\totder{p_i}{\lambda} &= - \partder{H}{q^i} - p_i \partder{H}{z},\\
\totder{z}{\lambda} &= p_i \partder{H}{p_i} - H,\\
\totder{t}{\lambda} &= 1,
\end{align*}
which are equivalent to the time-dependent version of the contact equations:
\begin{align*}
\totder{q^i}{t} &= \partder{H}{p_i},\\
\totder{p_i}{t} &= - \partder{H}{q^i} - p_i \partder{H}{z},\\
\totder{z}{t} &= p_i \partder{H}{p_i} - H.
\end{align*}
In general, $X_H$ does not define a Lagrangian submanifold of $(TM, \Omega_0)$; but, just like in the cosymplectic and contact scenario, we can achieve this by modifying the symplectic form. Indeed, since $$X_H^* \Omega_0 = - dX_H^{\flat_{\theta, \eta}} =   d\left((\mathcal{R}_z(H) + H) \eta\right) - d\left ((1 - \mathcal{R}_t(H)) \theta \right),$$ defining $$ \Omega_H := \Omega_0 - \d((\mathcal{R}_z(H) + H) \eta) + \d((1 - \mathcal{R}_t(H)) \theta),$$ we have that $X_H$ defines a Lagrangian submanifold of $(TM, \Omega_H).$\\

Now we interpret the Hamiltonian vector field $X_H$ as a Legendrian submanifold of $TM \times \mathbb{R} \times \mathbb{R}$ with the cocontact structure given by the forms $$\widetilde \eta := \eta^c + s \eta^v + \theta^c + e \theta^v; \,\,\, \widetilde \theta = \theta^c,$$ where $(s,e)$ are the parameters in $\mathbb{R} \times \mathbb{R}$. In local coordinates $(q^i, p_i, z, t, \dot{q}^i, \dot{p}_i, \dot{z}, \dot{t}, s,e)$, these forms have the expression:
\begin{align*}
\widetilde \eta &= d\dot{z} - \dot{p}_i dq^{i} - p_i d\dot{q}^i + s dz - sp_i dq^i + d\dot{t} + e dt, \\
\widetilde \theta &= d\dot{t}.
\end{align*}

It is easy to see that these forms define a cocontact structure. Now, given a vector field $X: M \rightarrow TM$ and two functions $f, g$ on $M$, define $$X \times f \times g: M \rightarrow TM \times \mathbb{R} \times \mathbb{R}; \,\,\, p \mapsto (X(p), f(p), g(p)).$$
Applying the properties of complete and vertical lifts, namely $X^*\alpha^c = \mathcal{L}_X(\alpha)$, we have
$$(X \times f \times g)^* \widetilde \eta = \mathcal{L}_X \eta + f \eta + \mathcal{L}_X \theta + g \theta$$ and $$(X \times f \times g)^*\widetilde \theta = d\theta(X).$$

\begin{prop} Let $H$ be a Hamiltonian on $M$. Then $X_H \times \mathcal{R}_z(H) \times 0$ defines a Legendrian submanifold of $(TM \times \mathbb{R} \times \mathbb{R}, \widetilde \theta, \widetilde \eta).$
\end{prop}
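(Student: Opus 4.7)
My plan is to follow the pattern used in the analogous contact case of \textcolor{red}{Section \ref{Contact}}, leveraging the pullback identities recorded in the excerpt just above the statement:
\[
(X \times f \times g)^* \widetilde\eta = \mathcal{L}_X \eta + f\eta + \mathcal{L}_X \theta + g\theta, \qquad (X \times f \times g)^* \widetilde\theta = \mathcal{L}_X \theta.
\]
Specializing to $X = X_H$, $f = \mathcal{R}_z(H)$ and $g = 0$, the proposition reduces to evaluating $\mathcal{L}_{X_H}\eta$ and $\mathcal{L}_{X_H}\theta$ intrinsically, and then matching the resulting pullbacks against the cocontact Legendrian condition; the dimension count $\dim M = 2n + 2$ already provides the maximality needed inside the cocontact manifold of dimension $4n + 6$.

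The core computation proceeds by Cartan's formula, supported by four structural identities. From $\flat_{\theta,\eta}(\mathcal{R}_z) = \eta$ and $\flat_{\theta,\eta}(\mathcal{R}_t) = \theta$ one extracts $\eta(\mathcal{R}_z) = \theta(\mathcal{R}_t) = 1$, $\eta(\mathcal{R}_t) = \theta(\mathcal{R}_z) = 0$, and $i_{\mathcal{R}_z} d\eta = i_{\mathcal{R}_t} d\eta = 0$. Pairing the tautology $dH = \flat_{\theta,\eta}(\sharp_{\theta,\eta}(dH))$ with $\mathcal{R}_z$ and $\mathcal{R}_t$ yields $\eta(\sharp_{\theta,\eta}(dH)) = \mathcal{R}_z(H)$ and $\theta(\sharp_{\theta,\eta}(dH)) = \mathcal{R}_t(H)$. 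Plugging these into the definition of $X_H$ gives $i_{X_H}\eta = -H$ and $i_{X_H}\theta = 1$, so that $\mathcal{L}_{X_H}\theta = 0$ by closedness of $\theta$. Moreover, the decomposition $dH = \mathcal{R}_t(H)\theta + i_{\sharp_{\theta,\eta}(dH)}d\eta + \mathcal{R}_z(H)\eta$ delivers $i_{X_H} d\eta = dH - \mathcal{R}_t(H)\theta - \mathcal{R}_z(H)\eta$, and Cartan's formula then produces $\mathcal{L}_{X_H}\eta = -\mathcal{R}_z(H)\eta - \mathcal{R}_t(H)\theta$.

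Substituting back into the pullback identities gives explicit expressions for $i^*\widetilde\eta$ and $i^*\widetilde\theta$ on $M$, and the Legendrian conclusion follows by invoking the cocontact analogue of \textcolor{red}{Lemma \ref{isotropicClassificationContact}} together with the already-noted dimension match. The main obstacle I foresee is the reconciliation of the leftover term proportional to $\theta$ (which had no counterpart in the contact case of the preceding section) with the cocontact notion of Legendrian; in other words, one must formulate, and then apply, the correct isotropy characterization that treats the two closed $1$-forms $\eta$ and $\theta$ asymmetrically, so that this residual contribution is consistent with the cocontact Legendrian condition and the dimension count then closes the argument.
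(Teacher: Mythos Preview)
Your approach mirrors the paper's almost exactly: both use the pullback identities for $(X\times f\times g)^*\widetilde\eta$ and $(X\times f\times g)^*\widetilde\theta$, then reduce the question to computing $\mathcal{L}_{X_H}\eta$ and $\mathcal{L}_{X_H}\theta$ and invoking \textcolor{red}{Lemma \ref{ClassificationLegendrianCocontact}}. Your intrinsic derivation is more careful than the paper's terse one, and in fact your result
\[
\mathcal{L}_{X_H}\eta=-\mathcal{R}_z(H)\,\eta-\mathcal{R}_t(H)\,\theta
\]
is \emph{correct} (one checks it immediately in Darboux coordinates). The paper, by contrast, asserts $\mathcal{L}_{X_H}\eta=-\mathcal{R}_z(H)\,\eta$, which is only true when $\mathcal{R}_t(H)=0$; this appears to be an oversight in the paper.

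The gap in your proposal is the resolution you suggest for the leftover $-\mathcal{R}_t(H)\,\theta$. There is no ``asymmetric'' Legendrian criterion to appeal to: \textcolor{red}{Lemma \ref{ClassificationLegendrianCocontact}} requires \emph{both} $i^*\widetilde\theta=0$ and $i^*\widetilde\eta=0$, and with $f=\mathcal{R}_z(H)$, $g=0$ you obtain $i^*\widetilde\eta=-\mathcal{R}_t(H)\,\theta$, which does not vanish for a general Hamiltonian. Hence the statement as written cannot be saved by a refined characterization. The clean fix is in the \emph{statement}, not the argument: taking $g=\mathcal{R}_t(H)$, i.e.\ using the immersion $X_H\times\mathcal{R}_z(H)\times\mathcal{R}_t(H)$, your pullback identities yield $i^*\widetilde\eta=0$ and $i^*\widetilde\theta=\mathcal{L}_{X_H}\theta=0$, and then the dimension count plus \textcolor{red}{Lemma \ref{ClassificationLegendrianCocontact}} finishes the proof exactly as you outlined.
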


 \begin{proof} Using the observation above and \textcolor{red}{Lemma \ref{ClassificationLegendrianCocontact}}, it is sufficient to observe that $$\mathcal{L}_{X_H}\eta = - \mathcal{R}_z(H) \eta, \,\, \mathcal{L}_{X_H}\theta =  0.$$
 \end{proof}

\subsection{Coisotropic reduction}
A cocontact manifold is also a Jacobi manifold defining $$\Lambda(\alpha, \beta) := -d\eta(\sharp(\alpha), \sharp(\beta)), \,\, E = -\mathcal{R}_z,$$ and thus, we have the $\Lambda$
-orthogonal and the corresponding definitions of isotropic, coiso\-tropic or Legendrian submanifolds and distributions.\\

Notice that $\mathcal{H}_t$ is an integrable distribution and that each leaf of its foliation inherits a contact structure. Indeed, $\mathcal{H}_t$ is the characteristic distribution defined by the Jacobi structure, $\mathcal{S}$. \\

Now we give a symplectic interpretation of the $\Lambda$-orthogonal. Notice that the restriction of $d\eta$ to $\mathcal{H}_{tz}$ defines a symplectic structure on the distribution. Denote by $\perp_{d\eta|}$ its symplectic orthogonal. The $\Lambda$-orthogonal is just the symplectic orthogonal of the intersection with $\mathcal{H}_{tz}$.\\

\begin{prop} \label{CocontactOrthogonal}Given a distribution $\Delta$ on a cocontact manifold $(M, \eta, \theta)$, $$\Delta^{\perp_\Lambda} = (\Delta \cap \mathcal{H}_{tz})^{\perp_{d\eta |}}.$$
\end{prop}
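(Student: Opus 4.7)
The strategy mirrors the analogous Propositions \ref{ContactOrthogonalGeneral} and \ref{CosymplecticOrthognal}: prove one inclusion directly, then close the gap with a dimension count.

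First I would pin down $\sharp_\Lambda$. From $\flat_{\theta,\eta}(\mathcal{R}_z) = \eta$ and $\flat_{\theta,\eta}(\mathcal{R}_t) = \theta$ one reads off $i_{\mathcal{R}_z} d\eta = 0 = i_{\mathcal{R}_t} d\eta$, and plugging into $\Lambda(\alpha,\beta) = -d\eta(\sharp_{\theta,\eta}(\alpha),\sharp_{\theta,\eta}(\beta))$ gives
\[ \ker \sharp_\Lambda = \langle \eta,\theta\rangle, \qquad \im \sharp_\Lambda = \mathcal{H}_{tz}. \]
In particular both sides of the desired equality automatically lie inside $\mathcal{H}_{tz}$.

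Next, for the inclusion $\Delta^{\perp_\Lambda} \subseteq (\Delta \cap \mathcal{H}_{tz})^{\perp_{d\eta|}}$, take $v = \sharp_\Lambda(\alpha)$ with $\alpha \in \Delta^0$ and $w \in \Delta \cap \mathcal{H}_{tz}$. The crucial observation is that on $\mathcal{H}_{tz}$ the map $\flat_{\theta,\eta}$ reduces to $w \mapsto i_w d\eta$, so that $w = \sharp_{\theta,\eta}(i_w d\eta)$. Substituting $\beta := i_w d\eta$ into the definition of $\Lambda$, and reading off $\alpha(w)$ from the identity $\flat_{\theta,\eta}(\sharp_{\theta,\eta}(\alpha)) = \alpha$ evaluated at $w$, a short chain of identities produces $d\eta(v,w) = \alpha(w) = 0$, exactly as in the contact and cosymplectic proofs.

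The equality will then follow from a dimension comparison. Since $(\mathcal{H}_{tz}, d\eta|_{\mathcal{H}_{tz}})$ is a $2n$-dimensional symplectic vector space,
\[ \dim(\Delta \cap \mathcal{H}_{tz})^{\perp_{d\eta|}} = 2n - \dim(\Delta \cap \mathcal{H}_{tz}). \]
On the other side I would use $\dim \Delta^{\perp_\Lambda} = \dim \Delta^0 - \dim(\Delta^0 \cap \ker \sharp_\Lambda)$ and compute $\dim(\Delta^0 \cap \langle \eta,\theta\rangle)$ as the corank of the restriction $\langle \eta,\theta\rangle \to \Delta^{*}$, which by duality equals the rank of the evaluation map $\Delta \to T_qM/\mathcal{H}_{tz} \cong \mathbb{R}^2$ with kernel $\Delta \cap \mathcal{H}_{tz}$. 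Plugging these numbers into the rank-nullity formula yields $\dim \Delta^{\perp_\Lambda} = 2n - \dim(\Delta \cap \mathcal{H}_{tz})$, matching the other side and upgrading the inclusion to equality.

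The main obstacle is the bookkeeping in the dimension step: one is tempted to split into the four cases according to whether $\mathcal{R}_t, \mathcal{R}_z$ lie in $\Delta$, as is done in Proposition \ref{ContactOrthogonalGeneral}. Routing the count through the quotient $\Delta \to T_qM/\mathcal{H}_{tz}$ handles all positions of $\Delta$ relative to $\mathcal{H}_t, \mathcal{H}_z$ uniformly and avoids the case split.
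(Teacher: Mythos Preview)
Your proposal is correct and follows essentially the same route as the paper: establish the inclusion $\Delta^{\perp_\Lambda} \subseteq (\Delta \cap \mathcal{H}_{tz})^{\perp_{d\eta|}}$ by the same chain of identities relating $\sharp_\Lambda$, $\flat_{\theta,\eta}$, and $d\eta$, then close it with the identical dimension count. The only cosmetic difference is that the paper computes $r_q := \dim(\Delta^0 \cap \langle \theta,\eta\rangle)$ via the annihilator identity $\Delta^0 \cap \langle \theta,\eta\rangle = (\Delta + \mathcal{H}_{tz})^0$ rather than via duality with the quotient map $\Delta \to T_qM/\mathcal{H}_{tz}$, but both routes already handle all positions of $\Delta$ uniformly without a case split.
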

\begin{proof} We check one inclusion and compare dimensions:\\
Let $\alpha \in \Delta^0_q$ and $u \in \Delta^0_q \cap (\mathcal{H}_{tz})_q.$ We will see that $d\eta_q(u,\sharp_\Lambda(\alpha)) = 0.$ Indeed, 
\begin{align*}
 d\eta_q(u, \sharp_\Lambda(\alpha)) &= d\eta_q(u, \sharp_\Lambda(\alpha)) + \theta_q(u) \theta_q(\sharp_\Lambda(\alpha)) + \eta_q(u) \eta_q(\sharp_\Lambda(\alpha)) \\
 &= \langle \flat(u), \sharp_\Lambda(\alpha) \rangle = \Lambda_q(\alpha, \flat(u)) = - d\eta_q(\sharp(\alpha), \sharp (\flat(u)))\\
 &= - d\eta_q(\sharp(\alpha), u) = -  d\eta_q(\sharp(\alpha),u) - \theta_q(\sharp(\alpha))\theta_q(u) - \eta_q(\sharp(\alpha))\eta_q(u)\\
 & = -\langle \flat(\sharp(\alpha)), u\rangle = - \alpha(u) = 0,
\end{align*}
that is, $\Delta^{\perp_\Lambda} \subseteq (\Delta \cap (\mathcal{H}_{tz})_q)^{\perp_{ d\eta|}}.$\\

Now we compare both dimensions. Let $k:= \dim \Delta, r_q := \dim(\Delta_q^0 \cap\langle \theta_q, \eta_q\rangle).$ Since $$\Delta_q^0 \cap \langle \theta_q, \eta_q\rangle = (\Delta_q \oplus (\mathcal{H}_{tz})_q)^0,$$ we have 
\begin{align*}
r_q &= \dim (\Delta_q^0 \cap \langle \theta_q, \eta_q\rangle) = \dim(\Delta_q \oplus (\mathcal{H}_{tz})_q)^0 = 2n +2 - \dim (\Delta_q \oplus (\mathcal{H}_{tz})_q)\\
&= 2n + 2 - (\dim \Delta_q + \dim (\mathcal{H}_{tz})_q - \dim(\Delta_q \cap (\mathcal{H}_{tz})_q) ) = 2n + 2  - k - 2n + \dim (\Delta_q \cap (\mathcal{H}_{tz})_q)\\
&= 2 + \dim (\Delta_q \cap (\mathcal{H}_{tz})_q) - k,
\end{align*}
which implies that $$\dim (\Delta_q \cap (\mathcal{H}_{tz})_q) = k + r_q - 2.$$
It only remains to observe that $$\dim \Delta^{\perp_\Lambda} = 2n + 2 - k - r_q = \dim \Delta^0 - \dim (\Delta^0 \cap \ker \sharp_\Lambda)$$ and that $$\dim(\Delta \cap (\mathcal{H}_{tz})_q)^{\perp_{d\eta |}} = 2n + 2 - k - r_q = 2n - \dim(\Delta \cap (\mathcal{H}_{tz})_q). $$
\end{proof}
Now we can give a characterization of Legendrian submanifolds:
\begin{lemma} \label{ClassificationLegendrianCocontact} $i:L \rightarrow M$ is a Legendrian submanifold ($(TL)^{\perp_\Lambda} = TL$) if and only if $\dim L = n$ and $$i^*\theta = 0, i^*\eta = 0.$$
\end{lemma}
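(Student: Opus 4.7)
The plan is to use Proposition~\ref{CocontactOrthogonal} to reduce the statement to a purely symplectic fact inside the distribution $\mathcal{H}_{tz}$, which at each point carries the symplectic structure induced by $d\eta|_{\mathcal{H}_{tz}}$. Note that $\dim \mathcal{H}_{tz} = 2n$ at every point, so Lagrangian subspaces there have dimension $n$.

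For the forward direction, suppose $(TL)^{\perp_\Lambda} = TL$. By Proposition~\ref{CocontactOrthogonal},
$$
TL = (TL)^{\perp_\Lambda} = (TL \cap \mathcal{H}_{tz})^{\perp_{d\eta|}},
$$
and the right-hand side lies inside $\mathcal{H}_{tz}$ by construction. Hence $TL \subseteq \mathcal{H}_{tz}$, which immediately gives $i^*\theta = 0$ and $i^*\eta = 0$. Consequently $TL \cap \mathcal{H}_{tz} = TL$, so the displayed equation becomes $TL = (TL)^{\perp_{d\eta|}}$; that is, $TL$ is Lagrangian in the symplectic vector space $(\mathcal{H}_{tz}, d\eta|)$, forcing $\dim L = n$.

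For the converse, assume $\dim L = n$ and $i^*\theta = i^*\eta = 0$. The vanishing of $i^*\eta$ gives $TL \subseteq \mathcal{H}_z$, and $i^*\theta = 0$ gives $TL \subseteq \mathcal{H}_t$, so $TL \subseteq \mathcal{H}_{tz}$. Differentiating $i^*\eta = 0$ yields $i^*d\eta = 0$, so $TL$ is an isotropic subspace of $(\mathcal{H}_{tz}, d\eta|)$ of dimension $n = \tfrac{1}{2}\dim \mathcal{H}_{tz}$, hence Lagrangian. Therefore $(TL)^{\perp_{d\eta|}} = TL$, and applying Proposition~\ref{CocontactOrthogonal} once more,
$$
(TL)^{\perp_\Lambda} = (TL \cap \mathcal{H}_{tz})^{\perp_{d\eta|}} = (TL)^{\perp_{d\eta|}} = TL,
$$
so $L$ is Legendrian.

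No step looks genuinely hard; the only subtlety is being careful that when $TL \subseteq \mathcal{H}_{tz}$ the $\Lambda$-orthogonal coincides with the $d\eta$-orthogonal taken inside the symplectic subbundle $(\mathcal{H}_{tz}, d\eta|)$, which is exactly what Proposition~\ref{CocontactOrthogonal} guarantees. Everything else is bookkeeping of dimensions and translation of the pullback conditions into containments of tangent spaces.
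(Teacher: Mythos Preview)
Your proof is correct and follows essentially the same route as the paper: both directions reduce to Proposition~\ref{CocontactOrthogonal}, using that the $\Lambda$-orthogonal equals the symplectic orthogonal inside $(\mathcal{H}_{tz}, d\eta|)$, so that Legendrian in $M$ is equivalent to being horizontal and Lagrangian in this symplectic subbundle. The paper's version is simply terser (``necessity is clear, since $L$ is necessarily horizontal''), whereas you spell out explicitly why horizontality follows from $(TL)^{\perp_\Lambda}\subseteq\mathcal{H}_{tz}$ and why the dimension is forced to be $n$.
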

\begin{proof}Necessity is clear, since $L$ is necessarily horizontal. Sufficiency follows from $i^* d\eta = 0$ which, together with $\dim L = n$,  implies that $T_qL$ is a Lagrangian submanifold of $(\mathcal{H}_{tz})_q$, for every $q\in L$. Using \textcolor{red}{Proposition \ref{CocontactOrthogonal}}, we have the equivalence.
\end{proof}

This allows us to express the $\Lambda$-orthogonal complement of a coisotropic distribution in a more convenient way:\\
\begin{corollary} Let $\Delta$ be a coisotropic distribution on $M$ ($\Delta^{\perp_\Lambda}\subseteq \Delta$), then $$\Delta^{\perp_\Lambda} = \ker d\eta_0|_{\Delta \cap \mathcal{H}_{tz}} \cap \ker \eta_0 \cap \ker \theta_0,$$ where $\eta_0$ and $\theta_0$ are the restrictions of $\eta$ and $\theta$ to $\Delta$, respectively.\\
\end{corollary}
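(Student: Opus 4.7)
The plan is to deduce the corollary directly from Proposition \ref{CocontactOrthogonal} together with an unpacking of the pullback forms $\eta_0 = i^*\eta$ and $\theta_0 = i^*\theta$, where $i$ denotes the inclusion of $\Delta$. The key observation is that $\ker \eta_0 = \Delta \cap \mathcal{H}_z$ and $\ker \theta_0 = \Delta \cap \mathcal{H}_t$, so
\[
\ker \eta_0 \cap \ker \theta_0 = \Delta \cap \mathcal{H}_{tz}.
\]
Thus the right-hand side of the claimed identity is precisely the subset of $\Delta \cap \mathcal{H}_{tz}$ on which $d\eta$ vanishes against every vector of $\Delta \cap \mathcal{H}_{tz}$, i.e.\ the symplectic orthogonal of $\Delta \cap \mathcal{H}_{tz}$ inside $(\mathcal{H}_{tz}, d\eta|_{\mathcal{H}_{tz}})$.

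First I would apply Proposition \ref{CocontactOrthogonal} to write $\Delta^{\perp_\Lambda} = (\Delta \cap \mathcal{H}_{tz})^{\perp_{d\eta|}}$; since this complement is computed inside $\mathcal{H}_{tz}$, it is automatically contained in $\mathcal{H}_{tz}$. The coisotropy hypothesis $\Delta^{\perp_\Lambda} \subseteq \Delta$ then forces $\Delta^{\perp_\Lambda} \subseteq \Delta \cap \mathcal{H}_{tz}$, so $\Delta \cap \mathcal{H}_{tz}$ is a coisotropic subspace of $(\mathcal{H}_{tz}, d\eta|_{\mathcal{H}_{tz}})$.

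For the two inclusions: if $v \in \Delta^{\perp_\Lambda}$, then $v \in \Delta \cap \mathcal{H}_{tz} = \ker \eta_0 \cap \ker \theta_0$ and, by Proposition \ref{CocontactOrthogonal}, $d\eta(v,w)=0$ for every $w \in \Delta \cap \mathcal{H}_{tz}$, placing $v$ in $\ker d\eta_0|_{\Delta \cap \mathcal{H}_{tz}}$. Conversely, any $v$ lying in the intersection on the right-hand side sits in $\Delta \cap \mathcal{H}_{tz}$ and annihilates $d\eta$ against every vector of $\Delta \cap \mathcal{H}_{tz}$, so $v \in (\Delta \cap \mathcal{H}_{tz})^{\perp_{d\eta|}} = \Delta^{\perp_\Lambda}$. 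The main, and essentially only, subtlety is fixing an interpretation of the notation $\ker d\eta_0|_{\Delta \cap \mathcal{H}_{tz}}$ consistent with the previous proposition; once that is done the proof is a direct application of Proposition \ref{CocontactOrthogonal} with no further calculation required.
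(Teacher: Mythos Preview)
Your proposal is correct and follows exactly the approach the paper intends: the corollary is stated without proof as an immediate consequence of Proposition \ref{CocontactOrthogonal}, and your argument spells out precisely that deduction, including the identification $\ker\eta_0\cap\ker\theta_0=\Delta\cap\mathcal{H}_{tz}$ and the use of coisotropy to place $\Delta^{\perp_\Lambda}$ inside $\Delta\cap\mathcal{H}_{tz}$. Your remark about fixing the interpretation of $\ker d\eta_0|_{\Delta\cap\mathcal{H}_{tz}}$ is apt and matches how the paper uses the corollary in the subsequent involutivity proof.
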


\begin{corollary}\label{CocontactOrthogonalCorollary} Let $i: N \hookrightarrow M$ be a coisotropic submanifold of $(M,\theta, \eta).$ Then the distribution $TN^{\perp_\Lambda}$ is involutive.
\end{corollary}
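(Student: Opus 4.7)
The plan is to imitate the contact case and lean heavily on the previous corollary, which describes $(TN)^{\perp_\Lambda}$ as the set of vectors $v \in TN$ satisfying $\eta_0(v)=0$, $\theta_0(v)=0$, and $d\eta_0(v,w)=0$ for all $w \in TN \cap \mathcal{H}_{tz}$; in particular $(TN)^{\perp_\Lambda} \subseteq TN \cap \mathcal{H}_{tz}$. So given vector fields $X,Y$ on $N$ with values in $(TN)^{\perp_\Lambda}$, the task reduces to checking the three conditions for $[X,Y]$.

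The first two conditions are immediate from Cartan's formula. Since $\theta$ is closed and $\theta_0(X)=\theta_0(Y)=0$, we get $\theta_0([X,Y])=0$. Similarly, $\eta([X,Y]) = -d\eta(X,Y)$, and $d\eta(X,Y)=0$ because $X \in (TN)^{\perp_\Lambda}$ while $Y \in TN \cap \mathcal{H}_{tz}$.

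For the last condition, I would take $Z$ a vector field on $N$ tangent to $TN \cap \mathcal{H}_{tz}$ and apply $dd\eta=0$:
\begin{align*}
0 = dd\eta(X,Y,Z) =\;& X(d\eta(Y,Z)) - Y(d\eta(X,Z)) + Z(d\eta(X,Y)) \\
& - d\eta([X,Y],Z) + d\eta([X,Z],Y) - d\eta([Y,Z],X).
\end{align*}
The first three terms vanish because $d\eta(X,\cdot)$ and $d\eta(Y,\cdot)$ are zero on $TN \cap \mathcal{H}_{tz}$. The main obstacle, and the step that requires a small auxiliary argument, is to show that the brackets $[X,Z]$ and $[Y,Z]$ again lie in $TN \cap \mathcal{H}_{tz}$, so that the last two terms also vanish. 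For this I would note: tangency to $N$ is clear; tangency to $\mathcal{H}_t = \ker\theta$ follows from the involutivity of $\mathcal{H}_t$ (as $\theta$ is closed); and tangency to $\mathcal{H}_z = \ker\eta$ follows from Cartan, since $\eta([X,Z]) = -d\eta(X,Z) = 0$ using once more that $X \in (TN)^{\perp_\Lambda}$ and $Z \in TN \cap \mathcal{H}_{tz}$ (and analogously for $[Y,Z]$). With all these pieces in place the identity collapses to $d\eta([X,Y],Z)=0$, which is what we needed.
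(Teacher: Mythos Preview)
Your proof is correct and follows essentially the same route as the paper: use the previous corollary's characterization of $(TN)^{\perp_\Lambda}$, then expand $d^2\eta_0(X,Y,Z)=0$, $d\eta_0(X,Y)=0$, and $d\theta_0(X,Y)=0$ to force $[X,Y]$ into the required kernels. Your auxiliary step verifying that $[X,Z],[Y,Z]\in TN\cap\mathcal{H}_{tz}$ is a detail the paper's terse proof leaves implicit.
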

\begin{proof} Denote $\eta_0 := i^*\eta$, $\theta_0 := i^*\theta$ and let $X,Y$ be vector fields on $N$ tangent to the distribution $TN^{\perp_\Lambda}$ and $Z$ be an arbitrary $tz$-horizontal vector field on $N$. Using \textcolor{red}{Corollary \ref{CocontactOrthogonal}}, we only need to check that $[X,Y] \in \ker d\eta_0|_{\Delta \cap \mathcal{H}_{tz}} \cap \ker \eta_0 \cap \ker \theta_0.$ Indeed, expanding the expressions $0 = \d^2 \eta_0(X,Y,Z)$, $0 =  d\eta_0(X,Y)$, $0 = d\theta_0(X,Y)$; we obtain 
\begin{align*}
0 &= - d\eta_0([X,Y], Z), \\
0 &= - \eta_0([X,Y]), \\
0 &= - \theta_0([X,Y]).
\end{align*}
\end{proof}

Now, given a coisotropic submanifold $N \hookrightarrow M$, since $TN^{\perp_\Lambda}$ is involutive, it provides a maximal foliation, $\mathcal{F}$. We assume that $N/\mathcal{F}$ inherits a manifold structure such that the canonical projection $\pi: N \rightarrow N/\mathcal{F}$ is a submersion.

Just like in the previous cases, for the well-definedness and non-degeneracy of the forms in the quotient, we need to restrict the coisotropic submanifolds we are studying. Consequently, we will say that a submanifold $N \hookrightarrow M$ is:
\begin{itemize}
    \item[$i)$] \textbf{$t$-vertical} (resp. \textbf{$z$-vertical}) if $\mathcal{V}_t\subseteq TN$ (resp. $\mathcal{V}_z\subseteq TN$).
    \item[$ii))$] \textbf{$tz$-vertical}, if it is both $t$-vertical and $z$-vertical.
    \item[$iii)$] \textbf{$t$-horizontal} (resp. \textbf{$z$-horizontal}) if $\mathcal{H}_t \subseteq TN$ (resp. $\mathcal{H}_z \subseteq TN$).
    \item[$iv)$] \textbf{$tz$-horizontal} if it is both $t$-horizontal and $z$-horizontal, that is, if $TN \subseteq \mathcal{H}_{tz}$.
\end{itemize}

\subsection{\texorpdfstring{$tz-$}-vertical reduction}
Let $i: N \hookrightarrow M$ be a $tz$-vertical submanifold. It is easy to check that under these conditions $$TN^{\perp_\Lambda} = \ker  d\eta_0 \cap \ker \eta_0 \cap \ker \theta_0,$$ and that $(TN)^{\perp_\Lambda}$ is a regular distribution of rank $$\operatorname{rank}\, (TN)^{\perp_\Lambda} = \dim M - \dim N.
$$
\begin{theorem}[$tz$-vertical coisotropic reduction] Let $i: N \hookrightarrow M$ be a $tz$-vertical submanifold of a cocontact manifold $(M, \theta, \eta).$ Denote by $\mathcal{F}$ the maximal foliation induced by the integrable distribution $TN^{\perp_\Lambda}$ on $N$. If $N/\mathcal{F}$ admits a manifold structure such that the canonical projection $\pi: N \rightarrow N/\mathcal{F}$ defines a submersion, then there exists unique forms $\theta_N, \eta_N$ on $N/\mathcal{F}$ such that $(N/\mathcal{F}, \theta_N, \eta_N)$ defines a cocontact structure and $$i^*\theta = \pi^*\theta_N, \,\, i^*\eta_N = \pi^*\theta_N.$$
\end{theorem}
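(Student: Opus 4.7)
The plan is to mimic, step for step, the proofs of Theorems \ref{VerticalCosymplecticReduction} and \ref{VerticalReductionContact}, treating the $\theta$- and $\eta$-factors in parallel. Uniqueness is immediate: the identities $i^*\theta = \pi^*\theta_N$ and $i^*\eta = \pi^*\eta_N$ force
\[
\theta_N([u]) := (i^*\theta)(u), \qquad \eta_N([u]) := (i^*\eta)(u),
\]
for $[u] = T_q\pi \cdot u$, so the whole content of the statement is that these formulas are consistent and produce a cocontact manifold.

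First I would check well-definedness on two fronts. Write $\theta_0 := i^*\theta$, $\eta_0 := i^*\eta$. By \textcolor{red}{Corollary \ref{CocontactOrthogonalCorollary}} (and the preceding corollary describing the orthogonal), at every $q \in N$ one has $(T_qN)^{\perp_\Lambda} \subseteq \ker \eta_0 \cap \ker \theta_0$, so the formulas do not depend on the choice of representative vector. Independence of the point within a leaf follows from Cartan's magic formula: for $X$ tangent to $(TN)^{\perp_\Lambda}$,
\[
\mathcal{L}_X \theta_0 = d\, i_X\theta_0 + i_X d\theta_0 = 0, \qquad \mathcal{L}_X \eta_0 = d\, i_X\eta_0 + i_X d\eta_0 = 0,
\]
using $i_X\theta_0 = i_X\eta_0 = 0$, $d\theta = 0$ and $i_X d\eta_0 = 0$. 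Smoothness of $\theta_N, \eta_N$ and closedness of $\theta_N$ then follow from $\pi$ being a submersion, since $\pi^* d\theta_N = i^* d\theta = 0$.

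The main obstacle, as in the previous reduction theorems, is verifying the non-degeneracy condition. Setting $k = \dim N$ and $\dim M = 2n+2$, the $tz$-vertical hypothesis gives $\dim(T_qN \cap \mathcal{H}_{tz}) = k-2$, and \textcolor{red}{Proposition \ref{CocontactOrthogonal}} then yields $\operatorname{rank}(TN)^{\perp_\Lambda} = 2n - k + 2 = \dim M - \dim N$; hence $\dim N/\mathcal{F} = 2(k-n-2)+2$, matching the cocontact parity, and the target inequality becomes $\theta_N \wedge \eta_N \wedge (d\eta_N)^{k-n-2} \neq 0$. Since $\pi$ is a submersion, this reduces to $\theta_0 \wedge \eta_0 \wedge (d\eta_0)^{k-n-2} \neq 0$ pointwise. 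I would prove the latter by observing that $(T_qN \cap \mathcal{H}_{tz})$ is a coisotropic subspace of the symplectic vector space $((\mathcal{H}_{tz})_q, d\eta|_{\mathcal{H}_{tz}})$, so linear symplectic reduction produces $2(k-n-2)$ vectors $u_1,\dots,u_{2(k-n-2)} \in T_qN \cap \mathcal{H}_{tz}$ on which $(d\eta_0)^{k-n-2}$ does not vanish. Completing the family with $\mathcal{R}_t(q)$ and $\mathcal{R}_z(q)$, which lie in $T_qN$ by the $tz$-vertical hypothesis, and using $\theta(\mathcal{R}_t) = \eta(\mathcal{R}_z) = 1$, $\theta(\mathcal{R}_z) = \eta(\mathcal{R}_t) = 0$, together with $i_{\mathcal{R}_t}d\eta = i_{\mathcal{R}_z}d\eta = 0$, yields
\[
\bigl(\theta_0 \wedge \eta_0 \wedge (d\eta_0)^{k-n-2}\bigr)\bigl(\mathcal{R}_t(q), \mathcal{R}_z(q), u_1, \dots, u_{2(k-n-2)}\bigr) \neq 0.
\]
This final counting is essentially the cosymplectic argument of \textcolor{red}{Theorem \ref{VerticalCosymplecticReduction}} with one extra Reeb factor inserted to accommodate the $\eta_0$ wedge.
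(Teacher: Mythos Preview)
Your proof is correct and follows essentially the same approach as the paper: uniqueness by the submersion property, well-definedness via the characterization $(TN)^{\perp_\Lambda} = \ker d\eta_0 \cap \ker \eta_0 \cap \ker \theta_0$ combined with Cartan's formula, and then non-degeneracy via the dimension count. The only difference is that the paper dispatches the non-degeneracy step in one line by invoking that same kernel description, whereas you spell out the explicit witness $(\mathcal{R}_t,\mathcal{R}_z,u_1,\dots,u_{2(k-n-2)})$ in the style of \textcolor{red}{Theorem~\ref{VerticalCosymplecticReduction}}; this is a matter of detail rather than of strategy.
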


Before proving the theorem, let us calculate the dimension of the quotient. Let $k + 2:= \dim N$. We have $$\operatorname{rank}\, TN^{\perp_\Lambda} = 2n + 2 - (k + 2) = 2n - k$$ and, therefore, $$\dim N/\mathcal{F} = 2(k - n) + 2.$$

\begin{proof} Uniqueness is clear, since $\pi$ is a submersion. We only need to check the well-definedness taking $$\theta_N([u]) := \theta_0(u), \,\, \eta_N([u]):= \eta_0(u).$$ Independence of the vector is clear, using \textcolor{red}{Proposition \ref{CocontactOrthogonal}}. For independence on the point, let $X$ be a vector field on $N$ tangent to the distribution. It is easy to check that $$\mathcal{L}_X \eta_0 = 0 , \mathcal{L}_X \theta_0 = 0;$$ and thus, well-definedness follows. For non-degeneracy, it is enough to proof that $$\theta_0 \wedge \eta_0 \wedge (d\eta_0)^{k-n} \neq 0.$$ This follows easily from $TN^{\perp_\Lambda} = \ker d\eta_0 \cap \ker \eta_0 \cap \ker \theta_0.$
\end{proof}

\subsubsection{Projection of Legendrian submanifolds}

\begin{prop} \label{ProjectionLegendrianTZvertical}Let $L \hookrightarrow M$ be a Legendrian submanifold and $i: L \hookrightarrow M$ be a $tz$-vertical coisotropic submanifold. If $L$ and $N$ have clean intersection and $L_N 
= \pi(L \cap N)$ is a submanifold in $N/\mathcal{F}$, $L_N$ is Legendrian.
\end{prop}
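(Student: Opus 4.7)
The plan is to reduce the statement to Lemma \ref{ClassificationLegendrianCocontact}: it suffices to verify (a) that $L_N$ is $tz$-horizontal (equivalently $i_{L_N}^*\theta_N = 0$ and $i_{L_N}^*\eta_N = 0$), and (b) that $\dim L_N = k - n$, where $k+2 = \dim N$, since then $\dim L_N$ equals half of $\dim(N/\mathcal{F}) - 2 = 2(k-n)$, matching the Legendrian dimension in the reduced cocontact manifold.

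Property (a) is essentially immediate. Since $L$ is Legendrian, the inclusion $i_L : L \hookrightarrow M$ pulls $\theta$ and $\eta$ back to zero on $L$, hence on $L \cap N$. Because $\theta_N$ and $\eta_N$ are characterized by $\pi^*\theta_N = i^*\theta$ and $\pi^*\eta_N = i^*\eta$, the forms $\theta_N$ and $\eta_N$ vanish on $T_{[q]}L_N = d_q\pi(T_qL \cap T_qN)$, giving horizontality.

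The core of the argument is the dimension count in (b). From the rank--nullity applied to $d_q\pi|_{T_qL \cap T_qN}$ and the fact that $\ker d_q\pi = T_qN^{\perp_\Lambda}$, we get
\begin{equation}\label{eqLNdim}
\dim L_N = \dim(L \cap N) - \dim(T_qL \cap T_qN^{\perp_\Lambda}).
\end{equation}
Since $L$ is Legendrian, $T_qL \subseteq \mathcal{H}_{tz}$, and by Proposition \ref{CocontactOrthogonal} (applied inside the symplectic vector space $((\mathcal{H}_{tz})_q, d\eta|)$), $T_qL$ is Lagrangian in $(\mathcal{H}_{tz})_q$ and $T_qN^{\perp_\Lambda} = (T_qN \cap \mathcal{H}_{tz})^{\perp_{d\eta|}}$. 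Taking symplectic orthogonals inside $(\mathcal{H}_{tz})_q$, the standard identity gives
\[
(T_qL \cap T_qN^{\perp_\Lambda})^{\perp_{d\eta|}} = T_qL^{\perp_{d\eta|}} + (T_qN^{\perp_\Lambda})^{\perp_{d\eta|}} = T_qL + (T_qN \cap \mathcal{H}_{tz}),
\]
so that
\[
\dim(T_qL \cap T_qN^{\perp_\Lambda}) = 2n - \dim\bigl(T_qL + (T_qN \cap \mathcal{H}_{tz})\bigr).
\]

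Finally, using that $N$ is $tz$-vertical one has $T_qN = (T_qN \cap \mathcal{H}_{tz}) \oplus \langle \mathcal{R}_t, \mathcal{R}_z\rangle$, hence $\dim(T_qN \cap \mathcal{H}_{tz}) = k$; and since $T_qL \subseteq \mathcal{H}_{tz}$ the clean intersection gives $T_qL \cap T_qN = T_qL \cap (T_qN \cap \mathcal{H}_{tz})$, so $\dim(T_qL + T_qN \cap \mathcal{H}_{tz}) = n + k - \dim(L \cap N)$. Substituting into \eqref{eqLNdim} yields $\dim L_N = k - n$, as desired. The main obstacle is bookkeeping: one must carefully invoke the $tz$-vertical hypothesis precisely at the step where it forces $\dim(T_qN \cap \mathcal{H}_{tz}) = \dim N - 2$, as otherwise the reduction to linear symplectic reduction inside $(\mathcal{H}_{tz})_q$ would leak dimensions from the Reeb directions.
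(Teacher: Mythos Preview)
Your proof is correct and follows essentially the same strategy as the paper: both reduce to Lemma \ref{ClassificationLegendrianCocontact}, establish horizontality from $i_L^*\eta = i_L^*\theta = 0$, and then carry out the same dimension count via rank--nullity for $d_q\pi$ together with the symplectic orthogonal identity and the Grassmann formula. The only cosmetic difference is that you unpack the $\Lambda$-orthogonal explicitly as the $d\eta|$-orthogonal inside $(\mathcal{H}_{tz})_q$ via Proposition \ref{CocontactOrthogonal}, whereas the paper keeps the $\perp_\Lambda$ notation and uses $(TN^{\perp_\Lambda})^{\perp_\Lambda} = TN \cap \mathcal{H}_{tz}$; these are the same computation.
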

\begin{proof} Using \textcolor{red}{Lemma \ref{ClassificationLegendrianCocontact}}, $L_N$ is clearly isotropic. Now we need to check that $\dim L_N = k - n$, given that $\dim N/\mathcal{F} = 2(k-n) + 2.$  We have 
\begin{equation}\label{eq0cocontact}
    \dim \pi(L \cap N) = \dim(L \cap N) - \operatorname{rank}\, (TL \cap (TN)^{\perp_\Lambda}).
\end{equation}
Furthermore, since $(TL \cap (TN)^{\perp_\Lambda}) = TL + (TN^{\perp_\Lambda})^{\perp_\Lambda}$ and $TL \cap (TN)^{\perp_\Lambda}$ is $tz$-horizontal, 

\begin{equation}\label{eq1cocontact}
    \operatorname{rank} \,(TL + (TN^{\perp_\Lambda})^{\perp_\Lambda}) = 2n - \operatorname{rank}\, (TL \cap TN^{\perp_\Lambda}).
\end{equation}
Now, using the Grassman formula:
\begin{align}
    \operatorname{rank} \,(TL + (TN^{\perp_\Lambda})^{\perp_\Lambda}) &= \dim L + \operatorname{rank} \, (TN^{\perp_\Lambda})^{\perp_\Lambda} - \operatorname{rank} \, (TL \cap (TN^{\perp_\Lambda})^{\perp_\Lambda})\\
    &= \dim L + (\dim N - 2) - \dim(L \cap N)\\& = n + k - \dim(L \cap N). \label{eq2cocontact}
\end{align}
From (\ref{eq1cocontact}) and (\ref{eq2cocontact}) we obtain 
\begin{equation}
    \operatorname{rank} \,(TL \cap TN^{\perp_\Lambda}) = n - k + \dim(L \cap N).
\end{equation}
Substituting in (\ref{eq0cocontact}) yields $\dim \pi(L \cap N) = k - n.$
\end{proof}
\subsection{\texorpdfstring{$t-$}-vertical, \texorpdfstring{$z-$}-horizontal reduction}
Suppose $i: N \hookrightarrow M$ is a $t$-vertical and $z$-horizontal coisotropic submanifold. This time we have $$(TN)^{\perp_\Lambda} = \ker \theta_0,$$ since $\eta_0 = 0$ implies $d\eta _0 = 0$. We conclude that $(TN)^{\perp_\Lambda} = TN \cap \mathcal{H}_{tz}$, which implies that $$\dim N/\mathcal{F} = 1.$$ This means that reduction is trivial, leaving the trivial cosymplectic submanifold of dimension $1$:\\
\begin{theorem} Let $i: N \hookrightarrow M$ be a $t$-vertical, $z$-horizontal coisotropic submanifold of a coconatct manifold $(M , \theta, \eta)$. Denote by $\mathcal{F}$ the maximal foliation defined by the distribution $(TN)^{\perp_\Lambda}$. If $N/\mathcal{F}$ has a manifold structure such that the canonical projection $\pi: N \rightarrow N/\mathcal{F}$ defines a submersion, then $N/\mathcal{F}$ is one-dimensional and there exists and unique volume form $\theta_N$ on $N/\mathcal{F}$ such that $$i^* \theta = \pi^* \theta_N.$$
\end{theorem}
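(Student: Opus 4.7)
The plan is to mimic the structure of the preceding reduction theorems in this section, exploiting the very strong simplification of $(TN)^{\perp_\Lambda}$ that the hypotheses force. The starting point, already noted in the preamble to the theorem, is that $z$-horizontality gives $\eta_0:=i^*\eta=0$, hence $d\eta_0=0$; combined with the general formula from Corollary~\ref{CocontactOrthogonalCorollary} (and the characterization of the orthogonal), this collapses $(TN)^{\perp_\Lambda}$ to $\ker\theta_0$, where $\theta_0:=i^*\theta$. Since $N$ is $t$-vertical, $\mathcal{R}_t\in TN$ and $\theta_0(\mathcal{R}_t)=1$, so $\theta_0$ is a nowhere-vanishing closed $1$-form on $N$. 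This makes $(TN)^{\perp_\Lambda}$ a regular distribution of corank $1$, from which the dimension count $\dim N/\mathcal{F}=1$ follows immediately.

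Uniqueness of $\theta_N$ is forced by the relation $\pi^*\theta_N=\theta_0$: since $\pi$ is a submersion, $\pi^*$ is injective on forms, so one is compelled to set $\theta_N([u]):=\theta_0(u)$ at each class $[q]=\pi(q)$ and each $[u]=T_q\pi\cdot u$. The two standard invariance checks then close the well-definedness: independence of the chosen representative $u$ of $[u]$ follows from $(T_qN)^{\perp_\Lambda}=\ker\theta_0|_q$, so any two lifts differ by an element of $\ker\theta_0$; independence of the chosen point $q$ in its leaf follows from $\mathcal{L}_X\theta_0=i_X d\theta_0+d(i_X\theta_0)=0$ for every vector field $X$ tangent to $(TN)^{\perp_\Lambda}$, since $d\theta_0=i^*d\theta=0$ and $i_X\theta_0=0$ by construction. (The usual argument that two points on the same leaf are joined by a finite concatenation of flows of such $X$ then finishes this step, exactly as in Theorem~\ref{VerticalCosymplecticReduction} and Theorem~\ref{VerticalReductionContact}.)

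The smoothness and closedness of $\theta_N$ are automatic: smoothness because $\pi$ is a surjective submersion and $\pi^*\theta_N=\theta_0$ is smooth, and closedness because $N/\mathcal{F}$ is one-dimensional so every $1$-form is closed. Finally, $\theta_N$ is a volume form on $N/\mathcal{F}$: it is a $1$-form on a $1$-manifold, and it is nowhere zero because $\theta_0$ is nowhere zero (as $\theta_0(\mathcal{R}_t)=1$) while $\pi$ is a submersion, so $\theta_N([u])=0$ would force $\theta_0(u)=0$ for every lift $u$ of $[u]$, contradicting the surjectivity of $T_q\pi$.

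The only step that requires any real thought is the identification $(TN)^{\perp_\Lambda}=\ker\theta_0$; once that is in place, the proof is entirely parallel to the $tz$-vertical case and in fact strictly easier, because no non-degeneracy of a contact or cocontact structure on the quotient has to be established — merely the non-vanishing of a $1$-form on a $1$-manifold, which is built in via the $t$-verticality hypothesis.
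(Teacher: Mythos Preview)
Your proposal is correct and follows exactly the route the paper takes: the paper's entire argument is the preamble observation that $z$-horizontality forces $\eta_0=0$, hence $(TN)^{\perp_\Lambda}=\ker\theta_0=TN\cap\mathcal{H}_{tz}$, which gives $\dim N/\mathcal{F}=1$; the theorem is then stated without further proof. Your write-up simply fills in the routine well-definedness and non-vanishing checks along the template of Theorems~\ref{VerticalCosymplecticReduction} and~\ref{VerticalReductionContact}, which is precisely what the paper intends the reader to supply.
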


\begin{remark}{\rm Given the triviality of reduction in the $t$-vertical and $z$-horizontal case, projection of Legendrian submanifolds in $M$ will always result in $0$-dimensional Lagrangian submanifolds in $N/\mathcal{F}$. }
\end{remark}

\subsection{\texorpdfstring{$z-$}-vertical, \texorpdfstring{$t-$}-horizontal reduction} 
Let $i: N \rightarrow M$ be a $z$-vertical and $t$-horizontal coisotropic submanifold. It is easy to check that this time we have the equality 
$$TN = \ker d\eta_0 \cap \ker \eta_0.$$
Since $\mathcal{H}_t$ is integrable, we have that coisotropic reduction of $N$ is actually happening in one of the leaves of the foliation that inhertits a contact structure from the cocontact structure. We conclude, from \textcolor{red}{Theorem \ref{VerticalReductionContact}}:\\

\begin{theorem} Let $i: N \rightarrow M$ be a $z$-vertical and $t$-horizontal coisotropic submanifold of a cocontact manifold $(M, \theta, \eta)$. Denote by $\mathcal{F}$ the maximal foliation on $N$ defined by the distribution $TN^{\perp_\Lambda}$. If $N/\mathcal{F}$ has a manifold structure such that the canonical projection $\pi: N \rightarrow N/\mathcal{F}$ defines a submersion, then there exists an unique form $\eta_N$ such that $(N/\mathcal{F}, \eta_N)$ is a contact manifold and $$i^*\eta = \pi^* \eta_N.$$
\end{theorem}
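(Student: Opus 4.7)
The plan is to reduce this to vertical coisotropic reduction in the contact setting (\textcolor{red}{Theorem \ref{VerticalReductionContact}}) by observing that, under the hypotheses, $N$ sits inside a single leaf of a natural foliation of $M$ by contact manifolds. First, I would show that $\mathcal{H}_t = \ker \theta$ is involutive: since $\theta$ is closed, for $X,Y$ tangent to $\mathcal{H}_t$ one has $0 = d\theta(X,Y) = -\theta([X,Y])$. Each leaf $L$ of the resulting foliation inherits a contact form $\eta_L := \eta|_L$, because $\theta \wedge \eta \wedge (d\eta)^n$ is a volume form on $M$ and $TL = \ker \theta$, so evaluating this form on any frame whose first vector is $\mathcal{R}_t$ shows that $\eta_L \wedge (d\eta_L)^n$ is non-degenerate on $L$. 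The $t$-horizontality $TN \subseteq \mathcal{H}_t$ then places (each connected component of) $N$ inside one such leaf $L$ via an inclusion $j: N \hookrightarrow L$, and $\mathcal{R}_z$ is tangent to $L$ (because $\theta(\mathcal{R}_z)=0$) and satisfies $\eta_L(\mathcal{R}_z)=1$, $i_{\mathcal{R}_z} d\eta_L = 0$, which identifies it as the Reeb vector field of $(L, \eta_L)$. The $z$-vertical hypothesis $\mathcal{R}_z \in TN$ therefore asserts exactly that $N$ is a Reeb-vertical submanifold of the contact manifold $(L, \eta_L)$.

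Next I would transfer the coisotropy hypothesis and the associated orthogonal distribution from the cocontact structure on $M$ to the contact structure on $L$. Set $\eta_0 := i^*\eta$ and note that $i^*\theta = 0$ by $t$-horizontality. The corollary immediately preceding \textcolor{red}{Corollary \ref{CocontactOrthogonalCorollary}} then gives $TN^{\perp_\Lambda} = \ker d\eta_0|_{TN \cap \mathcal{H}_z} \cap \ker \eta_0$. Since $\mathcal{R}_z \in TN$ and $i_{\mathcal{R}_z} d\eta = 0$, every $v \in TN$ decomposes as $v_h + c\,\mathcal{R}_z$ with $v_h \in TN \cap \mathcal{H}_z$, so this expression simplifies to $\ker d\eta_0 \cap \ker \eta_0$, which is precisely the formula supplied by \textcolor{red}{Proposition \ref{ContactOrthogonalCoisotropic}} for the contact $\Lambda_L$-orthogonal of $TN$ in $L$. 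Hence $N$ is coisotropic in $(L, \eta_L)$ as well, and the foliation $\mathcal{F}$ on $N$ is the same object whether constructed in $M$ or in $L$.

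Finally, applying \textcolor{red}{Theorem \ref{VerticalReductionContact}} to the pair $N \hookrightarrow L$ yields a unique $1$-form $\eta_N$ on $N/\mathcal{F}$ such that $(N/\mathcal{F}, \eta_N)$ is a contact manifold and $\pi^*\eta_N = j^*\eta_L = i^*\eta$, which is the compatibility demanded by the statement; both uniqueness and the contact structure are inherited from the contact reduction theorem. The main obstacle will be verifying the agreement between the cocontact $\Lambda$-orthogonal and the contact $\Lambda_L$-orthogonal of $TN$, which rests on the vanishing $\theta_0 = 0$ and the Reeb identity $i_{\mathcal{R}_z} d\eta = 0$; once this identification is secured, the theorem is essentially a direct corollary of the contact case.
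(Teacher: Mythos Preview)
Your proposal is correct and follows essentially the same approach as the paper: both observe that $\mathcal{H}_t$ is integrable with contact leaves, that a $t$-horizontal $N$ lies in a single such leaf, and that the cocontact coisotropic reduction of $N$ then coincides with the contact vertical coisotropic reduction of \textcolor{red}{Theorem \ref{VerticalReductionContact}}. You have supplied considerably more detail than the paper (which states the reduction to the contact case in a single sentence), in particular the explicit verification that $TN^{\perp_\Lambda} = \ker d\eta_0 \cap \ker \eta_0$ agrees with the contact $\Lambda_L$-orthogonal.
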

\subsubsection{Projection of Legendrian submanifolds}
\begin{prop}Let $L \hookrightarrow M$ be a Legendrian submanifold. If $L$ and $N$ have clean intersection and $L_N = \pi(L \cap N)$ is a submanifold in $N/\mathcal{F}$, $L_N$ is Legendrian in $(N/\mathcal{F}, \theta_N)$.
\end{prop}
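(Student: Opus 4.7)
The plan is to reduce this statement to the vertical coisotropic reduction in the contact setting (\textcolor{red}{Proposition \ref{ProjectionLegendrianVerticalContact}}) by restricting the ambient cocontact manifold to a single leaf of the integrable distribution $\mathcal{H}_t = \ker \theta$. Since $\theta$ is closed, $\mathcal{H}_t$ is integrable of rank $2n+1$; each leaf $\Sigma$ inherits a contact structure given by $\eta|_\Sigma$ (the volume condition $\theta \wedge \eta \wedge (d\eta)^n \neq 0$ forces $\eta|_\Sigma \wedge (d\eta|_\Sigma)^n \neq 0$), whose Reeb vector field is precisely $\mathcal{R}_z$, which is automatically tangent to every leaf.

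The next step is to transfer the data to such a leaf. Fix $q_0 \in L \cap N$ and let $\Sigma$ be the leaf of $\mathcal{H}_t$ through $q_0$. Because $N$ is $t$-horizontal, $TN \subseteq \mathcal{H}_t$, so $N$ sits locally inside $\Sigma$; it is vertical in $\Sigma$ (as $\mathcal{R}_z \in TN$) and remains coisotropic in the contact sense, because the corollary following \textcolor{red}{Proposition \ref{CocontactOrthogonal}}, combined with $\theta_0 = 0$, shows that $(TN)^{\perp_\Lambda}$ computed in $M$ equals $\ker d\eta_0 \cap \ker \eta_0$, which is precisely the contact $\Lambda$-orthogonal of $N$ in $\Sigma$ given by \textcolor{red}{Proposition \ref{ContactOrthogonalCoisotropic}}. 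By \textcolor{red}{Lemma \ref{ClassificationLegendrianCocontact}}, $L$ satisfies $i_L^*\theta = 0$, $i_L^*\eta = 0$ and $\dim L = n$, so $L$ is also locally contained in $\Sigma$ and, having dimension $n = \tfrac{1}{2}(\dim \Sigma - 1)$, is Legendrian in $(\Sigma, \eta|_\Sigma)$.

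With these identifications the foliation $\mathcal{F}$ on $N$ produced by the cocontact reduction coincides with the foliation produced by vertical contact reduction inside $\Sigma$, and the reduced contact form $\eta_N$ from the preceding theorem is exactly the form supplied by \textcolor{red}{Theorem \ref{VerticalReductionContact}} applied to $N \subseteq \Sigma$. The clean intersection hypothesis and the submanifold assumption on $L_N$ transfer verbatim to this setting, and the conclusion then follows by invoking \textcolor{red}{Proposition \ref{ProjectionLegendrianVerticalContact}} on the contact triple $(\Sigma, N, L)$. The main obstacle will be the careful bookkeeping required to identify $(TN)^{\perp_\Lambda}$ computed in $M$ with the contact $\Lambda$-orthogonal of $N$ computed inside $\Sigma$; this relies on the $t$-horizontality of $N$ (so that $\theta_0 \equiv 0$) and on $\mathcal{R}_z \in TN$, which makes the apparent restriction to $TN \cap \mathcal{H}_{tz}$ in the corollary after \textcolor{red}{Proposition \ref{CocontactOrthogonal}} vacuous, since $i_{\mathcal{R}_z} d\eta_0 = 0$. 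Once this agreement is in place, everything else reduces to the contact case already treated.
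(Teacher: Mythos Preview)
Your argument is correct and takes a genuinely different route from the paper. The paper proceeds directly: it observes that $L_N$ is horizontal (because $L$ is), invokes \textcolor{red}{Lemma \ref{isotropicClassificationContact}} to conclude isotropy, and then repeats the dimension count from \textcolor{red}{Proposition \ref{ProjectionLegendrianTZvertical}} to show $\dim L_N = k - n$. You instead exploit the observation, already used by the paper in the preceding reduction theorem, that everything lives in a single leaf $\Sigma$ of the integrable distribution $\mathcal{H}_t$, identify the cocontact $\Lambda$-orthogonal of $N$ with the contact $\Lambda$-orthogonal inside $\Sigma$, and then simply quote \textcolor{red}{Proposition \ref{ProjectionLegendrianVerticalContact}}.

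Your approach is more conceptual and avoids redoing the dimension bookkeeping; it also makes explicit why the $z$-vertical, $t$-horizontal case is literally the contact case in disguise, whereas the paper only invokes that philosophy for the reduction theorem and then abandons it for the Legendrian projection. The paper's direct approach, on the other hand, is self-contained and does not require the reader to track the identification of the two Jacobi structures on $N$. One small point worth making more explicit in your write-up: the passage from $\ker d\eta_0|_{TN \cap \mathcal{H}_{tz}}$ to $\ker d\eta_0$ uses not only $i_{\mathcal{R}_z} d\eta_0 = 0$ but also the decomposition $TN = (TN \cap \mathcal{H}_{tz}) \oplus \langle \mathcal{R}_z \rangle$, which holds precisely because $N$ is simultaneously $t$-horizontal and $z$-vertical; this is implicit in what you wrote but deserves a sentence.
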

\begin{proof} It is clearly horizontal and, therefore, using \textcolor{red}{Lemma \ref{isotropicClassificationContact}}, it is isotropic. Now, supposing $k = \dim N$, we only need to check that $$\dim L_N = k - n,$$ since $\dim N/\mathcal{F} = 2(k - n) + 1.$ This is straight-forward, following the same steps given in  \textcolor{red}{Proposition \ref{ProjectionLegendrianTZvertical}}.
\end{proof}

\subsection{\texorpdfstring{$tz-$}-horizontal reduction}
Let $i: N \hookrightarrow M$ be a $tz$-horizontal coisotropic submanifold. Since $\eta_0 = 0$, $d\eta_0 = 0$ and $\theta_0 = 0$, we have $$(TN)^{\perp_\Lambda} = TN,$$ wich implies that $$\dim N/\mathcal{F} = 0,$$ leaving a trivial symplectic manifold, having as many points as path components of $N$. This means that if $N/\mathcal{F}$ admits a manifold structure, it will be a symplectic manifold. \\

\begin{remark}{\rm Proceeding as in the previous cases we immediately obtain that the projection of Legendrian submanifolds is trivial.}
\end{remark}





\section{ Coisotropic reduction in stable Hamiltonian structures}\label{SHS}
\setsectiontitle{COISOTROPIC REDUCTION IN STABLE HAMILTONIAN STRUCTURES}
There were several attempts to combine cosymplectic and contact structures. The first one is due to Albert \cites{albert1989theoreme}, using a combination of a 1-form and a 2-form; however, the setting is not useful for us since the lack of integrability. The second attempt in this direction is due to Acakpo \cites{acakpo2022stable}, which is studied in this section.\\

\begin{Def}[Stable Hamiltonian structure] A \textbf{stable Hamiltonian structure} (SHS) is a triple $(M, \omega, \lambda)$ where $M$ is a $2n + 1$ dimensional manifold, $\omega$ is a closed $2$-form and $\lambda$ is a $1$-form such that $$\lambda \wedge \omega^n \neq 0, \,\,\, \ker \omega \subseteq \ker d\lambda.$$
\end{Def}

There exists, just like in the previous cases, a natural isomorphism $$\flat_{\lambda, \omega}: TM \rightarrow T^*M;\,\,\,v_q \mapsto i_{vq} \omega + \lambda(v_q) \cdot \lambda.$$ and its inverse $\sharp_{\lambda, \omega} := \flat_{\lambda, \omega} ^{-1}.$ Let us perform some calculations in coordinates. Since $d\omega$ is closed and of constant range $2n$, around any point, there exists coordinates $(q^i, p_i, z)$ such that $$\omega = dq^i \wedge dp_i$$ (see \cites{godbillon1969geometrie}). In this coordinate chart $\lambda$ will have an expression of the form $$\lambda = a_i dq^i + b^i dp_i + c dz.$$ Since $$0 \neq \lambda \wedge \omega^n = c dz \wedge \omega^n,$$ we conclude that $c \neq 0$. Let $\varphi_t(q^i, p_i, z)$ be the (local) flow of the vector field $\displaystyle{\frac{1}{c} \partder{}{z}}$. Fix some value $z_0$, and define the map $$\psi(q^i, p_i, t) := (q^i, p_i, z(\varphi_t(q^i, p_i, z_0))).$$ It is clear that this defines a local diffeomorphism. Take the new set of coordinates to be $$(q^i, p_i, t) := \psi^{-1} \circ (q^i, p_i, z).$$ We have $$dz = \partder{z}{q^i} dq^i + \partder{z}{p_i} dp_i + \partder{z}{t} dt = \partder{z}{q^i} dq^i + \partder{z}{p_i} dp_i + \frac{1}{c} dt.$$ Therefore, in the new coordinate chart, $$\lambda = \left( a_i + \partder{z}{q^i}\right) dq^i + \left( b_i + \partder{z}{p_i}\right) dp_i + dt.$$ We conclude:

\begin{prop} Around every point of $M$ there exists a coordinate chart $(q^i, p_i, z)$ such that $$\omega = dq^i \wedge dp_i, \lambda = a_i dq^i + b^i dp_i + dz.$$ We call these coordinates Darboux coordinates.
\end{prop}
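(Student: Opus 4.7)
The plan is to split the argument into two stages: first obtain coordinates that put $\omega$ in Darboux form, then perform a fiber-wise change of the transverse coordinate to normalize $\lambda$ without disturbing $\omega$.

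For the first stage, I would appeal to the classical normal-form theorem for closed $2$-forms of constant rank (as stated, for instance, in Godbillon). The condition $\lambda \wedge \omega^n \neq 0$ on a $(2n+1)$-manifold forces $\omega$ to have constant rank $2n$ and its kernel $\ker \omega$ to be a rank-one distribution, so this theorem produces coordinates $(q^i, p_i, z)$ on a neighborhood of any point in which $\omega = dq^i \wedge dp_i$ and $\ker \omega$ is spanned by $\partial/\partial z$. In these coordinates, expand $\lambda = a_i\, dq^i + b^i\, dp_i + c\, dz$ for some smooth functions $a_i, b^i, c$. The nondegeneracy assumption then reads $\lambda \wedge \omega^n = c\, dz \wedge \omega^n \neq 0$, which forces $c$ to be nowhere zero on the chart.

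For the second stage, I would straighten the $dz$-component of $\lambda$ by introducing a new transverse coordinate built from the flow of the vector field $\tfrac{1}{c}\partder{}{z}$. Concretely, I would fix a base value $z_0$, let $\varphi_t$ denote the local flow of $\tfrac{1}{c}\partder{}{z}$, and define a local change of coordinates $\psi(q^i, p_i, t) := (q^i, p_i, z(\varphi_t(q^i, p_i, z_0)))$. The key observation is that $\psi$ fixes the $(q^i, p_i)$ factors and is a local diffeomorphism because $\tfrac{1}{c}\partder{}{z}$ is transverse to the level sets $\{t = \text{const}\}$. Consequently, $\omega$ retains the expression $dq^i \wedge dp_i$ in the new chart $(q^i, p_i, t)$. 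A chain-rule computation yields
$$dz = \partder{z}{q^i}\, dq^i + \partder{z}{p_i}\, dp_i + \tfrac{1}{c}\, dt,$$
and substituting back into the expression for $\lambda$ absorbs the $1/c$ factor to produce a $dt$ term with coefficient $1$, giving the desired normal form.

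The main obstacle I expect is the bookkeeping needed to confirm that $\psi$ is a genuine local diffeomorphism and that substituting for $dz$ does not introduce unwanted cross-terms. Both issues hinge on the fact that $c \neq 0$, which is exactly where the stability hypothesis $\lambda \wedge \omega^n \neq 0$ enters. Once this is verified, no further use is made of the condition $\ker \omega \subseteq \ker d\lambda$, which is natural since that condition only constrains the behavior of $\lambda$ along the Reeb direction, not the shape of its coefficients in any particular chart.
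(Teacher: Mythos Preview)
Your proposal is correct and follows essentially the same approach as the paper: first invoke the Darboux-type normal form for closed $2$-forms of constant rank to put $\omega$ in standard form, then use the flow of $\tfrac{1}{c}\partial/\partial z$ to reparametrize the transverse coordinate so that the coefficient of $dz$ in $\lambda$ becomes $1$. Your observation that the condition $\ker\omega \subseteq \ker d\lambda$ plays no role in this normal-form statement is also in agreement with the paper, which only uses that condition afterwards.
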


In Darboux coordinates, the condition $\ker  d\omega \subseteq d\lambda$ translates to $$\partder{a_ i}{z} = \partder{b^i}{z} = 0.$$ Also, the musical isomorphisms take the expression:
\begin{align*}
    \flat_{\lambda, \omega}\left (\partder{}{q^i}\right ) &= dp_i + a_i a_j dq^j + a_i b^j dp_j + a_i dz,\\
    \flat_{\lambda, \omega} \left (\partder{}{p_i}\right ) &= -dq^i + b^i a_j dq^j + b^ib^j dp_j + b^i dz,\\
    \flat_{\lambda, \omega}\left (\partder{}{z} \right) &= a_ i dq^i + b^i dp_i + dz,\\
    \sharp_{\lambda, \omega} \left( dq^i\right) &= - \partder{}{p_i} + {b^i} \partder{}{z},\\
    \sharp_{\lambda, \omega} \left( dp_i\right) &= \partder{}{q^i} - {a_ i} \partder{}{z},\\
    \sharp_{\lambda, \omega} \left(dz \right) &= \partder{}{z} + {a_i} \partder{}{p_i} - {b^i} \partder{}{q^i}.\\
\end{align*}
Imitating the definitions in the contact case, we can define a bivector field on $M$ as $$\Lambda_q(\alpha_q, \beta_q) := \omega_q(\sharp_{\lambda, \omega}(\alpha_q), \sharp_{\lambda, \omega}(\beta_q)),$$ and the morphism $$\sharp_\Lambda: T^*M \rightarrow TM; \,\,\, \alpha_q \mapsto i_{\alpha_q} \Lambda$$ with the induced $\Lambda$-orthogonal complement for distributions $$\Delta_q^{\perp_\Lambda} := \sharp_\Lambda(\Delta_q^0).$$ In coordinates $(q^i, p_i, z)$ the bivector field $\Lambda$ takes the local form: $$\Lambda = \partder{}{q^i} \wedge \partder{}{p_ i} + \left( {a_ i} \partder{}{p_ i} - {b^i} \partder{}{q^i} \right) \wedge  \partder{}{z}.$$
We also have the distributions 
\begin{itemize}
\item[i)] $\mathcal{H}_q:= \ker \lambda_q,$
\item[ii)] $ \mathcal{V}_q:= \ker \omega_q,$
\end{itemize}
and the Reeb vector field $\mathcal{R}_q := \sharp_{\lambda, \omega}(\lambda_q).$ Locally, 
\begin{align}
    \mathcal{H} &= \langle \partder{}{q^i} - {a_i} \partder{}{z}, \partder{}{p_i} - {b^i} \partder{}{z}\rangle,\\
    \mathcal{V} &= \langle  \partder{}{z}\rangle, \\
    \mathcal{R} &=  \partder{}{z}.
\end{align}

A natural question to ask is wether the bivector field $\Lambda$ arises from a Jacobi bracket. We have 
$$[\Lambda, \Lambda] = 2 \partder{}{p_i} \wedge \left ( \partder{a_j}{q^i} \partder{}{p_j} - \partder{b^j}{q^i} \partder{}{q^j}  \right) \wedge \partder{}{z} - 2 \partder{}{q^i} \wedge \left( \partder{a_j}{p_i} \partder{}{p_j} - \partder{b^j}{p_i}\partder{}{q^j} \right) \wedge \partder{}{z}.$$ Taking an arbitrary vector field $$E = X^i \partder{}{q^i} + Y_i \partder{}{p_i} + Z \partder{}{z},$$ $(\Lambda, E)$ defines a Jacobi structure if and only if $$[\Lambda, \Lambda] = 2 E \wedge \Lambda, \,\,\,[E, \Lambda] = 0.$$ It is easily checked that the first equality holds when
\begin{align}
&\partder{a_j}{q^i} - \partder{a_ i}{q^j} = \partder{b^j}{p_i} - \partder{b^i}{p_j} = 0,\\
&  \partder{b^i}{q^j} -\partder{a_j}{p_i} = 0, \,\, i \neq j,\\
&  \partder{b^i}{q_i} - \partder{a_i}{p_i}= f,\\
&X^i = Y_i = 0,\\
&Z = f,
\end{align}
for certain local unique funtion $f$. It is easy to check that this relations translate intrinsically to $$d\lambda = f \omega, \,\, E =  f \mathcal{R}.$$
Now, let us compute $[E, \Lambda]$ for $E = f \displaystyle{\partder{}{z}}.$
$$[E, \Lambda] = \left ( \partder{f}{q^i} - a_i \partder{f}{z}\right) \partder{}{p_i} \wedge \partder{}{z} + \left (- \partder{f}{p_i} + b^i \partder{f}{z} \right ) \partder{}{q^i} \wedge \partder{}{z}.$$
Therefore, $[E, \Lambda] = 0$ if and only if 
\begin{align}
    \partder{f}{q^i} - a_i \partder{f}{z} = -\partder{f}{p_i} + b^i \partder{f}{z} = 0.
\end{align}
This is easily seen to be equivalent to $$\sharp_{\lambda, \omega}(df) \in \mathcal{V}.$$
We have concluded the following:\\
\begin{prop} The bivector field $\Lambda$ arises from a Jacobi structure if and only if there exists some $f \in \mathcal{C}^\infty(M)$ such that $$d\lambda = f \omega, \,\,\ \sharp_{\lambda, \omega}(df) \in \mathcal{V}.$$ And, in that case, the Jacobi structure is defined by the pair $(\Lambda, f \mathcal{R})$.\\
\end{prop}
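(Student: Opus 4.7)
The plan is to exploit the local Darboux coordinates constructed just above the statement and translate the defining identities of a Jacobi structure into coordinate-free conditions on $\lambda$ and $\omega$. Recall from Section \ref{Jacobi} that a pair $(\Lambda, E)$ defines a Jacobi structure if and only if $[\Lambda,\Lambda] = 2E\wedge\Lambda$ and $[E,\Lambda] = 0$. Thus I must characterise when such an $E$ exists for the given bivector $\Lambda$ and, when it does, identify it explicitly.

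First, I would work in the Darboux chart $(q^i,p_i,z)$ in which $\omega = dq^i\wedge dp_i$ and $\lambda = a_i\,dq^i + b^i\,dp_i + dz$, and set $E = X^i\partial_{q^i} + Y_i\partial_{p_i} + Z\partial_z$. Expanding $[\Lambda,\Lambda] = 2E\wedge\Lambda$ with the local expression of $\Lambda$ displayed in the excerpt forces $X^i = Y_i = 0$ and $Z = f$ for a single local function $f$ satisfying the curl-type relations already listed:
\[
\partial_{q^i}a_j - \partial_{q^j}a_i = 0,\ \ \partial_{p_i}b^j - \partial_{p_j}b^i = 0,\ \ \partial_{q^j}b^i - \partial_{p_i}a_j = 0\ (i\neq j),\ \ \partial_{q^i}b^i - \partial_{p_i}a_i = f.
\]
I would then observe that these equations say exactly that every component of $d\lambda - f\omega$ in the basis $\{dq^i\wedge dq^j,\, dp_i\wedge dp^j,\, dq^i\wedge dp_j\}$ vanishes; meanwhile the stability hypothesis $\ker\omega\subseteq\ker d\lambda$, combined with the coordinate form of $\lambda$, forces $\partial_z a_i = \partial_z b^i = 0$, ruling out any $dz$-containing piece of $d\lambda$. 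Hence the local system is equivalent to the intrinsic identity $d\lambda = f\omega$, and $E = f\partial_z = f\mathcal{R}$.

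For the second identity, I would substitute $E = f\mathcal{R}$ into $[E,\Lambda]$ to obtain the equations $\partial_{q^i}f - a_i\partial_z f = 0$ and $\partial_{p_i}f - b^i\partial_z f = 0$ already computed in the excerpt. Comparing these with the coordinate formulae for $\sharp_{\lambda,\omega}(dq^i)$ and $\sharp_{\lambda,\omega}(dp_i)$, they assert precisely that the $\partial_{q^i}$-- and $\partial_{p_i}$--components of $\sharp_{\lambda,\omega}(df)$ vanish, i.e.\ $\sharp_{\lambda,\omega}(df)\in\langle\partial_z\rangle = \mathcal{V}$. The implications reverse verbatim, giving the ``if'' direction.

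The main obstacle is the passage from the PDE system to the intrinsic identity $d\lambda = f\omega$; the subtle point is ensuring that no extraneous $dz$--component appears in $d\lambda$, which is exactly where the stability assumption is used. Uniqueness of $f$ follows from the non-degeneracy condition $\lambda\wedge\omega^n \neq 0$: if $f_1\omega = f_2\omega$, then $(f_1-f_2)\lambda\wedge\omega^n = 0$, hence $f_1 = f_2$, so the pair $(\Lambda, f\mathcal{R})$ is well defined.
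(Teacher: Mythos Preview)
Your proof is correct and follows essentially the same approach as the paper: both arguments work in the Darboux chart, compute $[\Lambda,\Lambda]$ and $[E,\Lambda]$ explicitly, and then translate the resulting coordinate equations into the intrinsic identities $d\lambda = f\omega$ and $\sharp_{\lambda,\omega}(df)\in\mathcal{V}$. Your write-up adds a bit more explanation than the paper gives---in particular the explicit use of the stability hypothesis $\ker\omega\subseteq\ker d\lambda$ to kill the $dz$-components of $d\lambda$, and the uniqueness argument for $f$---but the strategy is identical.
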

\begin{remark} {\rm Notice that we recover the cosymplectic scenario when $f = 0$ and the contact scenario when $f = 1$ (because the definition of $\Lambda$ in contact geometry is the opposite of the definition we gave in SHS).}
\end{remark}

Let us return to the study of coisotropic reduction. It is easy to see that $\omega$ induces a symplectic form in $\mathcal{H}$, $\omega|_\mathcal{H}$. This induces the symplectic orthogonal for $\Delta_q \subset \mathcal{H}_q$: $$\Delta_q^{\perp_{\omega|_\mathcal{H}}} .= \{v \in \mathcal{H}_q\,| \, \omega(v,w) = 0 \,\forall w \in \Delta_q\}.$$

A distribution $\Delta$ in $M$ will be called:
\begin{itemize}
\item[i)] \textbf{Isotropic} if $\Delta \subseteq \Delta^{\perp_\Lambda};$
\item[ii)] \textbf{Coisotropic} if $\Delta^{\perp_\Lambda} \subseteq \Delta;$
\item[iii)] \textbf{Lagrangian} if $\Delta^{\perp_\Lambda} = \Delta^{\perp_\Lambda} \cap \mathcal{H}.$
\end{itemize}

We have the following equality:\\

\begin{prop} Let $\Delta$ be a distribution on $M$. Then $$\Delta^{\perp_\Lambda} = (\Delta \cap \mathcal{H})^{\perp_{\omega|_\mathcal{H}}}.$$
\end{prop}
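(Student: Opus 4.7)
The plan is to mimic the argument used in Proposition \ref{CosymplecticOrthognal}: establish the inclusion $\Delta^{\perp_\Lambda} \subseteq (\Delta \cap \mathcal{H})^{\perp_{\omega|_\mathcal{H}}}$ by a direct computation, and close the argument with a dimension count. The preliminary facts I would pin down first are $\im \sharp_\Lambda \subseteq \mathcal{H}$ and $\ker \sharp_\Lambda = \langle \lambda \rangle$, together with the Reeb identities $\lambda(\mathcal{R}) = 1$ and $i_\mathcal{R}\omega = 0$ (so that $\mathcal{V} = \langle \mathcal{R}\rangle$). All of these follow from the definition $\mathcal{R} = \sharp_{\lambda,\omega}(\lambda)$ together with the non-degeneracy $\lambda \wedge \omega^n \neq 0$: the latter forces $\lambda$ not to annihilate the one-dimensional kernel of $\omega$, and then $\flat_{\lambda,\omega}(\mathcal{R}) = \lambda$ evaluated on $\mathcal{R}$ yields $\lambda(\mathcal{R})^2 = \lambda(\mathcal{R})$ with the zero alternative ruled out.

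For the non-trivial inclusion, take $v = \sharp_\Lambda(\alpha)$ with $\alpha \in \Delta^0$; by $\im \sharp_\Lambda \subseteq \mathcal{H}$ we have $v \in \mathcal{H}$. Given $w \in \Delta \cap \mathcal{H}$, set $\beta := \flat_{\lambda,\omega}(w)$, which reduces to $i_w\omega$ because $\lambda(w) = 0$. Unpacking $\beta(v) = \Lambda(\alpha,\beta)$ via the definition of $\Lambda$ gives $-\omega(v,w) = \omega(\sharp_{\lambda,\omega}(\alpha), w)$, and expanding $\alpha = \flat_{\lambda,\omega}(\sharp_{\lambda,\omega}(\alpha))$ applied to $w \in \mathcal{H}$ shows that this last quantity equals $\alpha(w)$, which vanishes since $\alpha \in \Delta^0$ and $w \in \Delta$.

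To finish, I would compare dimensions. Since $\ker \sharp_\Lambda = \langle \lambda \rangle$, one has $\dim \Delta^{\perp_\Lambda} = 2n - \dim \Delta$ when $\Delta \subseteq \mathcal{H}$ (equivalently $\lambda \in \Delta^0$), and $\dim \Delta^{\perp_\Lambda} = 2n + 1 - \dim \Delta$ otherwise. The symplectic complement inside the $2n$-dimensional symplectic vector space $(\mathcal{H}, \omega|_\mathcal{H})$ has dimension $2n - \dim(\Delta \cap \mathcal{H})$; since $\mathcal{H}$ has codimension one in $TM$, the intersection has dimension $\dim \Delta$ in the first case and $\dim \Delta - 1$ in the second, and the two counts match.

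The main obstacle is the preliminary identification of $\im \sharp_\Lambda$ and $\ker \sharp_\Lambda$ from the SHS axioms, which unlike in the cosymplectic case requires extracting the Reeb identities from the mere non-degeneracy $\lambda \wedge \omega^n \neq 0$ rather than from a closedness hypothesis on $\lambda$. Once those are in hand the calculation and dimension count proceed essentially verbatim from the cosymplectic proof.
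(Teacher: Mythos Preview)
Your proposal is correct and follows exactly the approach the paper intends: the paper's proof is simply a pointer back to Proposition~\ref{CosymplecticOrthognal}, and you have faithfully transcribed that inclusion-plus-dimension-count argument to the SHS setting. Your extra care in first extracting $\im\sharp_\Lambda\subseteq\mathcal{H}$ and $\ker\sharp_\Lambda=\langle\lambda\rangle$ from the SHS axioms (rather than from closedness of $\lambda$) is a detail the paper leaves implicit via the Darboux-coordinate expressions, but it does not constitute a different route.
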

\begin{proof} The proof follows the same lines as that of \textcolor{red}{Proposition \ref{CosymplecticOrthognal}}
\end{proof}

Just like in previous sections, we say that a Lagrangian submanifold $L \hookrightarrow M$ is \textbf{horizontal} if $T_qL \subseteq \mathcal{H}_q\, \forall q \in L$ and say that it is \textbf{non-horizontal} if $T_qL \not \subseteq  \mathcal{H}_q \, \forall q \in L.$ We have the following characterization:\\

\begin{lemma} Let $L \hookrightarrow M$ be an isotropic (or coisotropic) submanifold. We have
\begin{itemize}
\item[i)] If $L$ is horizontal and $\dim L = n$, then $L$ is Lagrangian.
\item[ii)] If $L$ is non-horizontal and $\dim L = n+1$, then $L$ is Lagrangian.
\end{itemize}
\end{lemma}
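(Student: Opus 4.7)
The plan is to translate both statements into the symplectic geometry of the rank-$2n$ symplectic vector bundle $(\mathcal{H}, \omega|_{\mathcal{H}})$ via the identity
\[
T_qL^{\perp_\Lambda} \;=\; (T_qL \cap \mathcal{H}_q)^{\perp_{\omega|_{\mathcal{H}}}}
\]
established in the preceding proposition. Under this identity, checking that $L$ is Lagrangian amounts to proving, at every point $q\in L$, that $T_qL \cap \mathcal{H}_q$ is a Lagrangian subspace of the symplectic vector space $(\mathcal{H}_q, \omega_q|_{\mathcal{H}})$ of dimension $2n$.

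For case $(i)$, $L$ is horizontal, so $T_qL \cap \mathcal{H}_q = T_qL$, which has dimension $n = \tfrac{1}{2}\dim \mathcal{H}_q$. If $L$ is isotropic, then $T_qL \subseteq T_qL^{\perp_\Lambda} = T_qL^{\perp_{\omega|_{\mathcal{H}}}}$, i.e.\ $T_qL$ is isotropic in $(\mathcal{H}_q, \omega|_{\mathcal{H}})$, and half-dimensional isotropic subspaces of a symplectic vector space are Lagrangian. The coisotropic case is dual: $T_qL^{\perp_{\omega|_{\mathcal{H}}}} \subseteq T_qL$ together with $\dim T_qL = n$ again forces Lagrangian. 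In either case we conclude $T_qL^{\perp_\Lambda} = T_qL = T_qL \cap \mathcal{H}_q$, which is the Lagrangian condition.

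For case $(ii)$, $L$ is non-horizontal with $\dim L = n+1$, so $T_qL + \mathcal{H}_q = T_qM$ and a dimension count yields
\[
\dim (T_qL \cap \mathcal{H}_q) \;=\; (n+1) + 2n - (2n+1) \;=\; n.
\]
If $L$ is isotropic, then $T_qL \subseteq T_qL^{\perp_\Lambda} = (T_qL \cap \mathcal{H}_q)^{\perp_{\omega|_{\mathcal{H}}}}$; intersecting with $\mathcal{H}_q$ gives $T_qL \cap \mathcal{H}_q \subseteq (T_qL \cap \mathcal{H}_q)^{\perp_{\omega|_{\mathcal{H}}}}$, so $T_qL \cap \mathcal{H}_q$ is an $n$-dimensional isotropic, hence Lagrangian, subspace of $\mathcal{H}_q$. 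The coisotropic case uses $(T_qL)^{\perp_\Lambda} \subseteq T_qL$, which after intersecting with $\mathcal{H}_q$ yields $(T_qL \cap \mathcal{H}_q)^{\perp_{\omega|_{\mathcal{H}}}} \subseteq T_qL \cap \mathcal{H}_q$, again of the right dimension.

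The main point to watch is the bookkeeping in $(ii)$: one must use the identity $T_qL^{\perp_\Lambda} = (T_qL \cap \mathcal{H}_q)^{\perp_{\omega|_{\mathcal{H}}}}$ together with $T_qL^{\perp_\Lambda} \subseteq \mathcal{H}_q$ to ensure that the isotropy/coisotropy inherited by $T_qL$ actually transfers to its horizontal part $T_qL \cap \mathcal{H}_q$, rather than being swamped by the Reeb direction. Once this translation is made, both claims reduce to the standard fact that a half-dimensional isotropic (equivalently, coisotropic) subspace of a symplectic vector space is Lagrangian.
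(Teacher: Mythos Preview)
Your proof is correct and follows essentially the same approach as the paper, which merely says the argument is ``similar to the proof of Lemma \ref{CharacterizationLagrangianCosymplectic}, since we only need to check the condition at each tangent space.'' You have simply spelled out that pointwise dimension comparison in full, using the identity $T_qL^{\perp_\Lambda} = (T_qL \cap \mathcal{H}_q)^{\perp_{\omega|_{\mathcal{H}}}}$ to reduce everything to the standard fact about half-dimensional (co)isotropic subspaces of a symplectic vector space. One minor remark: in case $(ii)$ the isotropic hypothesis is actually vacuous, since $T_qL \subseteq T_qL^{\perp_\Lambda} \subseteq \mathcal{H}_q$ would force $L$ to be horizontal; only the coisotropic branch carries content there, though your argument is formally valid either way.
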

\begin{proof} The proof is similar to the proof of \textcolor{red}{Lemma \ref{CharacterizationLagrangianCosymplectic}}, since we only need to check the condition at each tangent space.
\end{proof}

Now, given a coisotropic submanifold $N \hookrightarrow M$ (that is, $(T_qN)^{\perp_\Lambda} \subseteq T_qN$), the distribution $(TN)^{\perp_\Lambda}$ is not necessarily integrable and we shall assume it in what follows:
\subsection[Hamiltonian vector fields as Lagrangian submanifolds]{Gradient and Hamiltonian vector fields as Lagrangian submanifolds}
We can define a symplectic structure on $TM$ taking $$\Omega_0 := \flat_{\lambda, \omega} ^* \Omega_M,$$ where $\Omega_M$ is the canonical symplectic form on $T^*M$. In Darboux coordinates it has the expression:
\begin{align*}
  \Omega_0 =& dq^j \wedge d\left (\dot{q}^ia_ia_j + \dot{p}_ib^ia_j+ \dot{z}a_j - \dot{p}_j \right)  + \\
   & dp_j \wedge d\left( \dot{q}^ia_ib^j + \dot{p_i}b^ib^j + \dot{z}b^j + \dot{q}^j \right)+ \\
   &  dz \wedge d\left( \dot{q}^ia_i + \dot{p}_ib^i + \dot{z}\right).
\end{align*}

\begin{Def}[Gradient vector field] Given a Hamiltonian $H \in \mathcal{C}^ \infty(M),$ define the \textbf{gradient vector field} of $H$ as $$\grad H:= \sharp_{\lambda, \omega}(dH).$$
\end{Def}
In Darboux coordinates, the gradient vector field is written

$$\grad H = \left ( \partder{H}{p_i} - b^i \partder{H}{z} \right) \partder{}{q^i} + \left( - \partder{H}{q^i} + a_i \partder{H}{z}\right) \partder{}{p_i} + \left( b^i\partder{H}{q^i} - a_i\partder{H}{p_i} + \partder{H}{z}\right) \partder{}{z}.$$

It is easily checked that $X: M \rightarrow TM$ is locally a gradient vector field if and only if $X(M)$ is a Lagrangian submanifold of $(TM, \Omega_0)$. Indeed, we have the equality $$X^*\Omega_0 = - d\flat_{\lambda, \omega}(X).$$

When $\Lambda$ comes from a Jacobi bracket on $M$, that is, when $$d\lambda = f \omega, \,\,\, [f \mathcal{R}, \Lambda] = 0,$$ for some function $f$ on $M$, we have the Hamiltonian vector field of the Jacobi structure $(\Lambda, f \mathcal{R})$: 
$$X_H = \sharp_\Lambda(dH ) + f H \mathcal{R} = -\grad H + (\mathcal{R}(H) + fH )\mathcal{R} .$$ In Darboux coordinates it has the expression:
$$X_H = \left ( - \partder{H}{p_i}+ b^i \partder{H}{q^i}\right) \partder{}{q^i} + \left( \partder{H}{q^i} - a_i \partder{H}{z}\right) \partder{}{p_i} + \left( a_i \partder{H}{p_i} - b^i \partder{H}{q^i} + f H\right) \partder{}
{z}.$$

Let us interpret the Hamiltonian vector field as a Lagrangian submanifold of $TM$, with some symplectic form. First, observe that
\begin{align*}
    X_H ^* \Omega_0 = - d(\flat_{\lambda, \omega}(dH)) = - d\left(  \mathcal{R}(H) \lambda + fH \lambda\right).
\end{align*}
Therefore, defining the symplectic form $$\Omega_H := \Omega_0 + d(\mathcal{R}(H)  \lambda + fH \lambda)^v,$$ we have that $X_H$ defines a Lagrangian submanifold of $(TM, \Omega_H).$

 \subsection{Vertical coisotropic reduction}


\begin{theorem}[Vertical coisotropic reduction in stable Hamiltonian structures] Let $i: N \hookrightarrow M$ be a vertical coisotropic submanifold such that $(TN)^{\perp_\Lambda}$ defines an integrable distribution. Let $\mathcal{F}$ be the set of leaves and suppose that $N/\mathcal{F}$ admits a manifold structure such that the canonical projection $\pi: N \rightarrow N/\mathcal{F}$ defines a submersion. If $i^* d\lambda = 0$ in $TN \cap \mathcal{H}$, then $N/\mathcal{F}$ admits an unique stable Hamiltonian system structure $(\omega_N, \lambda_N)$ such that $\pi^*\omega_N = i^*\omega$ and $\pi^*\lambda_N = i^*\lambda$. The following diagram summarizes the situation:
 \[
\begin{tikzcd} N \arrow[r, "i"] \arrow[d, "\pi"] & M \\ N / \mathcal{F} \end{tikzcd} \]
\end{theorem}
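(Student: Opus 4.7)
The plan is to mirror the proof of \textcolor{red}{Theorem \ref{VerticalCosymplecticReduction}}: I define $\omega_N$ and $\lambda_N$ on $N/\mathcal{F}$ pointwise by
$$
\omega_N([u],[v]) := \omega_0(u,v), \qquad \lambda_N([u]) := \lambda_0(u),
$$
where $\omega_0 := i^*\omega$, $\lambda_0 := i^*\lambda$ and $[u] := T\pi(q) \cdot u$, and then verify in order: well-definedness (both independence of the representative vector and of the point in the leaf), smoothness plus closedness of $\omega_N$, and the two SHS axioms on $N/\mathcal{F}$. Uniqueness is immediate from the imposed pullback relations since $\pi$ is a submersion.

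For well-definedness at a fixed point, I take $v \in (T_qN)^{\perp_\Lambda}$ and invoke the analog of \textcolor{red}{Proposition \ref{CosymplecticOrthognal}} proved earlier in this section, which gives $(T_qN)^{\perp_\Lambda} = (T_qN \cap \mathcal{H}_q)^{\perp_{\omega|_\mathcal{H}}}$; hence $v \in \mathcal{H}_q$ and $\lambda_0(v) = 0$. For $\omega_0$, I use the verticality hypothesis $\mathcal{R}(q) \in T_qN$ to decompose any $u \in T_qN$ as $u = u_\mathcal{H} + \lambda(u)\mathcal{R}$ with $u_\mathcal{H} \in T_qN \cap \mathcal{H}_q$; then $\omega(v,u) = \omega(v, u_\mathcal{H}) + \lambda(u)\omega(v,\mathcal{R}) = 0$, since $\mathcal{R} \in \ker\omega$ and $v$ is $\omega|_\mathcal{H}$-orthogonal to $u_\mathcal{H}$. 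For independence along a leaf, I must show $\mathcal{L}_X \omega_0 = 0$ and $\mathcal{L}_X \lambda_0 = 0$ for every vector field $X$ on $N$ tangent to $(TN)^{\perp_\Lambda}$. Cartan's formula plus $d\omega_0 = 0$ gives the first immediately; the second reduces to showing $d\lambda(X,Y) = 0$ for every $Y$ tangent to $N$. Decomposing $Y = Y_\mathcal{H} + \lambda(Y)\mathcal{R}$, the term $d\lambda(X, Y_\mathcal{H})$ vanishes by the hypothesis $i^*d\lambda|_{TN \cap \mathcal{H}} = 0$, and the term $\lambda(Y)d\lambda(X,\mathcal{R})$ vanishes because the SHS axiom $\ker \omega \subseteq \ker d\lambda$ forces $i_\mathcal{R} d\lambda = 0$.

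To verify the SHS conditions on the quotient, I first note that closedness of $\omega_N$ is automatic since $\pi^*d\omega_N = d(i^*\omega) = 0$ and $\pi$ is a submersion. Setting $k := \dim N$, the rank formula $\operatorname{rank}(TN)^{\perp_\Lambda} = \dim M - \dim N$ yields $\dim N/\mathcal{F} = 2(k-n-1) + 1$. For the volume condition $\lambda_N \wedge \omega_N^{k-n-1} \neq 0$, I perform linear symplectic reduction of the coisotropic subspace $T_qN \cap \mathcal{H}_q$ inside $(\mathcal{H}_q, \omega|_\mathcal{H})$ to obtain $2(k-n-1)$ horizontal vectors $u_1, \ldots, u_{2(k-n-1)}$ with $\omega_0^{k-n-1}(u_1,\ldots,u_{2(k-n-1)}) \neq 0$, and then $(\lambda_0 \wedge \omega_0^{k-n-1})(\mathcal{R}(q), u_1, \ldots, u_{2(k-n-1)}) \neq 0$ because $\lambda(\mathcal{R}) = 1$. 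For $\ker \omega_N \subseteq \ker d\lambda_N$, if $[v] \in \ker \omega_N$ then the same decomposition $v = v_\mathcal{H} + \lambda(v)\mathcal{R}$ together with $\omega_N([v],[w]) = 0$ for all $[w]$ forces $v_\mathcal{H} \in (T_qN)^{\perp_\Lambda}$, so $[v] = \lambda(v)[\mathcal{R}]$ and $\ker \omega_N = \langle [\mathcal{R}] \rangle$; then $d\lambda_N([\mathcal{R}], [w]) = d\lambda(\mathcal{R}, w) = 0$, once again by $\mathcal{R} \in \ker d\lambda$.

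The main obstacle is the Lie-derivative computation for $\lambda$: unlike the cosymplectic case, $\lambda$ is not closed, so we cannot simply invoke $d\lambda_0 = 0$. The combination of the extra assumption $i^*d\lambda|_{TN \cap \mathcal{H}} = 0$ with the SHS axiom $\ker \omega \subseteq \ker d\lambda$ is precisely what is needed to push through both the independence-of-point step and the final SHS-axiom check on the quotient.
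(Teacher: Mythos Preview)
Your proof is correct and follows essentially the same route as the paper: define $\omega_N,\lambda_N$ by the pullback relations, invoke the cosymplectic template of \textcolor{red}{Theorem \ref{VerticalCosymplecticReduction}} for well-definedness and the volume condition, and isolate the two genuinely new points---that the hypothesis $i^*d\lambda|_{TN\cap\mathcal{H}}=0$ together with $\mathcal{R}\in\ker d\lambda$ is what makes $\mathcal{L}_X\lambda_0=0$ go through, and that $\ker\omega_N=\langle[\mathcal{R}]\rangle$ combined with $i_\mathcal{R}d\lambda=0$ yields the SHS axiom $\ker\omega_N\subseteq\ker d\lambda_N$. The paper's own proof is a two-line sketch pointing to exactly these two steps; your version simply spells them out in full.
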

\begin{proof} The proof is similar to the the proof of \textcolor{red}{ Theorem \ref{VerticalCosymplecticReduction}}. Asking $i^*d\lambda = 0$ is necessary to guarantee the well-definedness of $\lambda_N$ in the quotient using $$\mathcal{L}_X \lambda_0 = i_Xd\lambda-0 + di_X\lambda_0 = 0,$$ where $\lambda_0 = i^*\lambda$. It would only remain to check that $$\ker \omega_N \subseteq \ker d\lambda_N.$$ Indeed, since $\ker \omega_N = \langle \mathcal{\mathcal{R}_N} \rangle$, and $\mathcal{R}_N = \pi_* \mathcal{R}$, it follows from $$\pi^*(i_{\mathcal{R}_N}d\lambda_N) = i_\mathcal{R} d\lambda =  0.$$
\end{proof}

\subsubsection{Projection of Lagrangian submanifolds}
We have the result:\\
\begin{prop}[Projection of Lagrangian submanifolds] Let $i: L \hookrightarrow M$ be a Lagrangian submanifold. If $L$ and $N$ have clean intersection and $\pi(L \cap N)$ is a submanifold in $N /\mathcal{F}$, then it is Lagrangian.
\end{prop}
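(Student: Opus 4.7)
The plan is to adapt the argument used for the cosymplectic setting almost verbatim, since the essential algebraic input, namely the identity $\Delta^{\perp_\Lambda}=(\Delta\cap\mathcal{H})^{\perp_{\omega|_\mathcal{H}}}$, holds in the stable Hamiltonian context and the characterization of Lagrangian submanifolds via dimension plus (co)isotropy is the same as in \textcolor{red}{Lemma \ref{CharacterizationLagrangianCosymplectic}}. First I would verify that $L_N:=\pi(L\cap N)$ is isotropic. For $[u],[v]\in T_{[q]}L_N$ with representatives $u,v\in T_qL\cap T_qN$, the defining equation $\pi^*\omega_N=i^*\omega$ yields
\begin{equation*}
\omega_N([u],[v])=\omega(u,v)=0,
\end{equation*}
since $L$ is Lagrangian, so $T_{[q]}L_N\subseteq T_{[q]}L_N^{\perp_{\omega_N|_{\mathcal{H}_N}}}\subseteq T_{[q]}L_N^{\perp_{\Lambda_N}}$.

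The harder part is the dimension count, which forces a split into cases according to the position of $L$ (and of $L\cap N$) relative to the horizontal distribution $\mathcal{H}$, exactly as in \textcolor{red}{Propositions \ref{ProjectionHorizontalLagrangianCosymplectic}} and \textcolor{red}{\ref{ProjectionNonHorizontalLagrangian}}. Setting $\dim N=k+1$ (so $\dim N/\mathcal{F}=2(k-n)+1$), I would use the kernel–range formula
\begin{equation*}
\dim L_N=\dim(L\cap N)-\dim\bigl(T_qL\cap (T_qN)^{\perp_\Lambda}\bigr),
\end{equation*}
together with the orthogonal identity
\begin{equation*}
\bigl(T_qL\cap (T_qN)^{\perp_\Lambda}\bigr)^{\perp_\Lambda}=T_qL\cap\mathcal{H}_q+\bigl((T_qN)^{\perp_\Lambda}\bigr)^{\perp_\Lambda},
\end{equation*}
and the fact that $((T_qN)^{\perp_\Lambda})^{\perp_\Lambda}=T_qN\cap\mathcal{H}_q$ has codimension $1$ in $T_qN$ because $N$ is vertical. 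A Grassmann-formula bookkeeping then yields $\dim L_N=k-n$ when $L\cap N$ is horizontal (in which case $L_N$ is horizontal and of the right dimension to be Lagrangian in the reduced SHS), and $\dim L_N=k-n+1$ when $L\cap N$ is non-horizontal (the non-horizontal Lagrangian case in $N/\mathcal{F}$).

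The main obstacle will be handling the two sub-cases for non-horizontal $L$ and keeping track of the codimension $1$ discrepancies that appear from the kernel of $\sharp_\Lambda=\langle\lambda\rangle$; these are precisely the bookkeeping steps carried out in the cosymplectic proofs, so the argument will transfer once we observe that verticality of $N$ implies $\dim T_qN\cap\mathcal{H}_q=\dim T_qN-1$ and that the horizontal part of a vector of $T_qN$ remains in $T_qN$, which is what makes the forms $\omega_N$ and $\lambda_N$ well defined on the quotient. Since every ingredient is already established, I expect no new phenomena, only a careful case split concluding in all situations that $\dim L_N=\tfrac12(\dim N/\mathcal{F}-1)$ or $\tfrac12(\dim N/\mathcal{F}-1)+1$, which combined with isotropy gives the Lagrangian property via the SHS analogue of \textcolor{red}{Lemma \ref{CharacterizationLagrangianCosymplectic}}.
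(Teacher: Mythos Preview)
Your proposal is correct and follows essentially the same approach as the paper: the paper's proof simply states that the argument is identical to \textcolor{red}{Propositions \ref{ProjectionHorizontalLagrangianCosymplectic}} and \textcolor{red}{\ref{ProjectionNonHorizontalLagrangian}} because it reduces to a tangent-space computation, and you have (correctly) spelled out precisely that reduction, relying on the same identity $\Delta^{\perp_\Lambda}=(\Delta\cap\mathcal{H})^{\perp_{\omega|_\mathcal{H}}}$ and the same case split. Your write-up is in fact more detailed than the paper's one-line reference.
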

\begin{proof} The proof is similar to the the proof of \textcolor{red}{Proposition \ref{ProjectionHorizontalLagrangianCosymplectic}} and \textcolor{red}{Proposition \ref{ProjectionNonHorizontalLagrangian}}, since the proof reduces to the study of each tangent space.
\end{proof}

\subsection{Horizontal coisotropic reduction}

\begin{theorem}[Horizontal coisotropic reduction in stable Hamiltonian structures] Let $i: N \rightarrow M $ be a coisotropic horizontal submanifold such that $(TN)^{\perp_\Lambda}$ defines an integrable distribution. Let $\mathcal{F}$ be the set of leaves of the foliation and suppose that $N/\mathcal{F}$ admits a manifold structure such that the canonical projection $\pi: N \rightarrow N/\mathcal{F}$ defines a submersion. Then $N/\mathcal{F}$ admits and unique symplectic structure $\omega_N$ such that $\pi^*\omega_N = i^*\omega$. The following diagram summarizes the situation:
 \[
\begin{tikzcd} N \arrow[r, "i"] \arrow[d, "\pi"] & M \\ N / \mathcal{F} \end{tikzcd} \]
\end{theorem}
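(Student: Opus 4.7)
The plan is to mimic the classical symplectic coisotropic reduction (Theorem \ref{SymplecticReduction}), exploiting the fact that horizontality of $N$ reduces the situation to symplectic reduction inside the symplectic vector bundle $(\mathcal{H}, \omega|_\mathcal{H})$. Since $T_qN \subseteq \mathcal{H}_q$ for every $q \in N$, the preceding proposition yields $(T_qN)^{\perp_\Lambda} = (T_qN \cap \mathcal{H}_q)^{\perp_{\omega|_\mathcal{H}}} = (T_qN)^{\perp_{\omega|_\mathcal{H}}}$, so the coisotropic condition $(T_qN)^{\perp_\Lambda} \subseteq T_qN$ means exactly that $T_qN$ is a coisotropic subspace of the symplectic space $(\mathcal{H}_q, \omega|_\mathcal{H})$. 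In particular $(TN)^{\perp_\Lambda}$ is a subbundle of $\mathcal{H}|_N$ of rank $2n - \dim N$, which is the setup needed for fibrewise linear symplectic reduction.

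Writing $\omega_0 := i^*\omega$, uniqueness is immediate since $\pi$ is a submersion: the relation $\pi^*\omega_N = \omega_0$ forces
\[
(\omega_N)_{[q]}([u],[v]) := \omega_0(u,v), \qquad [u] = T_q\pi \cdot u.
\]
I would then verify well-definedness in two steps. First, independence of the chosen representative: for $w \in (T_qN)^{\perp_\Lambda}$ and $v \in T_qN$, both are horizontal, hence $\omega(w,v) = (\omega|_\mathcal{H})(w,v) = 0$ by the symplectic-orthogonality just noted. Second, independence of the point within a leaf: for any vector field $X$ on $N$ tangent to $(TN)^{\perp_\Lambda}$ one has $i_X \omega_0 = 0$ (again because $X$ and all test vectors live in $\mathcal{H}$), and $d\omega_0 = 0$ because $\omega$ is closed, so Cartan's formula gives $\mathcal{L}_X \omega_0 = 0$; two points on the same leaf are joined by a finite concatenation of flows of such fields.

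Closedness of $\omega_N$ follows from $\pi^*(d\omega_N) = d(\pi^*\omega_N) = d\omega_0 = 0$ together with the fact that $\pi$ is a submersion. For non-degeneracy, I would argue pointwise using linear symplectic reduction in $(\mathcal{H}_q, \omega|_\mathcal{H})$: the quotient $T_qN / (T_qN)^{\perp_{\omega|_\mathcal{H}}}$ inherits a non-degenerate $2$-form from $\omega|_\mathcal{H}$, and this is exactly the form $(\omega_N)_{[q]}$ via the identification $T_{[q]}(N/\mathcal{F}) \cong T_qN / (T_qN)^{\perp_\Lambda}$.

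The main obstacle, in contrast with the cosymplectic horizontal case, is that here $\mathcal{H}$ need not be integrable, so I cannot simply reduce the whole problem to symplectic reduction inside a fixed symplectic leaf. However, the argument above only uses the fibrewise symplectic structure on $\mathcal{H}$ together with the closedness of $\omega$, so integrability of $\mathcal{H}$ is not needed. The remaining mild point is the tacit assumption that $(TN)^{\perp_\Lambda}$ is integrable (this is part of the hypotheses of the theorem, since in the SHS setting involutivity of $(TN)^{\perp_\Lambda}$ is not automatic as it was in the symplectic and cosymplectic cases), and it is exactly this hypothesis that licenses the use of the Frobenius theorem to produce the foliation $\mathcal{F}$.
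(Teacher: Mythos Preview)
Your proof is correct and follows exactly the approach the paper indicates: the paper's own proof simply reads ``The proof is similar to the proof of Theorem~\ref{SymplecticReduction}'', and what you have written is precisely that symplectic-reduction argument carried out in detail, using the identification $(T_qN)^{\perp_\Lambda} = (T_qN)^{\perp_{\omega|_\mathcal{H}}}$ from the preceding proposition. Your remark that integrability of $\mathcal{H}$ is unnecessary (in contrast with the cosymplectic horizontal case) is a nice clarification that the paper leaves implicit.
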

\begin{proof}  The proof is similar to the the proof of \textcolor{red}{Theorem \ref{SymplecticReduction}}.
\end{proof}

\subsubsection{Projection of Lagrangian submanifolds}
\begin{prop}[Projection of Lagrangian submanifolds] Let $L \hookrightarrow M$ be a Lagrangian submanifold. If $L$ and $N$ have clean intersection and $\pi(L \cap N)$ is a submanifold in $N/\mathcal{L}$, then it is Lagrangian.
\end{prop}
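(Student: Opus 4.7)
The plan is to reduce the problem pointwise to the linear symplectic reduction performed inside the symplectic vector space $(\mathcal{H}_q, \omega|_{\mathcal{H}})$, exactly as was done for horizontal coisotropic reduction in the cosymplectic case. Fix $q \in L \cap N$; we must show that $T_{[q]}L_N = d_q\pi \cdot (T_qL \cap T_qN)$ is a Lagrangian subspace of $T_{[q]}(N/\mathcal{F})$ equipped with the reduced form $\omega_N$.

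First I would record the ambient symplectic structure in which the reduction takes place. Since $N$ is horizontal, $T_qN \subseteq \mathcal{H}_q$, and the preceding proposition in this section (the analogue of Proposition~\ref{CosymplecticOrthognal}) yields
\[
(T_qN)^{\perp_\Lambda} \;=\; (T_qN \cap \mathcal{H}_q)^{\perp_{\omega|_\mathcal{H}}} \;=\; (T_qN)^{\perp_{\omega|_\mathcal{H}}} \;\subseteq\; T_qN,
\]
so that $T_qN$ is a coisotropic subspace of the symplectic vector space $(\mathcal{H}_q, \omega|_\mathcal{H})$. Moreover, $\pi^*\omega_N = i^*\omega$ and $T_qN \subseteq \mathcal{H}_q$ together imply that $\omega_N$ is exactly the symplectic form obtained by linear symplectic reduction of $(\mathcal{H}_q, \omega|_\mathcal{H})$ by the coisotropic $T_qN$.

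Next I would identify the tangent space of the candidate Lagrangian. Since $L$ is a Lagrangian submanifold of the SHS $(M, \omega, \lambda)$, the characterization stated in this section shows that $T_qL \cap \mathcal{H}_q$ is a Lagrangian subspace of $(\mathcal{H}_q, \omega|_\mathcal{H})$ (in both the horizontal and non-horizontal cases the intersection with $\mathcal{H}_q$ has dimension $n$). Using that $T_qN \subseteq \mathcal{H}_q$, the clean-intersection hypothesis gives
\[
T_qL \cap T_qN \;=\; (T_qL \cap \mathcal{H}_q) \cap T_qN,
\]
and this is precisely the subspace to which linear symplectic reduction of a Lagrangian by a coisotropic (Proposition~\ref{LagrangianProjectionSymplectic}, applied to $(\mathcal{H}_q, \omega|_\mathcal{H})$) assigns its Lagrangian image in $T_qN/(T_qN)^{\perp_\Lambda}$. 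Since $\ker d_q\pi = (T_qN)^{\perp_\Lambda}$, this image coincides with $T_{[q]}L_N$, which is therefore Lagrangian.

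The main obstacle is nothing more than bookkeeping, since essentially everything is a transcription of the symplectic/cosymplectic horizontal argument: one has to verify that the SHS-Lagrangian $L$ really does cut $\mathcal{H}_q$ in a half-dimensional Lagrangian subspace of $(\mathcal{H}_q, \omega|_\mathcal{H})$, and that the reduced form $\omega_N$ is induced by $\omega|_\mathcal{H}$ rather than by $\omega$ as a whole. Both points follow from $T_qN \subseteq \mathcal{H}_q$ together with the identity $(T_qN)^{\perp_\Lambda} = (T_qN)^{\perp_{\omega|_\mathcal{H}}}$, so no new calculation beyond the previous cases is required.
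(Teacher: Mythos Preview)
Your proposal is correct and follows essentially the same approach as the paper: the paper's proof simply states that the argument is similar to Proposition~\ref{LagrangianProjectionSymplectic} since the claim need only be checked in each tangent space, and your sketch carries out precisely this pointwise reduction to linear symplectic reduction in $(\mathcal{H}_q,\omega|_\mathcal{H})$, in the same spirit as the cosymplectic horizontal case. Your write-up is in fact more detailed than the paper's one-line proof, but the underlying strategy is identical.
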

\begin{proof} The proof is similar to the the proof of \textcolor{red}{Proposition \ref{LagrangianProjectionSymplectic}} since we only need to check it in every tangent space.
\end{proof}

\section{Conclusions}
\setsectiontitle{CONCLUSIONS}
In this paper we have reviewed the concept of coisotropic and Lagrangian (and Legendrian) submanifolds in different geometric settings. We have shown the connection of these geometric constructions with the different dynamics that usually appear in mechanics. All these different geometries (symplectic, cosymplectic, contact, cocontact) can be classified within Jacobi geometry (in some case, Poisson, a particular case of Jacobi structure). This approach allows in a simple way to see these particular situations from a more general point of view. Sometimes it is important to stand at a certain altitude in order to realise that these particular situations respond to the same geometrical pattern.

In each case we have introduced the notions of coisotropic and Lagrangian submanifold and studied in detail the corresponding coisotropic reduction theorems. The interpretations of the different types of vector fields in the different types of geometry as Lagrangian or Legendrian submanifolds are summarized in \textcolor{red}{Table \ref{Table1}}. Also, the results on coisotropic reduction are summarized in \textcolor{red}{Table \ref{Table2}}

\begin{table}[ht]
\centering
\caption{\small \textbf{Interpretation of vector fields as Lagrangian or Legendrian submanifolds}}
\rule{0pt}{4ex}  
\resizebox{\textwidth}{!}{%
  \begin{tabular}{llll}
  \label{Table1}
    Geometry & Vector field & Type of submanifold & Ambient manifold\\
    [+0.3ex]\\
    \hline\\
    \textbf{Symplectic} & Hamiltonian & Lagrangian & $(TM, \omega_0)$, symplectic\\
      $(M, \omega)$& $X_H = \sharp_\omega(dH)$ && $\omega_0 = \flat_\omega^* \omega_M$ \\
     [+0.3ex]\\
     \hline\\

     \textbf{Cosymplectic} & Gradient & Lagrangian & $(TM, \Omega_0)$, symplectic\\
     $(M, \Omega, \theta)$&  $\grad H = \sharp_{\theta, \Omega}(dH)$ && $\Omega_0 = \flat_{\theta, \Omega}^* \Omega_M$\\
     [+0.2ex]\\
     & Hamiltonian & Lagrangian & $(TM, \Omega_H)$, symplectic\\ & $X_H = \grad H - \mathcal{R}(H) \mathcal{R}$ && $\Omega_H = \Omega_0 + (\d(\mathcal{R}(H)) \wedge \theta)^v$\\
     [+0.2ex]\\
     && Lagrangian & $(TM \times \mathbb{R},\Omega_H,ds)$, cosymplectic\\
     [+0.2ex]\\
     & Evolution & Lagrangian & $(TM, \Omega_H)$, symplectic\\ & $\mathcal{E}_H = \grad H + \mathcal{R}$\\
     [+0.3 ex]\\
     \hline\\
     \textbf{Contact} & Gradient & Lagrangian & $(TM, \Omega_0)$, symplectic\\ $(M, \eta)$& $\grad H = \sharp_{\eta}(dH)$ && $\Omega_0 = \flat_\eta ^*\Omega_M$ \\
     [+0.2ex]\\
     & Hamiltonian & Lagrangian & $(TM, \Omega_H)$, symplectic \\ &$X_H = \grad H - (\mathcal{R}(H) + H)\mathcal{R}$ && $\Omega_H = \Omega_0 - \d(\mathcal{R}(H) \eta + H \eta) ^v$ \\ 
     [+0.2ex]\\
     && Legendrian & $(TM \times \mathbb{R}, \hat{\eta})$, contact\\
     &&& $\hat{\eta} = \eta^c + t \eta^v$\\
     [+0.2ex]\\
     & Evolution & Lagrangian & $(TM\times \mathbb{R}, \widetilde \Omega_H)$, symplectic\\ & $\mathcal{E}_H = X_H + H \mathcal{R}$  && $\widetilde \Omega_H = \Omega_0 - \d(\mathcal{R}(H) \eta)^v$\\
     [+0.3ex]\\
     \hline\\
     \textbf{Cocontact} & Gradient & Lagrangian & $(TM, \Omega_0)$, symplectic\\
     $(M, \theta, \eta)$& $\grad H = \sharp_{\theta, \eta}(dH)$ && $\Omega_0 = \flat_{\eta, \theta}^* \Omega_M$\\
     [+0.2ex]\\
     &Hamiltonian & Lagrangian & $(TM, \Omega_H)$, symplectic\\
     & $X_H = \grad(H) - (\mathcal{R}_z(H) + H)\mathcal{R}_z$ && $\Omega_H = \Omega_0 - \d(\mathcal{R}_z(H) \eta + H\eta)^v$\\
     & $\quad \quad \quad+(1 - \mathcal{R}_t(H))\mathcal{R}_t$ && $\quad \quad \quad + d(\theta - \mathcal{R}_t(H)\theta)^v$\\
     [+0.2ex]\\
     && Legendrian & $(TM \times \mathbb{R} \times \mathbb{R}, \widetilde \theta, \widetilde \eta)$, cocontact\\
     &&& $\widetilde \eta = \eta^c + s \eta ^v + \theta^c + e \theta ^v$\\
     &&& $\widetilde \theta = \theta^c$\\
     [+0.3ex]\\
     \hline\\
     \textbf{SHS} & Gradient & Lagrangian & $(TM, \Omega_0)$, symplectic\\
     $(M , \omega, \lambda)$ & $\grad H = \sharp_{\lambda, \omega}({dH})$ & & $\Omega_0 = \flat_{\lambda, \omega} ^*\Omega_M$\\
     [+0.2ex]\\
     If $d\lambda = f \omega,$ & Hamiltonian & Lagrangian & $(TM, \Omega_H)$, symplectic\\
      $\sharp_{\lambda, \Omega}(df) \in \mathcal{V} $& $X_H = -\grad H + (\mathcal{R}(H) + f H) \mathcal{R}$ && $\Omega_H = \Omega_0 + \d(\mathcal{R}(H)\lambda + fH\lambda)^v$\\
  \end{tabular}}
\end{table}

\begin{table}[ht]
\centering
\caption{\small \textbf{Summary of results on coisotropic reduction}}
\rule{0pt}{4ex}  
\resizebox{\textwidth}{!}{%
  \begin{tabular}{llll}
  \label{Table2}
    Geometry & Coisotropic submanifold & Resulting manifold & Projection of Lagrangian\\ & $N \hookrightarrow M$&& and Legendrian submanifolds\\
    [+0.3 ex]\\
    \hline\\
    \textbf{Symplectic} & Arbitrary & $(N/\mathcal{F}, \omega_N)$, symplectic  & $L$ Lagrangian $\implies$ $L_N$ Lagrangian\\
    [+0.2ex]\\
    \hline\\
    \textbf{Cosymplectic} & Vertical & $(N/\mathcal{F}, \theta_N, \Omega_N)$, cosymplectic & $L$ Lagrangian $\implies$ $L_N$ Lagrangian\\
    [+0.2ex]\\
    & Horizontal & $(N, \Omega_N)$, symplectic & $L$ Lagrangian $\implies$ $L_N$ Lagrangian\\
    [+0.2ex]\\
    & Arbitrary & Foliation consisting of symplectic\\
    && manifolds of $N/\mathcal{F}$\\
    [+0.3ex]\\
    \hline\\
    \textbf{Contact} & Vertical  & $(N/\mathcal{F}, \eta_N)$, contact &$L$ Legendrian $\implies$ $L_N$ Legendrian\\
    [+0.2ex]\\
    & Horizontal & $\dim N/\mathcal{F} = 0$ & \\
    [+0.3ex]\\
    \hline\\
    \textbf{Cocontact} & $tz$-vertical & $(N/\mathcal{F}, \theta_N, \eta_N)$, cocontact & $L$ Legendrian $\implies$ $L_N$ Legendrian\\
    [+0.2ex]\\
    & $t$-vertical, $z$-horizontal & $\dim N/\mathcal{F} = 1$, $\theta_N \neq 0$\\
    [+0.2ex]\\
    & $z$-vertical, $t$-horizontal & $(N/\mathcal{F}, \eta_N),$ contact& $L$ Legendrian $\implies$ $L_N$ Legendrian\\
    [+0.2ex]\\
    & $tz$-horizontal & $\dim N/\mathcal{F} = 0$ \\
    [+0.3ex]\\
    \hline\\
    \textbf{SHS} & Vertical & $(N/\mathcal{F}, \omega_N, \lambda_N)$, stable Hamiltonian & $L$ Lagrangian $\implies$ $L_N$ Lagrangian \\
    \textbf{} & \\
    \textbf{} & Horizontal & $(N/\mathcal{F}, \omega_N)$, symplectic & $L$ Lagrangian $\implies$ $L_N$ Lagrangian
  \end{tabular}}
\end{table}

\section{Acknowledgements}
We acknowledge the financial support of Grant PID2019-106715GBC21, the Severo Ochoa Programme for Centres of Excellence in R\&D (CEX2019-000904-S), and JAE Intro Programme 2022 (Becas de Introducción a la Investigación para estudiantes universitarios). Finally, we also thank the referees for their corrections and suggestions.

\phantomsection
\addcontentsline{toc}{section}{References}
\pagestyle{empty}
\bibliographystyle{plain} 
\bibliography{refs} 

\end{document}